\numberwithin{equation}{section}
\theoremstyle{plain}
	\newtheorem{thm}[equation]{Theorem}
	\newtheorem{prop}[equation]{Proposition}
	\newtheorem{cor}[equation]{Corollary}
	\newtheorem{lem/defn}[equation]{Lemma/Definition}
\theoremstyle{definition}
	\newtheorem{defn}[equation]{Definition}
	\newtheorem{ex}[equation]{Example}
	\newtheorem{const}[equation]{Construction}
\theoremstyle{remark}
	\newtheorem{rem}[equation]{Remark}
\def\nc{\newcommand}
\def\on{\operatorname}
\nc{\edit}[1]{\marginpar{\footnotesize{#1}}}
\nc{\C}{\mathbb{C}}
\nc{\Z}{\mathbb{Z}}
\nc{\PP}{\mathbb{P}}
\nc{\R}{\mathbb{R}}
\nc{\F}{\mathbb{F}}
\nc{\AAA}{\mathbb{A}}
\nc{\LL}{\mathbb{L}}
\nc{\OO}{\mathcal{O}}
\nc{\X}{\EuScript{X}}
\nc{\sZ}{\EuScript{Z}}
\nc{\id}{{\on{id}}}
\nc\Hom{{\on{Hom}}}
\nc\cone{{\on{cone}}}
\nc{\Rep}{{\on{Rep}}}
\nc\Ob{{\on{Ob}}}
\nc\Spec{{\on{Spec}}}
\nc\Mod{{\on{Mod}}}
\nc\coMod{{\on{coMod}}}
\nc\Perf{{\on{Perf}}}
\nc\End{{\on{End}}}
\nc{\into}{\hookrightarrow}
\nc{\tr}{\on{tr}}
\nc{\ev}{\on{ev}}
\nc{\im}{\on{im}}
\nc{\cogroupoidzerosphere}{S^{0, \bullet}_{fil}}
\nc{\Mot}{\on{Mot}}
\nc{\filstack}{\AAA^1 / \GG_m}
\nc{\pt}{\on{pt}}
\nc{\formalgroup}{\widehat{\GG}}
\nc{\coker}{\on{coker}}
\nc{\rk}{\on{rank}}
\nc{\TOP}{\on{Top}_{\mathbb{C}}^{s}}
\nc{\gr}{\on{gr}}
\nc{\Catperf}{\text{Cat}^{\text{perf}}}
\nc{\Sym}{\on{Sym}}
\nc{\xra}{\xrightarrow}
\nc{\lra}{\xleftarrow}
\nc{\Bet}{\mathbf{Betti}_{X}}
\nc{\spectralift}{\mathsf{Fix}^{un}_{\formalgroup}}
\nc{\Fix}{\mathsf{Fix}}
\nc{\codim}{\on{codim}}
\nc{\Fred}{\on{Fred}}
\nc{\colim}{\on{colim}}
\nc{\KK}{{\bf K}}
\nc{\Sp}{\on{Sp}}
\nc{\onto}{\twoheadrightarrow}
\nc{\A}{\mathbb{A}}
\nc{\Aff}{\on{Aff}}
\nc{\SH}{\on{SH}}
\nc{\QCoh}{\on{QCoh}}
\nc{\filteredloopspaceX}{\mathcal{L}_{fil}(X)}
\nc{\loopspaceX}{\mathcal{L}(X)}
\nc{\Alg}{\on{Alg}}
\nc{\Br}{\on{Br}}
\nc{\loopsX}{\mathcal{L}X}
\nc{\shiftedtangent}{\mathcal{T}_X[-1]}
\nc{\ta}{\widetilde{\a}}
\nc{\Shv}{\on{Shv}}
\nc{\GG}{\mathbb{G}}
\nc{\red}{\color{red}}
\nc{\an}{\on{an}}
\nc{\D}{\on{D}}
\nc{\Pre}{\on{Pre}}
\nc{\circlegroupoid}{{S^{1, \bullet, \bullet}_{fil}} }
\nc{\qc}{\on{qc}}
\nc{\op}{\on{op}}
\nc{\shEnd}{{\mathcal End}}
\nc{\Sph}{\mathbb{S}}
\nc{\Top}{\on{Top}}
\nc{\Map}{\on{Map}}
\nc{\Vect}{\on{Vect}}
\nc{\holim}{\on{holim}}
\def\A{\mathcal{A}}
\def\a{\alpha}
\def\Perf{\on{Perf}}
\def\Sp{\on{Sp}}
\def\Y{\EuScript{Y}}
\nc{\W}{\mathbb{W}}
\def\QCoh{\on{QCoh}}
\title{Cogroupoid structures on the circle and the Hodge degeneration}
\author{Tasos Moulinos}
\date{}
\begin{document}
\vspace{18mm} \setcounter{page}{1} \thispagestyle{empty}

\maketitle

\begin{abstract}
We exhibit the Hodge degeneration from nonabelian Hodge theory as a $2$-fold delooping of the filtered loop space $E_2$-groupoid in formal moduli problems. This is an iterated groupoid object which in degree $1$ recovers the filtered circle $S^1_{fil}$ of \cite{moulinos2019universal}. 
 This exploits a hitherto unstudied additional piece of structure on the topological circle, that of an $E_2$-cogroupoid object in the $\infty$-category of spaces.  We relate this cogroupoid structure with the more commonly studied ``pinch map" on $S^1$, as well as the Todd class of the Lie algebroid $\mathbb{T}_{X}$; this is an invariant of a smooth and proper scheme $X$ that arises, for example, in the Grothendieck-Riemann-Roch theorem. In particular we relate the existence of non-trivial Todd classes for schemes to the failure of the pinch map to be formal in the sense of rational homotopy theory. Finally we record some consequences of this bit of structure at the level of Hochschild cohomology. 
\end{abstract}
\section{Introduction}
The de Rham cohomology of a ring or scheme comes equipped with a complete, decreasing filtration, known as the \emph{Hodge filtration}.
This has been studied in many capacities, and in particular occupies a paradigmatic role in the theory of Hodge structures. 
Carlos Simpson, in his work on nonabelian Hodge theory understood this filtration in the geometric language of stacks, using the paradigm of filtrations and geometric objects over the stack $\filstack$.  Using a deformation to the normal cone construction of Fulton-Macpherson (cf. \cite{fulton}), Simpson displayed the stack parametrizing \emph{$\lambda$-connections} as a 1-parameter degeneration of the de Rham stack $X_{dR}$, which itself parametrizes bundles with flat connection over a fixed scheme $X$. These $\lambda$-connections are exactly the objects which interpolate between Higgs bundles and bundles with flat connection, and give rise, upon passing to subcategories of  \emph{harmonic bundles}, to equivalences between the two structures. 

Meanwhile, there exists another one-parameter degeneration relating a \emph{sheared} version of the de Rham cohomology of a  scheme with its \emph{Hochschild homology}. This was studied in depth in \cite{moulinos2019universal} (see also \cite{raksit2020hochschild} for another perspective on the matter), wherein the authors constructed a filtration on Hochschild homology whose associated graded recovers the \emph{derived de Rham algebra}. This was accomplished algebro-geometrically, by way of the \emph{filtered loop space} $\filteredloopspaceX$; this is a relative derived scheme over $\filstack$ which base-changes to the loop space $\mathcal{L}X$ and to the shifted tangent bundle $\mathcal{T}_X[-1]$ thus recovering the $S^1$-equivariant HKR filtration on Hochschild homology. 

The purpose of this work is to relate these two constructions in the setting of derived geometry. Our main theorem can be stated as follows:

\begin{thm} \label{maintheorem}
Let $X$ be a derived scheme. Then  the filtered loop space $\filteredloopspaceX$ fits as the degree $(1,1)$ piece of a 2-groupoid $\mathcal{L}^{\bullet, \bullet}_{fil}(X)$ in formal derived stacks over $\filstack$. Taking the 2-fold delooping of this groupoid gives the following equivalence: 
$$
B^{(2)} \mathcal{L}^{\bullet, \bullet}_{fil}(X) \simeq X_{Hod}
$$
\end{thm}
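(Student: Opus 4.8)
The plan is to prove the equivalence by corepresentability, leveraging the $E_2$-cogroupoid structure on the filtered circle established earlier. Writing $\filteredloopspaceX = \Map_{/\filstack}(S^1_{fil}, X)$, I would first observe that the relative mapping stack functor $\Map_{/\filstack}(-, X)$ is contravariant and sends colimits of filtered spaces to limits of formal derived stacks; applied to the bi-cosimplicial object $\circlegroupoid$ it therefore yields a bisimplicial object $\mathcal{L}^{\bullet, \bullet}_{fil}(X)$ in formal derived stacks over $\filstack$, an $E_2$-groupoid whose $(1,1)$-piece is exactly $\filteredloopspaceX$. The $2$-fold delooping is then the iterated realization $B^{(2)} = \colim_{\Delta^{op}\times\Delta^{op}}$, and I would compute it one cosimplicial direction at a time.

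For the first (inner) delooping I would use that $S^1_{fil}$ is the filtered suspension of the $0$-sphere cogroupoid $\cogroupoidzerosphere$, i.e. the filtered pushout $* \sqcup_{\cogroupoidzerosphere} *$ in one cosimplicial direction. Since $\Map_{/\filstack}(-,X)$ converts this pushout into the pullback $X \times_{X\times X} X$, the $(1,1)$-piece is identified with the filtered self-intersection of the diagonal; by \cite{moulinos2019universal} this is the deformation to the normal cone of $X\hookrightarrow X\times X$, with generic fibre $\loopsX$ and associated graded $\shiftedtangent$, matching the known description of $\filteredloopspaceX$. Collapsing the first cosimplicial direction of $\circlegroupoid$ produces $\cogroupoidzerosphere$, so the inner delooping of $\mathcal{L}^{\bullet, \bullet}_{fil}(X)$ recovers the filtered formal pair groupoid $\Map_{/\filstack}(\cogroupoidzerosphere, X)$, i.e. the simplicial object $[n]\mapsto (X^{n+1})^{\wedge}_{\Delta}$ deformed to the normal cone, which is the relative \v{C}ech/Rees nerve of the diagonal over $\filstack$.

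The second (outer) delooping is the realization of this filtered pair groupoid, and here I would invoke Simpson's construction directly: $X_{Hod}$ is the quotient of $X$ by precisely the deformation to the normal cone of the diagonal, the Fulton--MacPherson/Rees groupoid recalled in the introduction (cf. \cite{fulton}). Thus $\colim_{\Delta^{op}}\Map_{/\filstack}(\cogroupoidzerosphere, X) \simeq X_{Hod}$, and I would verify this fibrewise as a consistency check: over the generic point the pair groupoid is the formal completion of $X\times X$ along the diagonal, whose realization is $X_{dR}$, while the associated graded is the $\shiftedtangent$-groupoid, whose realization is the Dolbeault special fibre of $X_{Hod}$. Assembling the two deloopings yields $B^{(2)}\mathcal{L}^{\bullet, \bullet}_{fil}(X)\simeq X_{Hod}$.

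The main obstacle, I expect, is the filtered and graded bookkeeping rather than either fibre computation in isolation: one must show that the $E_2$-cogroupoid structure on $S^1_{fil}$ is compatible with the $\GG_m$-action so that the iterated realization reproduces Simpson's Rees construction as a family over all of $\filstack$, not merely at the two fibres. Concretely this requires (i) justifying that the two colimits may be formed in either order and that each lands in formal, pro-nilpotent derived stacks, a representability and formal-descent point in the sense of Artin--Lurie, and (ii) matching the two cosimplicial directions of $\circlegroupoid$ with, respectively, the loop-to-pair-groupoid (pinch) degeneration and the pair-groupoid realization, so that the combined delooping is identified with the deformation to the normal cone at the filtered level. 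Once the filtered self-intersection is matched with the deformation to the normal cone compatibly with the $E_2$-structure, the equivalence follows.
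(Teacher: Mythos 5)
Your proposal follows essentially the same route as the paper: realize $\mathcal{L}^{\bullet,\bullet}_{fil}(X)$ as the iterated nerve of $X|_{\filstack} \to \Map_{\filstack}(S^{0,\bullet}_{fil}, X|_{\filstack})$, identify the first delooping with the levelwise formal completion of that pair groupoid (the deformation to the normal cone of the diagonal), and identify the second delooping with $X_{Hod}$, checking that the generic and central fibres give $X_{dR}$ and $X_{Dol}$. The outline is correct, but two of the steps you gloss are exactly where the paper does its work.

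First, the claim that $\Map_{\filstack}(-, X|_{\filstack})$ ``sends colimits to limits and therefore yields an $E_2$-groupoid'' does not apply directly to $\circlegroupoid$: the co-Segal pushouts defining $S^{1,\bullet,\bullet}_{fil}$ are computed in \emph{affine stacks} ($S^1_{fil}$ is $\Sigma_{aff}$, not $\Sigma$, of $S^0_{fil}$), and these differ from pushouts in all derived stacks. Proposition \ref{groupoidalnature} bridges this by comparing with the honest suspension $\Sigma^{2}\mathcal{Q}^{\bullet,\bullet}$ and showing that the levelwise affinization morphism induces equivalences of mapping stacks, using that in characteristic zero any map from a stack to a derived affine scheme factors through its affinization; without this the Segal conditions for $\mathcal{L}^{\bullet,\bullet}_{fil}(X)$ are not established. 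Second, for the deloopings to produce formal completions rather than honest quotients you must work in formal moduli problems, which requires knowing that each $\mathcal{L}^{(m,n)}_{fil}(X)$ is formally complete along $X|_{\filstack}$; the paper proves this in Proposition \ref{loopsalreadycomplete} (using that $X$ is a scheme, so there are no nonconstant maps $B\mathbb{H} \to X$) and then invokes the Gaitsgory--Rozenblyum correspondence of Theorem \ref{gaitsgorymain} to make $B^{(1)}$ and $B^{(2)}$ well defined and to identify $B^{(1)}\mathcal{L}^{\bullet,\bullet}_{fil}(X)$ with the \v{C}ech nerve $X^{\bullet}_{\lambda}$, after which the second delooping is $X_{Hod}$ essentially by definition. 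You flag both issues under ``representability and formal-descent'' but do not resolve them; supplying these two lemmas turns your sketch into the paper's proof.
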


As a corollary one obtains the following relationship between the de Rham space $X_{dR}$ and the loop space $\mathcal{L}X$:

\begin{cor}
The derived loop space $\mathcal{L}X$ admits a 2-groupoid structure whose iterated delooping is the de Rham space, i.e. there is an equivalence 
$$
B^{(2)}(\mathcal{L}X) \simeq X_{dR} 
$$
of formal derived stacks. 
\end{cor}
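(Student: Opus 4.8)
The plan is to deduce the corollary from Theorem~\ref{maintheorem} by restricting the entire $2$-groupoid, together with its delooping, to the generic fiber of the base $\filstack$. Write $j \colon \Spec \k \simeq \GG_m/\GG_m \into \filstack$ for the open immersion onto the generic point (classifying $1 \in \AAA^1$, all of whose nonzero scalings are identified by the $\GG_m$-action), and let $j^{*} = - \times_{\filstack} \Spec \k$ denote the associated base-change functor. The corollary amounts to the assertion that $j^{*}$ carries the equivalence of Theorem~\ref{maintheorem} to the desired statement about $\loopsX$ and $X_{dR}$.

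First I would identify the fibers. On the source side, it is shown in \cite{moulinos2019universal} that the generic fiber of the filtered circle $S^{1}_{fil}$ is the ordinary circle $S^{1}$, so that the filtered loop space restricts to the derived loop space, $j^{*}\filteredloopspaceX \simeq \loopsX$. Since $j^{*}$ is a base change it is left exact, hence it preserves groupoid (Segal) objects; applying it levelwise to $\mathcal{L}^{\bullet,\bullet}_{fil}(X)$ therefore yields a $2$-groupoid $j^{*}\mathcal{L}^{\bullet,\bullet}_{fil}(X)$ in formal derived stacks whose $(0,0)$-piece is $X$ and whose $(1,1)$-piece is $\loopsX$. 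This is precisely the claimed $2$-groupoid structure on $\loopsX$ over $X$. On the target side, the generic ($\lambda = 1$) fiber of the Hodge degeneration is, by Simpson's construction, the de Rham space: $j^{*} X_{Hod} \simeq X_{dR}$.

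It then remains to commute $j^{*}$ past the two-fold delooping. Since $B^{(2)}$ is computed as an iterated geometric realization --- that is, a colimit --- and since pullback along $j$ preserves colimits (colimits being universal in the ambient $\infty$-category of derived stacks, so that $j^{*}$ is a colimit-preserving left adjoint), one obtains
\[
j^{*} B^{(2)} \mathcal{L}^{\bullet,\bullet}_{fil}(X) \;\simeq\; B^{(2)} j^{*}\mathcal{L}^{\bullet,\bullet}_{fil}(X) \;\simeq\; B^{(2)}(\loopsX).
\]
Combining this with $j^{*} X_{Hod} \simeq X_{dR}$ and the main theorem produces the equivalence $B^{(2)}(\loopsX) \simeq X_{dR}$ of formal derived stacks.

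The hard part will be the last commutation: deloopings of groupoids valued in formal moduli problems need not be naive geometric realizations, as forming the quotient stack can involve a sheafification or a passage to formal completions, and one must ensure restriction to the generic fiber is compatible with whatever colimit presents $B^{(2)}$. I would address this by first exhibiting the delooping as a colimit in a presentable $\infty$-category of prestacks on which $j^{*}$ is manifestly colimit-preserving, and then checking a posteriori that the resulting object lies in formal derived stacks and that the Segal/groupoid conditions are preserved --- both of which follow from the left exactness of base change. By contrast, the two fiber identifications are quoted directly from \cite{moulinos2019universal} and from nonabelian Hodge theory, and require no new work.
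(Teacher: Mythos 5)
Your proposal is correct and follows exactly the route the paper intends: the corollary is obtained from Theorem \ref{maintheorem} by base change along the generic point $\Spec k \to \filstack$, using that $\eta^*S^1_{fil}$ recovers (the affinization of) $S^1$ so that the filtered loop space specializes to $\mathcal{L}X$, and that $X_{Hod}\times_{\filstack}\Spec k \simeq X_{dR}$. Your extra caution about commuting $j^*$ with $B^{(2)}$ is well placed and is most cleanly discharged via Theorem \ref{gaitsgorymain}: base change along the flat map $\eta$ preserves nil-isomorphisms and \v{C}ech nerves, so the generic fiber of the delooping satisfies the universal property of the delooping of the generic fiber.
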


These statements follow directly from a canonical $E_2$-\emph{cogroupoid} (which we often call a $2$-cogroupoid) structure on the filtered loop space, $S^1_{fil}$. More precisely, the filtered loop space sits as the ``$(1,1)$ space of morphisms" of a bi-cosimplicial stack satisfying an analog of the Segal conditions (cf. \cite[Section 6.1]{lurie2009higher}). Given a stack $X$, the formation of mapping objects out of a $E_2$-cogroupoid object into $X$ give rise to a $E_2$-groupoid; in this particular case where the cogroupoid is $S^1_{fil}$, we obtain $\mathcal{L}^{\bullet, \bullet}_{fil}(X)$. 

We note the following interesting consequence about the filtered circle. In \cite{moulinos2019universal} it was verified that, when working over $\mathbb{Q}$, there is an equivalence 
$$
(S^1_{fil})^{u} := \eta^*(S^1_{fil}) \simeq B \GG_a \simeq  \iota (S^1_{fil}) =: S^1_{gr}.
$$
Here, $\eta: \Spec k \to \filstack$ denotes the inclusion of the ``generic" point of $\filstack$, while $\iota: B \GG_m \to \filstack$ denotes the inclusion of the ``closed" point; in the language of \cite{geometryoffiltr}, restriction of a stack $\EuScript{X} \to \filstack$ along these maps recovers the underlying and associated graded stacks of $\EuScript{X}$ respectively.  

Thus  $S^1_{fil}$ is a constant degeneration (see Section \ref{section formal groups todd} for this terminology) of stacks. This is not the case when one takes the $E_2$-cogroupoid structure into account:

\begin{cor}
The $E_2$-cogroupoid $S^{1, \bullet, \bullet}_{fil} \to \filstack$ is not a constant degeneration of $E_2$-cogroupoids. In particular the pullbacks of $S^{1, \bullet, \bullet}_{fil}$ along $\Spec k \to \filstack$ and $B \GG_m \to \filstack$ are not equivalent. 
\end{cor}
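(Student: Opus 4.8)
The plan is to argue by contradiction, exploiting the tension already visible in the introduction: the \emph{underlying} degeneration $S^1_{fil} \to \filstack$ is constant, since both fibers are $B\GG_a$, so the nonconstancy asserted here lives entirely in the $E_2$-cogroupoid structure and cannot be seen from the bare stack. Suppose then that the two fiberwise cogroupoids $\eta^*(S^{1,\bullet,\bullet}_{fil})$ (over $\Spec k$) and $\iota(S^{1,\bullet,\bullet}_{fil})$ (the graded $S^1_{gr}$, over $B\GG_m$) were equivalent as $E_2$-cogroupoids, as would be forced by constancy of the degeneration. The first step I would take is to transport this putative equivalence through the mapping-object functor $\Map(-,X)$: for any derived scheme $X$ this functor sends $E_2$-cogroupoids to $E_2$-groupoids, preserves equivalences, and commutes with the base changes cutting out the two fibers. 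Hence a constant family of cogroupoids would yield a constant family of $E_2$-groupoids over $\filstack$, namely $\mathcal{L}^{\bullet,\bullet}_{fil}(X)$.

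Next I would feed this into Theorem \ref{maintheorem}. Applying the $2$-fold delooping $B^{(2)}$, constancy of $\mathcal{L}^{\bullet,\bullet}_{fil}(X)$ would exhibit $X_{Hod}$ as a constant degeneration over $\filstack$; in particular its generic fiber $X_{dR}$ and its associated-graded (Dolbeault) fiber $X_{Dol}$ would be equivalent as formal derived stacks, compatibly with the entire family. It therefore suffices to exhibit a single $X$ for which these two fibers differ, since any such witness contradicts the assumed equivalence of cogroupoids.

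The invariant I would use to separate them is the Todd class of $\shiftedtangent$. Under the HKR equivalence the generic groupoid $\loopsX \simeq \Map(S^1,X)$ and the graded groupoid $\shiftedtangent \simeq \Map(S^1_{gr},X)$ have equivalent underlying objects, but the two composition (i.e. groupoid) structures --- induced by the pinch map on the generic side and by its formalization on the graded side --- disagree precisely up to multiplication by $\on{Td}(\shiftedtangent)$, by the Grothendieck--Riemann--Roch / Markarian--C\u{a}ld\u{a}raru comparison. This is exactly the failure of the pinch map to be formal alluded to in the introduction. Choosing $X$ with $\on{Td}(\shiftedtangent)\neq 1$ (for instance $X=\PP^1$, or any smooth proper $X$ of positive dimension) then produces inequivalent groupoid structures, whence $X_{dR}\not\simeq X_{Dol}$, the desired contradiction.

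The main obstacle, and the delicate point throughout, is precisely that the two fibers coincide as bare stacks: no invariant of the underlying object of $S^{1,\bullet,\bullet}_{fil}$ or of $\mathcal{L}^{\bullet,\bullet}_{fil}(X)$ can detect the nonconstancy, so one is committed to working with the full $E_2$-structure. The crux is to verify that the Todd-class discrepancy genuinely survives the passage through $\Map(-,X)$ and the delooping $B^{(2)}$ rather than being washed out in forming $X_{Hod}$, and to identify this surviving obstruction literally with $\on{Td}(\shiftedtangent)$ so that its nonvanishing for the chosen $X$ closes the argument.
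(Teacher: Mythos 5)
Your proposal follows essentially the same route as the paper's proof: apply $\Map(-,X)$ to turn the putative constancy of the cogroupoid into constancy of the $E_2$-groupoid $\mathcal{L}^{\bullet,\bullet}_{fil}(X)$, invoke Theorem \ref{maintheorem} to identify the $2$-fold delooping with $X_{Hod}$, and conclude from $X_{dR}\not\simeq X_{Dol}$. The only divergence is that the paper treats $X_{dR}\not\simeq X_{Dol}$ as a known fact (e.g.\ $\mathcal{D}$-modules versus Higgs sheaves) rather than routing the separation through the Todd class, which in the paper belongs to the separate $E_1$-level non-formality argument for the pinch map; your detour is not needed for this corollary.
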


In order to prove theorem \ref{maintheorem} we work in the setting of formal geometry, and formal moduli problems. As a clue for why we find ourselves in this setting essentially at the outset, we remark that the derived  loop space $\mathcal{L}X$ is formally complete along $X$ when $X$ is a scheme. More generally, we will see that the filtered loop space obtains the structure of an $E_2$-groupoid  $ \mathcal{L}^{\bullet, \bullet}_{fil}(X)$ in the $\infty$-category of \emph{formal moduli problems over $X$}.  Meanwhile, the map $X \to X_{dR}$ to the de Rham space of $X$ is also a nil-isomorphism. In fact $X_{dR}$ is the final object in the category of formal moduli problems under $X$.

A key property which we exploit in the setting  of formal moduli problems is the following:
given a map $f: X \to Y$, which is a nil-isomorphism,   $Y$ may be recovered as the classifying space of the Čech nerve. In this sense, the class of  nil-isomorphisms play the same role within formal geometry as the class  of  effective epimorphisms in derived geometry. Due to all this, the relation between the Hodge degeneration and the filtered loop space is most easily distilled in this setting of formal moduli problems, which we shall briefly review in Section \ref{section formalmodulistuff}.

\subsection{The pinch map on $S^1$ and the Todd class}
In a very influential paper \cite{markarian2009atiyah}, N. Markarian described the Todd genus of a smooth proper scheme $X$ as an invariant volume form with respect to the Hopf algebra structure on Hochschild homology. This was made more precise in \cite{kondyrev2019equivariant}, in terms of the formal group structure on the derived loop space $\loopspaceX$. In this paper, we expand on this story by describing the group structure on $\loopspaceX$ as arising from an $E_1$-cogroupoid structure on the circle $S^1$. This $E_1$-cogroupoid structure, equivalently a \emph{cogroup} structure on $S^1$ as a \emph{pointed space}, is none other than the well known ``pinch map" which gives rise to the group structure on the fundamental groups of topological spaces. This $E_1$ cogroupoid, which exists since $S^1$ is a suspension of a space, can also be extracted as either the row or column $1$ sub-cogroupoid of the $E_2$-cogroupoid $S^{1,\bullet, \bullet}$
It is well known that over the rationals, the cochain dga  $C^*(S^1, k)$ is formal, so that there is an equivalence 
$$
C^*(S^1, k) \simeq H^*(S^1, k) 
$$
We show that this formality statement is not true at the level of $E_1$ cogroupoids, and in particular, this failure is measured by the Todd class:

\begin{thm}
The existence of non-trivial Todd classes implies that the cogroupoid  (equivalently pointed cogroup) structure on $S^1$ corresponding to the pinch map is not formal.  
\end{thm}

One can summarize the above by saying that the ``pinch map" on $S^1$ is not formal, even though the dga of  cochains on $S^1$ is well-known to be formal when working rationally. Furthermore, the difference between the induced groupoid structure on $C^*(S^1, k)$ and that on $H^*(S^1, k)$ manifests itself algebro-geometrically, by way of the two different group structures on the shifted tangent bundle $\mathcal{T}_{X}[-1]$, which in turn is measured by the Todd class of the tangent Lie algebroid $\mathbb{T}_{X}$.

\subsection{Remarks on Hochschild cohomology}
The cogroupoid structures on $S^1$ also manifest themselves at the level of Hochschild \emph{cohomology}. As we will remark in Section \ref{section Hochschild cohomology}, the well known $E_2$-algebra structure on Hochschild cohomology together with its (dual) HKR filtration arises from the $E_2$-cogroupoid $S^1_{fil}$. 

\begin{prop}
Let $X$ be a derived scheme. The $E_2$-cogroupoid $S^1_{fil}$ gives rise to filtration on $\on{HH}^{*}(X)$ compatible with its $E_2$-algebra structure.  
\end{prop}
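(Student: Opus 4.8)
The plan is to deduce the Proposition from the $E_2$-group structure on the filtered loop space supplied by Theorem \ref{maintheorem}, transporting it to an algebraic structure on cohomology through the identification of Hochschild cohomology with the convolution algebra of the loop space, viewed as a formal group over $X$. The guiding principle is that the functor $\Map(-,X)$, valued in formal moduli problems over $X$, turns a cogroupoid into a groupoid, and that linearizing a formal group (passing to its algebra of distributions) turns a group structure into an associative algebra structure; running this for the $E_2$-cogroupoid $\circlegroupoid$ produces an $E_2$-algebra, which we identify with $\on{HH}^*(X)$ together with its filtration.

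First, in the unfiltered setting, I would recall the identification $\on{HH}^*(X) \simeq \End_{\QCoh(X)}(\id)$ and its description, following \cite{kondyrev2019equivariant}, as the algebra of distributions on the derived loop space $\loopsX$. Under HKR the loop space is the formal group $\shiftedtangent$ over $X$, with Lie algebra the shifted tangent Lie algebroid $\mathbb{T}_X[-1]$; its group law is induced by the $E_1$-cogroup (pinch) structure on $S^1$, and the resulting convolution product on distributions is the cup product on $\on{HH}^*$. Upgrading the pinch map to the $E_2$-cogroupoid, the mapping-out functor yields the $E_2$-groupoid $\mathcal{L}^{\bullet,\bullet}(X)$, and passing to distributions yields an $E_2$-algebra structure on $\on{HH}^*(X)$; this is the Gerstenhaber/Deligne structure.

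Next I would run the identical argument over $\filstack$. Because $S^1_{fil}$ carries an $E_2$-cogroupoid structure over $\filstack$ (the content of Theorem \ref{maintheorem}, building on \cite{moulinos2019universal}), its mapping object $\mathcal{L}^{\bullet,\bullet}_{fil}(X)$ is an $E_2$-groupoid in formal moduli problems over $\filstack$, i.e. a filtered $E_2$-group. The formation of distributions is functorial and symmetric monoidal for the Day convolution structure on $\QCoh(\filstack)$, so it produces a \emph{filtered} $E_2$-algebra whose underlying object is $\on{HH}^*(X)$. Compatibility of the filtration with the $E_2$-structure is then immediate, since the entire construction takes place internally to filtered objects over $\filstack$; this is the assertion of the Proposition. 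To identify the filtration as the dual HKR filtration, I would compute the two geometric fibers: pullback along $\Spec k \to \filstack$ recovers $\loopsX$, hence $\on{HH}^*(X)$ with its usual structure, while pullback along $B\GG_m \to \filstack$ recovers the shifted tangent bundle $\shiftedtangent$ with its abelian group law, whose distributions are $\Gamma(X, \Sym(\mathbb{T}_X[-1])) \simeq \bigoplus_p \Gamma(X, \wedge^p \mathbb{T}_X)[-p]$, the polyvector fields with the Schouten--Nijenhuis bracket. This exhibits the associated graded as polyvector fields and pins down the filtration as the PBW, dual-HKR filtration.

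The main obstacle is the passage, in the second paragraph, from the geometric group law to the algebraic convolution product, carried out compatibly with the filtration. One must be attentive to variance, since $\on{HH}^*(X)$ is the distribution (endomorphism) algebra rather than the function coalgebra $\on{HH}_*(X) = \Gamma(\loopsX, \OO)$, and one must verify that the $E_2$-structure produced by the $E_2$-cogroupoid agrees with the classical Gerstenhaber/$E_2$ structure on Hochschild cochains. Pinning down this last compatibility --- rather than the essentially formal bookkeeping of the filtration --- is the crux of the argument.
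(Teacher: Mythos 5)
Your proposal is correct and follows essentially the same route as the paper: the paper's proof takes the $E_2$-groupoid $\mathcal{L}^{\bullet,\bullet}_{fil}(X)$ obtained by mapping out of the $E_2$-cogroupoid, observes that it makes $\OO_{\mathcal{L}_{fil}X}$ an $E_2$-coalgebroid over $\OO_X$, and passes to $\OO_X$-linear duals (your ``distributions with convolution product'') to land on $\on{HH}^*(X)$, with the filtered statement following by running the whole construction internally to $\QCoh(\filstack)\simeq\on{Fil}(\Mod_k)$. The compatibility with the classical Gerstenhaber $E_2$-structure that you flag as the crux is likewise left implicit in the paper, so you are not missing a step the paper supplies; your identification of the associated graded with polyvector fields appears in the paper only as a separate corollary proved via the $S^{0,\bullet}_{fil}$ convolution category.
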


Perhaps more surprisingly, we highlight that one only needs an $E_1$-cogroupoid structure on the zero sphere $S^0$ to recover the $E_2$-algebra structure on Hochschild cohomology. As the zero sphere is not a suspension of a space, there exists no cogroup structure on it; in general, there is no group structure on $\pi_0(X)$ of a topological space in general. However, by taking the conerve of the map $\emptyset \to *$ from the initial object to the final object in spaces, we recover a cogroupoid with $S^0$ in degree 1. 

We summarize the corresponding discussion in Section \ref{section Hochschild cohomology} with the following proposition
\begin{prop}
The cogroupoid $S_{fil}^{0, \bullet}$ (cf. Section \ref{section cogroupoid objects}) gives rise to a monoidal structure on 
\begin{equation} \label{svenchen}
   \QCoh(\Map_{\on{dStk_{\filstack}}}(S^0_{fil},X|_{\filstack})), 
\end{equation}
the $\infty$-category of quasi coherent sheaves on the mapping stack $\Map_{\on{dStk_{\filstack}}}(S^0_{fil},X|_{\filstack})$. This specializes, by pulling back along the generic fiber, to the convolution  monoidal structure on 
$$
\QCoh(X \times X) \simeq \on{Fun}_{k}(\QCoh(X),\QCoh(X) )
$$
Forming endomorphisms of the unit in (\ref{svenchen}) gives an $E_2$-algebra $\on{HH}_{fil}^{*}(X)$ in filtered complexes $\on{Fil}(\Mod_k)$, which specializes along the generic fiber of $\filstack$ to Hochschild cohomology. 
\end{prop}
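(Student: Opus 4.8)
The plan is to transport the $E_1$-cogroupoid structure on $S^{0,\bullet}_{fil}$ through the mapping-out functor to obtain a groupoid object in stacks over $\filstack$, feed that groupoid into $\QCoh$ to produce a convolution monoidal structure, and finally extract the advertised $E_2$-algebra as the endomorphisms of the monoidal unit. Recall from Section \ref{section cogroupoid objects} that $S^{0,\bullet}_{fil}$ is the conerve over $\filstack$ of the map $\emptyset \to \pt_{fil}$, so it is a cosimplicial object of $\on{dStk}_{\filstack}$ with $\pt_{fil}$ in degree $0$ and $S^0_{fil} = \pt_{fil} \sqcup \pt_{fil}$ in degree $1$, satisfying the (co)Segal conditions. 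Since $\Map_{\on{dStk}_{\filstack}}(-, X|_{\filstack})$ is contravariant and sends the coproducts appearing here to products, it carries this cosimplicial cogroupoid to a simplicial object $\mathcal G_\bullet$ with $\mathcal G_0 = X|_{\filstack}$ and $\mathcal G_1 = \Map_{\on{dStk}_{\filstack}}(S^0_{fil}, X|_{\filstack})$; concretely $\mathcal G_\bullet$ is the relative \v{C}ech nerve of $X|_{\filstack} \to \pt_{fil}$, i.e.\ the pair groupoid with $\mathcal G_n \simeq (X|_{\filstack})^{n+1}$. The point to check here is that the coSegal conditions dualize to the Segal (and invertibility) conditions, so that $\mathcal G_\bullet$ is a genuine groupoid, in particular a category, object in $\on{dStk}_{\filstack}$.

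Next, a category object $\mathcal G_\bullet$ endows $\QCoh(\mathcal G_1)$ with a monoidal structure by convolution: with $p_1, p_2, m : \mathcal G_2 \to \mathcal G_1$ the two face maps and the composition map, the product is $\cF \star \cG := m_*(p_1^*\cF \otimes p_2^*\cG)$. I would obtain this invariantly by regarding $\mathcal G_\bullet$ as an associative monoid object in the $\infty$-category of correspondences of stacks over $\filstack$, and then applying the lax symmetric monoidal functor $\QCoh$, which turns the two legs of a correspondence into a pullback and a pushforward; associativity and unitality of $\star$ are then inherited from the simplicial identities of $\mathcal G_\bullet$. This produces the monoidal structure on $\QCoh(\Map_{\on{dStk}_{\filstack}}(S^0_{fil}, X|_{\filstack}))$ asserted in (\ref{svenchen}), entirely in the filtered setting. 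Pulling the whole diagram back along $\Spec k \to \filstack$ identifies $S^0_{fil}$ with $S^0$ and hence $\mathcal G_\bullet$ with the ordinary pair groupoid $X \times X \rightrightarrows X$; under $\QCoh(X \times X) \simeq \on{Fun}_k(\QCoh(X), \QCoh(X))$ the convolution product $\star$ goes over to composition of endofunctors, recovering the standard convolution monoidal structure. This specialization only requires that $\QCoh$ and the mapping stack commute with the base change, which holds for $X$ a qcqs derived scheme.

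To finish, the unit of $\star$ is the pushforward of the monoidal unit along the identity-assigning map $\mathcal G_0 \to \mathcal G_1$; on the generic fiber this is the degeneracy $\Delta_* \OO_X$, the integral kernel of the identity functor, whose endomorphism object is by definition $\on{HH}^*(X)$. The filtered construction therefore produces $\on{HH}^*_{fil}(X) \in \on{Fil}(\Mod_k)$, specializing to $\on{HH}^*(X)$ generically. The $E_2$-structure is then automatic: in any monoidal $\infty$-category the endomorphisms of the unit object carry two interchanging, unital multiplications, namely composition and the monoidal product $\star$, and hence form an $E_2$-algebra by the Eckmann--Hilton argument. This realizes the key point that only an $E_1$-cogroupoid on $S^0$ is needed to see the $E_2$-algebra, the second multiplication being supplied for free by the monoidal structure on $\QCoh(\mathcal G_1)$.

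The main obstacle I anticipate is the execution of the middle step in the relative filtered context: packaging the groupoid $\mathcal G_\bullet$ as an honest monoid in correspondences so that $\QCoh$ returns a genuine, rather than merely lax, monoidal structure with the expected unit, and ensuring the requisite pushforward and base-change statements hold relative to $\filstack$. By comparison, identifying the unit with $\Delta_*\OO_X$, matching its endomorphisms with Hochschild cohomology, and promoting everything to an $E_2$-algebra in $\on{Fil}(\Mod_k)$ compatibly with the generic-fiber specialization should be comparatively formal, provided one is consistent about working throughout in the filtered category.
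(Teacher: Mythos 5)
Your overall strategy is the one the paper uses: turn the cogroupoid into a groupoid object by mapping into $X|_{\filstack}$, realize the convolution monoidal structure by viewing that groupoid as a monoid in correspondences and applying the (symmetric monoidal, per Gaitsgory--Rozenblyum) functor $\QCoh$, identify the unit with $\Delta_*\OO_X$ on the generic fiber, and get the $E_2$-structure from endomorphisms of the unit in a monoidal $\infty$-category. The obstacle you flag at the end (genuine vs.\ lax monoidality, base change) is exactly what the paper disposes of by citing the correspondence formalism of \cite{gaitsgory2019study} and \cite[Remark 4.11]{ben2010integral}.

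There is, however, a genuine error in your identification of the input. You take $S^{0,\bullet}_{fil}$ to be the conerve of $\emptyset \to \filstack$, so that $S^0_{fil} = \filstack \sqcup \filstack$ and $\mathcal G_\bullet$ is the constant pair groupoid with $\mathcal G_n \simeq (X|_{\filstack})^{n+1}$. That is not the object defined in the paper. There, $S^{0,\bullet}_{fil}$ is $\Spec_{\filstack}$ of the nerve of the unit map $\OO_{\filstack} \to 0_*(\OO_{B\GG_m})$ in $\on{CAlg}(\on{Fil}(\Mod_k))$ --- equivalently the conerve, \emph{in affine stacks over $\filstack$}, of the ``quantum point'' $\mathcal{Q} \to \filstack$, whose generic fiber is empty but whose central fiber is $\Spec(k \oplus k[1](-1))$. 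Consequently $S^0_{fil}$ restricts to $S^0$ over $\eta$ but to $\Spec(k[\epsilon]/\epsilon^2)$ (with $\epsilon$ in weight $-1$) over $B\GG_m$, and the central fiber of $\mathcal G_1$ is the shifted tangent bundle $\mathcal{T}X$, not $X\times X$. With your identification the degeneration is constant, so the ``filtration'' you produce on $\on{HH}^*(X)$ is the trivial one; the content of the proposition --- that the filtered mapping stack produces the (dual HKR) filtration whose special fiber yields $\Sym(\mathbb{T}_X[-1])$ --- is lost. The generic-fiber assertions in your write-up survive, but the filtered statement does not, so the proof needs to be rerun with the correct $S^{0,\bullet}_{fil}$ (everything else in your argument then goes through as in the paper).
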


Finally, we remark that one may iterate the constructions, to obtain $E_{n+1}$ cogroupoids $S^{n, \bullet, \cdots, \bullet}_{fil}$ over $\filstack$. These give rise, verbatim to the filtrations on iterated Hochschild cohomology $HH^*_{E_n}$ compatible with the $E_{n+1}$-algebra structure.  

\vskip \baselineskip
\noindent {\bf Conventions.}
In general we work over a characteristic zero base $k$, although many of the constructions work more generally. In another vein, we work freely in the setting of $\infty$-categories and higher algebra from \cite{lurie2017higher}. Similarly, we heavily utilize at times the formalism of formal geometry in \cite{gaitsgory2019study, gaitsgory2020study}, which in particular does depend on the fact we work over $\mathbb{Q}$. 
\vskip\baselineskip
\noindent{\bf Acknowledgements.} I would like to thank Bertrand To\"{e}n for various conversations and ideas which led to the content of paper. I would also like to thank Joost Nuiten for helpful conversations. This work is supported by the grant NEDAG ERC-2016-ADG-741501. 

\section{Formal moduli problems and formal groupoids} \label{section formalmodulistuff}

  Much of the interesting geometry of a derived stack $\EuScript{X}$ is detectable at the infinitesimal level. Heuristically, a  derived stack can be viewed as a family of infinitesimal thickenings of a scheme, parametrized by its points. In the language of formal geometry, this can be made precise by the slogan that the map $X \to X_{dR}$ of a (nice enough) derived stack to its de Rham stack is a crystal of formal derived stacks. The reader may consult \cite[Section 2]{shiftedpoisson} for more on this perspective.

The sought-after relationship between the filtered loop space and the de Rham stack will exploit the fact that these two objects admit the structure of a \emph{formal moduli problem} (at least when $X$ is a scheme) relative to  $X$. A key feature of the theory of formal moduli problems, which we lift from \cite{gaitsgory2020study} is the well behaved correspondence between groupoid objects and formal moduli problems under $X$. We will eventually exploit this to relate the filtered loop space with the Hodge degeneration $X_{Hod}$.

As we follow the formalism developed in \cite{gaitsgory2020study}, we review some basic constructions and definitions found therein. In particular, we highlight the distinction between formal moduli problems \emph{over} a given stack and formal moduli problems \emph{under} a stack.  In order to proceed, we first recall what it means for a stack to admit a \emph{deformation theory}.  The basic setup here is over a field $k$ of characteristic zero.

\subsection{DAG preliminaries}

Before diving into some formal geometry, we review the notions of derived stacks which we will be working in.

We recall that there are two variants of ``derived" geometric objects , one whose affine objects are connective $E_{\infty}$-rings, and one where the affine objects are simplicial commutative rings.  In characteristic zero, the two contexts are equivalent.  We review parallel constructions from both simultaneously, as we will switch between both settings.  

Fix a commutative ring $R$ and let $\EuScript{C} = \{ \on{CAlg}^{\on{cn}}_R, \on{sCAlg}_R \}$ denote either of the $\infty$-category of connective $R$-algebras or the $\infty$-category of simplicial commutative $R$-algebras. Recall that the latter can be characterised as the completion via sifted colimits of the category of (discrete) free $R$-algebras. There exists a functor
$$
\theta: \on{sCAlg}_R \to \on{CAlg}_{R}^{\on{cn}};
$$
which takes the underlying connective $E_\infty$-algebra of a simplicial commutative algebra. This preserves limits and colimits so is in fact monadic and comonadic. In characteristic zero, this is in fact an equivalence, and this is often the setting we will find ourselves in within this paper.  

In any case one  may define a derived stack via its functor of points, as an object of the $\infty$-category 
$\on{Fun}(\EuScript{C}, \mathcal{S})$ satisfying hyperdescent with respect to a suitable topology on $\EuScript{C}^{op}$, e.g the \'{e}tale topology. Throughout the sequel we distinguish the context we  are working in by letting $\on{dStk}_{R}$ denote the $\infty$-category of derived stacks and let $\on{dStk}^{E_\infty}_R$ denote the $\infty$-category of ``spectral stacks" over $R$. In either cases, one obtains an $\infty$-topos, which is Cartesian closed, so that it makes sense to talk about internal mapping objects:  given any two $X,Y \in \on{Fun}(\EuScript{C}, \mathcal{S})$, one forms the mapping stack $\Map_{\EuScript{C}}(X, Y)$,  In various cases of interest, if the source and/or target is suitably representable by  a derived scheme or a derived Artin stack, then this is the case for $\Map_{\EuScript{C}}(X, Y)$ as well. 
\\

In addition we also will occasionally find ourselves working with \emph{higher stacks}. This genre of geometric objects was introduced by Simpson, and essentially is composed of stacks on the site of discrete commutative rings equipped with for example the \'{e}tale or fppf topology. For the sake of maintaining a somewhat self-contained exposition, we now define this as following:

\begin{defn}
Let $\on{Aff}_R$ be the category of affine schemes, equivalently the opposite of the category of discrete commutative $R$-algebras equipped with some (classical) Grothendieck topology. Then we set 
$$
\on{Stk}_R :=  \on{Shv}^{\tau}_{R}  := \on{Fun}(\on{CAlg}_R, \mathcal{S})^{\tau} 
$$
to be the $\infty$-category of higher stacks, equivalently that of sheaves of spaces with respect to the topology $\tau$.  
\end{defn}
 
\begin{rem}
Higher stacks provide the natural ambient setting for the notion of \emph{affine stacks}, introduced by Toën in \cite{toen2006champs} which we will briefly review in Section \ref{section affine cogroupoids}. The cogroupoid objects we construct and study in this Section \ref{section affine cogroupoids} will typically live in this setting of higher stacks. It is an interesting phenomenon, that this setting which is somehow ``discrete in the domain" and "derived" or ``homotopical" in the target, provides a home for affine objects which are, by their very nature, coconnective, at least when viewed as $E_\infty$-algebras. 
\end{rem}

\subsection{Formal moduli problems}
In this section we set up the necessary background regarding formal moduli problems. These objects capture the infinitesimal part of the geometry of derived stacks and feature prominently in this work. First we recall some auxiliary notions, leading up to the notion of a formal moduli problem.  

\begin{defn} \label{convergentproperty}
A stack $\EuScript{X}$ is \emph{convergent} or \emph{nil-complete} if for derived affine scheme $\Spec B$, the natural map
$$
 \EuScript{X}(B) \to \lim_k \EuScript{X}(\tau_{\leq k}B) 
$$
is an equivalence. 
\end{defn}

\begin{defn}(\cite[Section 0.1]{gaitsgory2020study})\label{deformationtheory}
Let $\EuScript{X}$ be a derived stack. We say that $\EuScript{X}$ admits a \emph{deformation theory} if it is convergent, and  such that for every pushout square of affine schemes 

$$
\xymatrix{
& S_1 \ar[d] \ar[r] & S_2 \ar[d] \\
 & S^{'}_{1}  \ar[r] & S^{'}_{2}, 
}
$$
where the map $S_1 \to S^{'}_{1}$ is a \emph{nilpotent embedding} (i.e. the map on truncations is a closed embedding with nilpotent ideal of definition), the resulting diagram   
$$
\xymatrix{
& \Map(S^{'}_{2}, \EuScript{X}) \ar[d] \ar[r] & \Map(S_{2}, \EuScript{X}) \ar[d] \\
 & \Map(S^{'}_{1}, \EuScript{X})   \ar[r] & \Map(S_{1}, \EuScript{X}) , 
}
$$
is a pullback diagram. 
\end{defn}

\begin{rem}
In fact this is enough, by \cite{gaitsgory2020study} to guarantee the existence of the \emph{pro-cotangent  complex} of $\EuScript{X}$. This is an assignment, for every derived scheme $x: S \to \EuScript{X}$,of a pro-object 
$$
T^*_{x}(\EuScript{X})   \in \on{Pro}(\QCoh(S)),
$$
which governs the infinitesimal behavior of $\EuScript{X}$ at the point $x$. 

\end{rem}
\begin{rem}
Any $n$-Artin stack, for example, satisfies the properties of Definition   \ref{deformationtheory}, and thus admits a deformation theory. 
\end{rem}

We also recall the notion of `locally almost of finite type"(=laft) from \cite{gaitsgory2019study}. For this we first need to recall what it means for an (derived) affine scheme to be locally of finite type. 

\begin{defn}
Let $X = \Spec A$ be a derived affine scheme. Then $X$ is of finite type if $\pi_0(A)$ is of finite type over $k$ and if each $\pi_n(A)$ is finitely generated as a module over $\pi_0(A)$. 
\end{defn}

Next one defines what it means for a (pre)stack to be locally of finite type. 

\begin{defn}
Let $\EuScript{X}$ be an ``$n$-coconnective" derived stack. We say that $\EuScript{X}$ is locally of finite type if it arises as the left Kan extension of its own restriction along the embedding
$$
\on{Sch}^{\on{aff}}_{\on{ft}} \hookrightarrow \on{Sch}^{\on{aff}}.
$$
In particular this means that $\EuScript{X}$ is locally of finite type if it is determined by its values on affine schemes of finite type. 
\end{defn}

Now we define what it means for a general stack to be ``laft".

\begin{defn} \label{laftnessforstacks} 
Let $\EuScript{X}$ be an arbitrary stack. Then we say that it is locally almost of finite type if the following conditions hold:
\begin{itemize}
    \item $\EuScript{X}$ is nil-complete (or convergent) (Definition \ref{convergentproperty})
    \item For every $n$, we have that  $^{\leq n} \EuScript{X}$ is locally of finite type. 
\end{itemize}
\end{defn}

\begin{rem}
A key reason for working with laft  stacks  (as well as the notion of an \emph{inf scheme} appearing in the following definition) is that this is the ``right" framework in order to set up the correct functoriality for $\on{IndCoh}(-)$. For the sake of completeness we have included the definitions, but we will not need to focus on this condition in any particular depth.  
\end{rem}
We are finally ready to define the main objects of this section, formal moduli problems. 

\begin{defn}\label{fmpoverastack}
Let $\EuScript{X}$ be a derived stack locally almost of finite type. The $\infty$-category of formal moduli problems \emph{over} $\EuScript{X}$ is the full subcategory spanned by $\EuScript{Y} \to \EuScript{X}$ for which the map is
\begin{itemize}
    \item inf-schematic. This means that the base change along a map $\Spec B \to \EuScript{X}$ is an inf-scheme, i.e it satisfies the laft condition, it admits a deformation theory and its reduction will be a reduced quasi-compact scheme. (cf. \cite[Chapter 2]{gaitsgory2020study}) 
    \item a nil-isomorphism. Recall that this means that the map of the ``reduced stacks"  $\EuScript{X}_{red} \to \EuScript{Y}_{red}$ is an isomorphism. 
\end{itemize}
We denote this category by $\on{FMP}_{/ \EuScript{X}}$.
\end{defn}
Next we define the notion of formal moduli problems under a fixed (pre)stack. 

\begin{defn}
Let $\EuScript{X} \in \on{dStk}_{\on{laft-def}}$ be a fixed (pre)stack which is both locally almost of finite type, and which admits a deformation theory.  The $\infty$-category of formal moduli problems \emph{under} $\EuScript{X}$ is spanned by those $\EuScript{X }\to \EuScript{Y}$ for which:
\begin{itemize}
    \item $\EuScript{Y}$ is itself locally of finite type and admits a deformation theory.
    \item the map $\EuScript{X} \to \EuScript{Y}$ is a nil-isomorphism.
\end{itemize}
We denote this $\infty$-category by $\on{FMP}_{\EuScript{X}/}$
\end{defn}

Next we define the notion of a formal groupoid: 

\begin{defn} \label{formalgroupoid}
Let $\EuScript{X}$ be a fixed derived stack, locally almost of finite type. We let $\on{FormGrpoid}(\EuScript{X})$ denote the $\infty$-category of groupoid objects in  $\on{FMP}_{/\EuScript{X}}$
\end{defn}
 
The key result which we borrow from this theory will be the following:

\begin{thm} \label{gaitsgorymain}(\cite[Theorem 2.3.2]{gaitsgory2020study}
Let $\EuScript{X}$ be a stack which admits a deformation theory. Then there is an equivalence of $\infty$-categories:
$$
B_{\EuScript{X}}: \on{FormGrpoid}(\EuScript{X}) \leftrightarrows \on{FormMod}_{\EuScript{X}/} : \EuScript{N}_{\EuScript{X}}(-)
$$
\end{thm}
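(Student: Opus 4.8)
The plan is to realize $B_{\EuScript{X}}$ and $\EuScript{N}_{\EuScript{X}}$ as mutually inverse equivalences, where $\EuScript{N}_{\EuScript{X}}$ is the Čech nerve and $B_{\EuScript{X}}$ is geometric realization. First I would check that the functors are well defined. Given $f \colon \EuScript{X} \to \EuScript{Y}$ in $\on{FormMod}_{\EuScript{X}/}$, its Čech nerve $\EuScript{N}_{\EuScript{X}}(\EuScript{Y})_{\bullet}$, with $n$-simplices the iterated fiber product $\EuScript{X} \times_{\EuScript{Y}} \cdots \times_{\EuScript{Y}} \EuScript{X}$, is automatically a groupoid object in prestacks; the content is that each term lies in $\on{FMP}_{/\EuScript{X}}$. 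Inf-schematicity is stable under base change and composition, and the nil-isomorphism condition holds because $f$ is a nil-isomorphism: after passing to reduced prestacks every such fiber product collapses to $\EuScript{X}_{red}$, so all face and projection maps are nil-isomorphisms. Conversely, for a groupoid $\EuScript{G}_{\bullet}$ in $\on{FMP}_{/\EuScript{X}}$ I would set $B_{\EuScript{X}}(\EuScript{G}_{\bullet}) := |\EuScript{G}_{\bullet}|$ and verify that $\EuScript{X} = \EuScript{G}_0 \to |\EuScript{G}_{\bullet}|$ is a nil-isomorphism and that the colimit inherits a deformation theory. The nil-isomorphism claim follows because reduction commutes with colimits and each $\EuScript{G}_n$ is levelwise nil-isomorphic to $\EuScript{X}$, so the reduction of $|\EuScript{G}_{\bullet}|$ is again $\EuScript{X}_{red}$.

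The crux is the unit and counit. The counit demands $|\EuScript{N}_{\EuScript{X}}(\EuScript{Y})_{\bullet}| \simeq \EuScript{Y}$, that is, that a nil-isomorphism be recovered as the realization of its Čech nerve; the unit demands $\EuScript{N}_{\EuScript{X}}(|\EuScript{G}_{\bullet}|) \simeq \EuScript{G}_{\bullet}$, that is, that every formal groupoid be effective. In an $\infty$-topos both are instances of Lurie's theorem that groupoid objects are effective, but $\on{FMP}_{/\EuScript{X}}$ and $\on{FormMod}_{\EuScript{X}/}$ are not toposes, so this cannot be invoked directly. This is precisely where the standing hypotheses on $\EuScript{X}$---convergence, the existence of a pro-cotangent complex, and infinitesimal cohesiveness---must be used, and it is the step I expect to be the main obstacle.

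To surmount it I would linearize both sides by differentiation. By the relative form of the formal moduli problems $\leftrightarrow$ Lie algebras correspondence, one expects an equivalence $\on{FormMod}_{\EuScript{X}/} \simeq \on{LieAlgbd}(\EuScript{X})$ carrying $\EuScript{Y}$ to the relative tangent Lie algebroid $\mathbb{T}_{\EuScript{X}/\EuScript{Y}}$; it is here that convergence and infinitesimal cohesiveness are consumed, ensuring that a formal moduli problem under $\EuScript{X}$ is determined by its infinitesimal data. In parallel, a formal groupoid over $\EuScript{X}$ is differentiated at its unit section to a Lie algebroid, giving $\on{FormGrpoid}(\EuScript{X}) \simeq \on{LieAlgbd}(\EuScript{X})$. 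The last step is to check that these two linearizations agree: the Lie algebroid attached to the nil-isomorphism $f$ should coincide with the one obtained by differentiating its Čech nerve, since both compute the relative tangent complex of $f$. Conservativity of differentiation---again a consequence of the deformation-theoretic hypotheses---then promotes this comparison of infinitesimal data to the asserted equivalence of $\infty$-categories, identifying $B_{\EuScript{X}}$ and $\EuScript{N}_{\EuScript{X}}$ as inverse to one another.
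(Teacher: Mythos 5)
The paper does not actually prove this statement: it is imported as a black box from Gaitsgory--Rozenblyum \cite{gaitsgory2020study}, so there is no in-paper argument to compare yours against. On its own terms, your sketch gets the formal bookkeeping right --- stability of inf-schematic nil-isomorphisms under base change and composition makes the Čech nerve land in $\on{FormGrpoid}(\EuScript{X})$, and you correctly isolate the crux: neither $\on{FormMod}_{\EuScript{X}/}$ nor the category of formal groupoids is an $\infty$-topos, so effectivity of groupoids and the descent statement $|\EuScript{N}_{\EuScript{X}}(\EuScript{Y})_{\bullet}| \simeq \EuScript{Y}$ are exactly what must be proved.

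The gap is in your proposed resolution of that crux. You want to sandwich both sides between an independently defined category $\on{LieAlgbd}(\EuScript{X})$ via ``the relative form of the formal moduli problems $\leftrightarrow$ Lie algebras correspondence.'' In the generality of the theorem (an arbitrary laft prestack $\EuScript{X}$ admitting deformation theory) no such algebraic model is available as an input: in Gaitsgory--Rozenblyum the $\infty$-category of Lie algebroids over $\EuScript{X}$ is \emph{defined} to be $\on{FormMod}_{\EuScript{X}/}$ (equivalently, formal groupoids over $\EuScript{X}$), precisely because the expected ``module with anchor and bracket'' description is not known to hold in this generality; honest algebraic models require further hypotheses and came only later (Nuiten). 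So each of your two linearization equivalences either presupposes the theorem or is a definition, and the argument is circular at the decisive step. The actual proof establishes effectivity directly by a deformation-theoretic induction: one exhausts a formal moduli problem under $\EuScript{X}$ (resp.\ a formal groupoid) by square-zero extensions, uses infinitesimal cohesiveness and the pro-cotangent complex to reduce the unit and counit comparisons to linearized statements on each step, and uses convergence to pass to the limit. Relatedly, you flag but do not address that the realization $|\EuScript{G}_{\bullet}|$ admits deformation theory; colimits of prestacks with deformation theory need not admit one, and this verification is genuinely part of the content rather than a formality.
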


In effect, this states that there is a well-defined procedure of taking the quotient by a formal groupoid to obtain a formal moduli problem under $X$. 

\begin{rem}
 It is sensible to study $E_2$-formal groupoids, and indeed, $E_n$-formal groupoids. As a formal consequence  of Theorem \ref{gaitsgorymain}, there exists an ``iterated" quotient or classifying stack of an $n$-formal groupoid over $\EuScript{X}$; this will give rise to a formal moduli problem under $\EuScript{X}$. 
\end{rem}

\begin{cor}
Let $\EuScript{X}$ be as above. Then there exists an equivalence 
$$
B_{\EuScript{X}}^{(n)}: \on{FormGrpoid}^{(n)}(\EuScript{X}) \leftrightarrows \on{FormMod}_{\EuScript{X}/} : \EuScript{N}^{(n)}_{\EuScript{X}}(-)
$$
Here the left hand side denotes the $\infty$-category of $E_n$-groupoid objects in $\on{FormGrpoid}_{/ \EuScript{X}}$. 
\end{cor}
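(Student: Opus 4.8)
The plan is to argue by induction on $n$, taking the base case $n=1$ to be Theorem \ref{gaitsgorymain} itself, and to reduce the inductive step to a single ``stabilization'' statement that isolates the role of the formal hypotheses. Throughout I write $\on{Grp}(\mathcal{C})$ for group objects in $\mathcal{C}$ based at the initial object (equivalently, Segal groupoid objects whose object of objects is the trivial object $\EuScript{X}$), which is the relative analogue over $\EuScript{X}$ of a group object.

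First I would set up the recursion. By Dunn additivity, $E_n \simeq E_1 \otimes E_{n-1}$, so an $E_n$-groupoid object in $\on{FMP}_{/\EuScript{X}}$ is the same datum as a group object in the $\infty$-category of $E_{n-1}$-groupoid objects; concretely $\on{FormGrpoid}^{(n)}(\EuScript{X}) \simeq \on{Grp}(\on{FormGrpoid}^{(n-1)}(\EuScript{X}))$. The inductive hypothesis supplies an equivalence of $\infty$-categories $B^{(n-1)}_{\EuScript{X}}\colon \on{FormGrpoid}^{(n-1)}(\EuScript{X}) \simeq \on{FormMod}_{\EuScript{X}/}$. Since an equivalence of $\infty$-categories preserves and reflects all limits, and $\on{Grp}(-)$ is cut out by the Segal conditions (finite limits, which both categories possess), it is transported along the equivalence, yielding $\on{FormGrpoid}^{(n)}(\EuScript{X}) \simeq \on{Grp}(\on{FormMod}_{\EuScript{X}/})$. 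The functor $B^{(n)}_{\EuScript{X}}$ is then assembled as the composite of $B_{\EuScript{X}}$ with $B^{(n-1)}_{\EuScript{X}}$ applied levelwise, and $\EuScript{N}^{(n)}_{\EuScript{X}}$ is the corresponding iterated Čech nerve.

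It therefore remains to prove the stabilization equivalence $\on{Grp}(\on{FormMod}_{\EuScript{X}/}) \simeq \on{FormMod}_{\EuScript{X}/}$, realized by the relative delooping $B_{\EuScript{X}}$ and its inverse the relative loop functor $\Omega_{\EuScript{X}}$. Two inputs are used, both already available. First, as emphasized in the introduction, a nil-isomorphism plays the role within formal geometry that an effective epimorphism plays in derived geometry: any $\EuScript{Y} \in \on{FormMod}_{\EuScript{X}/}$ is recovered as the classifying object of the Čech nerve of the nil-isomorphism $\EuScript{X}\to\EuScript{Y}$, so every formal groupoid over $\EuScript{X}$ is effective and $B_{\EuScript{X}}$, $\EuScript{N}_{\EuScript{X}} = \Omega_{\EuScript{X}}$ are mutually inverse. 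Second, and crucially, every $\EuScript{Y} \in \on{FormMod}_{\EuScript{X}/}$ is canonically pointed by its structure map $\EuScript{X}\to\EuScript{Y}$, and the defining requirement that this map be a nil-isomorphism forces $\EuScript{Y}_{red}\simeq\EuScript{X}_{red}$, so $\EuScript{Y}$ is automatically ``connected'' relative to $\EuScript{X}$. Consequently $\Omega_{\EuScript{X}}\colon \on{FormMod}_{\EuScript{X}/} \to \on{Grp}(\on{FormMod}_{\EuScript{X}/})$ is an equivalence, and a group object based at $\EuScript{X}$ deloops back into $\on{FormMod}_{\EuScript{X}/}$.

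The main obstacle is precisely this stabilization step, and it is where the formal/infinitesimal setting is indispensable. For honest higher stacks in an $\infty$-topos the analogous iteration fails: $n$-fold delooping forces $n$-connectivity, so $\on{Grp}^{E_n}$ does not stabilize and one cannot recover the whole category at each stage. In the formal world the automatic connectivity of objects of $\on{FormMod}_{\EuScript{X}/}$ removes this obstruction; equivalently, under Gaitsgory--Rozenblyum/Lurie Lie theory these categories are governed by shifted tangent (Lie algebroid) data valued in a stable $\infty$-category, where delooping acts as an invertible shift. In carrying this out I would still need to verify the bookkeeping that realizations of formal groupoids in $\on{FormMod}_{\EuScript{X}/}$ remain inf-schematic, laft, and nil-isomorphisms over $\EuScript{X}$, so that the iterated constructions stay inside the relevant categories; these are the routine but necessary checks that the hypotheses on $\EuScript{X}$ (deformation theory, laft) are designed to guarantee.
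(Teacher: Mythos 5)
Your induction is essentially the argument the paper intends: the corollary is stated there without proof, as a ``formal consequence'' of Theorem \ref{gaitsgorymain}, obtained by iterating that equivalence exactly as you do. The only remark worth making is that your ``stabilization'' step $\on{Grp}(\on{FormMod}_{\EuScript{X}/}) \simeq \on{FormMod}_{\EuScript{X}/}$ is not really new input beyond the theorem: a Segal groupoid in $\on{FormMod}_{\EuScript{X}/}$ based at the initial object $\EuScript{X}$ has all of its terms simultaneously \emph{over} $\EuScript{X}$ by nil-isomorphisms (the face maps to $\EuScript{X}$ are retractions of nil-isomorphisms, and nil-isomorphisms satisfy the evident two-out-of-three property), so such a groupoid is literally a formal groupoid in the sense of Definition \ref{formalgroupoid}, and the step is Theorem \ref{gaitsgorymain} applied once more --- which is also why your closing list of bookkeeping checks (inf-schematicity, laft) is the only real content left to verify.
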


\subsection{Formal completions} \label{section formal completions}
Given a morphism $\EuScript{X} \to \EuScript{Y}$ there exists a canonical formal moduli problem under $\EuScript{X}$; we may view this as an infinitesimal thickening on $\EuScript{X}$ in $\EuScript{Y}$.  We now briefly review this construction cf. \cite{gaitsgory2020study, calaque2017shifted}. 

\begin{const}
Let $\EuScript{X} \to \EuScript{Y}$ be a morphism of derived stacks. The formal completion of $\EuScript{Y}$ along $\EuScript{X}$ is defined to be the pullback 
$$
 \EuScript{Y}_{\EuScript{X}}^{\wedge} := \EuScript{Y} \times_{\EuScript{Y}_{dR}} \EuScript{X}_{dR} 
$$
The maps $\EuScript{X} \to \EuScript{Y}$ and $\EuScript{X} \to \EuScript{X}_{dR}$ induce a map $\EuScript{X} \to \EuScript{Y}_{\EuScript{X}}^{\wedge}$ which is a nil-isomorphism. Hence this procedure defines an object in $\on{FormMod}_{\EuScript{X}/}$
\end{const}

As a consequence of Theorem \ref{gaitsgorymain}, the canonical morphism $\EuScript{X} \to \EuScript{Y}_{\EuScript{X}}^{\wedge}$  to the infinitesimal thickening in $\EuScript{Y}$ may be recovered as the realization of its Čech nerve. The reader can compare this to the discussion in \cite[Section 4.2]{toensurvey} for a related universal property of the formal completion with respect to maps into arbitrary derived schemes.

\section{Cogroupoid objects} \label{section cogroupoid objects}

In this section we recall some basic facts about groupoid objects in $\infty$-categories, as well as the dual notion of a cogroupoid object, which will play a key role in our constructions.

\begin{defn}
Let $\EuScript{C}$ be an $\infty$-category and let $\on{Fun}(\Delta^{op}, \EuScript{C})$ be the $\infty$-category of simplicial objects in $\EuScript{C}$.  We say $X_\bullet \in \on{Fun}(\Delta^{op}, \EuScript{C})$ is a \emph{groupoid object} of $\EuScript{C}$ if, for every $n \geq 0$, and every partition $[n]= S \cup S'$ , such that $S \cap S'$ consists of a single element $s$, the diagram 
$$
\xymatrix{
& X([n]) \ar[d] \ar[r] &  X(S) \ar[d] \\
 & X(S')  \ar[r] & X(\{s\})
}
$$
is a pullback square in $\EuScript{C}$. 
\end{defn}

\begin{rem}
A group object in $\EuScript{C}$ is a groupoid object  $X_\bullet$  for which $X_0 \simeq *$.
\end{rem}

We have the following dual notion of a cogroupoid. 

\begin{defn}
Let $\EuScript{C}$ be as above, and let $\on{Fun}(\Delta, \EuScript{C})$ denote the $\infty$-category of cosimplicial objects of $\EuScript{C}$. We say $X^\bullet \in \on{Fun}(\Delta, \EuScript{C})$ is a cogroupoid object if it is a groupoid object in the opposite category $\EuScript{C}^{op}$. In particular, for every   partition $[n]= S \cup S'$ , such that $S \cap S'$ consists of a single element $s$, the diagram 
$$
\xymatrix{
& X(\{s\}) \ar[d] \ar[r] &  X(S) \ar[d] \\
 & X(S')  \ar[r] & X([n])
}
$$
is a pushout square in $\EuScript{C}$. We use the notation $\on{coGrpd}(\EuScript{C})$ to denote the $\infty$-category of cogroupoid objects in $\EuScript{C}$. 
\end{defn}

\begin{rem}
A cogroup object in $\EuScript{C}$ is a cogroupoid object  $X_\bullet$  for which $X_0 \simeq \emptyset$.
\end{rem}

\begin{ex}
Let $\EuScript{C} = $ denote the $\infty$-category of pointed spaces. For any pointed space $X \in \mathcal{S}_*$, its suspension  $\Sigma X$ is canonically a cogroup object. To see this, let  $ \pi: X \to *$ be the  map from $X$ to the final object. Then the conerve, $\on{coNerve}(\pi)$, 
$$
* \rightrightarrows * \bigsqcup_{X} * \substack{\rightarrow\\[-1em] \rightarrow \\[-1em] \rightarrow} * \bigsqcup_{X} * \bigsqcup_{X} * ...
$$
precisely packages $\Sigma X \simeq   * \sqcup_{X} * $ together with its cogroup structure maps. Setting $X= S^0$ recovers the $E_1$ cogroup structure on $S^1$ in pointed spaces. 
\end{ex}

\begin{const} \label{conerveconstruction}
The above example is an instance of the conerve construction, which we now describe. Let $\EuScript{C}$ be an $\infty$-category which admits finite colimits. Let $f: Y \to X $  be a morphism in $\EuScript{C}$. Then we define the conerve of $f$, $\on{coNerve}(f)$ to be the cosimplicial object 
$$
\on{coNerve}(f): = X \rightrightarrows X \bigsqcup_{Y} X \substack{\rightarrow\\[-1em] \rightarrow \\[-1em] \rightarrow} X \bigsqcup_{Y} X \bigsqcup_{Y} X ...
$$
\end{const}

\noindent 
One can iteratively define the notion of an $E_n$-cogroupoid:

\begin{defn}
We define an $E_2$-cogroupoid object to be a cogroupoid object in the $\infty$-category $\on{coGrpd}(\EuScript{C})$. Proceeding iteratively, we define an $E_n$-cogroupoid to be an a cogroupoid object in $\on{coGrpd}_{n-1}(\EuScript{C})$. 
\end{defn}

\begin{ex}
Let $\phi: \emptyset \to \pt$ be the map in spaces from the initial object to the final object. Then we set
$$
S^{0, \bullet} := \on{coNerve}(\phi)
$$
Now let $\phi^\bullet: S^{0, \bullet} \to *$ be the map to the final cogroupoid object in spaces. We set 
$$
S^{1, \bullet, \bullet} := \on{coNerve}(\phi^\bullet)
$$
Of course, one need not stop here; for each $n$ there is an $E_{n+1}$ cogroupoid  with the $n$-sphere as the degree $(1,...,1)$ space of morphisms. 
\end{ex}

\section{Affine cogroupoids over $\filstack$} \label{section affine cogroupoids}

Let $R$ be a commutative ring. There exists a distinguished class of (higher) stacks over $R$ which are completely determined by their cohomology (together with its additional yet canonical structure) as a cosimplicial commutative algebra. The study of this class of stacks, known as \emph{affine stacks} was initiated in \cite{toen2006champs}. 

The purpose of this section is to rephrase these constructions internally to the filtered setting, i.e. over the stack $\filstack$. 

\subsection{Filtrations and $\filstack$}
We would like to remind the reader of some basic notions from \cite{geometryoffiltr}, motivating our extensive usage of the stack $\filstack$. 

\begin{defn}
We define the $\infty$-category of filtered $R$-modules to be  
$$
\on{Fil}(\Mod_R) = \on{Fun}(\Z^{op}, \Mod_R), 
$$
where $\Z$ is to be viewed as a poset. Similarly we define the $\infty$-category 
$$
\on{Gr}(\Mod_R) = \on{Fun}(\Z^{ds, op}, \Mod_R)
$$
to be the $\infty$-category of graded $R$-modules, where $\Z^{ds}$ denotes the integers viewed as a discrete space. These both obtain a symmetric monoidal structure, given by Day convolution. 
\end{defn}

\begin{const}
There exist symmetric monoidal functors 
$$
\on{Und}: \on{Fil}(\Mod_R) \to \Mod_{R},
$$
taking a filtered $R$-module to its underlying object, and 
$$
\gr: \on{Fil}(\Mod_R) \to \on{Gr}(\Mod_R)
$$
\end{const}

Now we turn to an algebro-geometric incarnation of these notions. Let $\filstack$ be the quotient stack of $\AAA^1$ by the canonical $\GG_m$ action by dilation. This stack comes equipped with two distinguished maps:
$$
  \Spec k \xrightarrow{\eta} \filstack \xleftarrow{\iota} B \GG_m
$$
which throughout this paper, we refer to as the \emph{generic point} and \emph{central point} respectively. 
Viewing it as a derived stack, one obtains the following relationship with filtered objects at the level of quasi-coherent sheaves:

\begin{thm}[cf. \cite{geometryoffiltr}]
There is a symmetric monoidal equivalence of stable $\infty$-categories
$$
\QCoh(\filstack) \simeq \on{Fil}(\Mod_R)
$$
Under the above equivalence, the associated graded functor  $\on{gr}: \on{Fil}(\Mod_R) \to \on{Gr}(\Mod_R)$ is naturally identified with the pullback
$$
\QCoh(\filstack) \to \QCoh(B \GG_m) \simeq \on{Gr}(\Mod_R); 
$$
the functor $\on{Und}: \Sp^{fil} \to \on{Sp}$ sending a filtered $R$-module to the $R$-module underlying the filtration is naturally identified with the pullback
$$
\QCoh(\filstack) \to \QCoh(\Spec R) \simeq \Mod_R .
$$
\end{thm}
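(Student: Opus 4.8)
The plan is to compute $\QCoh(\filstack)$ directly from the presentation $\filstack = \AAA^1/\GG_m$ and then match the resulting description with filtered modules through the \emph{Rees construction}. First I would use that the projection $p\colon \filstack \to B\GG_m$ collapsing the $\AAA^1$-direction is an affine morphism: the equivariant affine map $\AAA^1 \to \pt$ descends to exhibit $\filstack$ as the relative spectrum over $B\GG_m$ of $\Sym(\mathcal{L})$, where $\mathcal{L}$ is the line bundle on $B\GG_m$ attached to the tautological character. For an affine morphism the adjunction $p^\ast \dashv p_\ast$ is comonadic, so by the theory of relative quasi-coherent sheaves over affine morphisms one obtains a symmetric monoidal equivalence $\QCoh(\filstack) \simeq \Mod_{p_\ast \OO}(\QCoh(B\GG_m))$. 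Since $\QCoh(B\GG_m) \simeq \on{Gr}(\Mod_k)$ and $p_\ast\OO = \Sym(\mathcal{L})$ is the graded polynomial algebra $k[t]$ on a single generator $t$ of nonzero weight (say $1$, depending on the orientation of the $\GG_m$-action), the right-hand side is the $\infty$-category of graded $k[t]$-modules internal to graded $k$-modules.

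Next I would identify graded $k[t]$-modules with $\on{Fil}(\Mod_k)$. A graded $k[t]$-module is a graded object $M = (M_n)_{n \in \Z}$ equipped with multiplication maps $t\colon M_n \to M_{n+1}$, and this datum is exactly a functor $\Z \to \Mod_k$, i.e. a filtered module whose transition maps record multiplication by $t$; conversely the Rees construction sends $F^\bullet$ to $\bigoplus_n F^n$ with its tautological $k[t]$-action. I would upgrade this to an equivalence of $\infty$-categories by presenting $\on{Fun}(\Z, \Mod_k)$ as modules over the monoid algebra of $(\Z_{\geq 0}, +)$ in $\on{Fun}(\Z^{ds}, \Mod_k) = \on{Gr}(\Mod_k)$, this monoid algebra being precisely the graded ring $k[t]$. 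The compatibility to check here is that the Day convolution symmetric monoidal structure on $\on{Fil}(\Mod_k)$, induced by addition on the poset $\Z$, goes over to the relative tensor product $-\otimes_{k[t]}-$ of graded modules, which is the structure transported from $\QCoh(\filstack)$ in the first step; matching units ($k[t]$ as a module over itself corresponds to the constant filtration on $k$) and this tensor is where most of the bookkeeping lives.

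Finally, to identify the two functors I would base change along the two strata of $\filstack$. The central point $\iota\colon B\GG_m \to \filstack$ is the inclusion of the zero section $\{0\}/\GG_m$, so $\iota^\ast$ is base change along $k[t] \to k[t]/(t) = k$; on a graded $k[t]$-module, $-\otimes_{k[t]} k = \on{cofib}(t)$ returns in weight $n$ the cofiber of $t\colon M_{n-1}\to M_n$, which under the Rees dictionary is $\on{gr}^n(F^\bullet)$. Hence $\iota^\ast$ is the associated graded functor $\on{gr}\colon \on{Fil}(\Mod_k)\to \on{Gr}(\Mod_k)$. Dually, the generic point $\eta\colon \Spec k \to \filstack$ is the open complement $(\AAA^1\setminus 0)/\GG_m \simeq \Spec k$ (the free $\GG_m$-quotient of $\GG_m$), so $\eta^\ast$ is base change along $k[t]\to k[t,t^{-1}]$ followed by the forgetting of the now-trivialized grading; this inverts all transition maps of $F^\bullet$ and returns their common $\colim$, the underlying $k$-module. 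Thus $\eta^\ast$ is identified with $\on{Und}$, as claimed.

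The main obstacle I anticipate is not the point-set identifications above but their coherent $\infty$-categorical upgrade, together with symmetric monoidality: one must produce $\QCoh(\filstack)\simeq \on{Fil}(\Mod_k)$ as a symmetric monoidal equivalence of presentable $\infty$-categories, not merely an equivalence of underlying categories, and verify that Day convolution and the relative tensor product agree as symmetric monoidal structures with their units. Since this monoidal refinement is established in \cite{geometryoffiltr}, I would cite that reference for the coherence and confine the original argument to the affineness-of-$p$ reduction and the stratum-wise identifications of $\on{gr}$ and $\on{Und}$, which are the only parts genuinely needed in the sequel.
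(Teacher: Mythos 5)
Your argument is correct and is essentially the proof of the cited reference: the paper itself states this result with a citation to \cite{geometryoffiltr} and gives no proof, and that reference proceeds exactly via affineness of $\filstack \to B\GG_m$, the identification of $\QCoh(\filstack)$ with graded $k[t]$-modules, the Rees equivalence with $\on{Fun}(\Z^{op},\Mod_k)$ under Day convolution, and base change along the two strata to identify $\iota^*$ with $\gr$ and $\eta^*$ with $\on{Und}$. The only point to watch is the orientation convention ($\on{Fun}(\Z^{op},-)$ versus $\on{Fun}(\Z,-)$ and the weight of $t$), which you already flag, and the symmetric monoidal coherence, which you correctly defer to the reference.
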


\begin{rem}
Given the above equivalence, we may view (derived) stacks admitting a map  
$$
\EuScript{X} \to \filstack 
$$
in two ways. First, we may think of them as degenerations, from the \emph{generic fiber}  $\EuScript{X}_1 := \eta^*(\EuScript{X})$  to the \emph{special fiber} $\EuScript{X}_0:= \iota^*(\EuScript{X})$. We may also think of them as filtrations on the cohomology $R\Gamma(\EuScript{X}_{1}, \OO_{\EuScript{X}_1})$ of the generic fiber. In this paper, we straddle the line between these two perspectives. 
\end{rem}
\subsection{Affine stacks over $\filstack$} 
\begin{defn}
Let $\QCoh(\filstack)^{\heartsuit}$ denote the heart of the stable $\infty$-category of quasi-coherent sheaves. By \cite[Section 8]{geometryoffiltr}, this abelian category is equivalent to the filtered diagrams of objects belonging to the heart of $\Mod_R$. Let $\on{CAlg}(\QCoh(\filstack))^{\heartsuit}$ denote the category of algebras in  $\QCoh(\filstack)^{\heartsuit}$.  We set 
$$
\mathsf{coSCR}_{\filstack} := \mathsf{coSCR}(\on{CAlg}(\QCoh(\filstack)^{\heartsuit}))
$$
to be the underlying $\infty$-category of the category of cosimplicial objects in $\on{CAlg}((\QCoh(\filstack)^{\heartsuit}))$. 
\end{defn}

\begin{rem}
There is an alternative way to understand the  $\infty$-category $\mathsf{coSCR}_{\filstack}$. Let \\
$\QCoh(\AAA^1)^\heartsuit \simeq \Mod_{k[t]}$ denote the classical abelian category of $R[t]$-modules. The canonical inclusion $\GG_m \to \AAA^1$ gives an action of $\GG_m$ on $\AAA^1$, and thus we may study $\GG_m$-equivariant objects in the above abelian category. This forms an abelian category, which we denote by $\on{Rep}_{\AAA^1}(\GG_m)$
and we can canonically form the category of cosimplicial modules in this category, $\on{Rep}_{\AAA^1}(\GG_m)^{\Delta}$. This admits a symmetric monoidal structure (the point-wise one) and so we may take commutative monoids here. By the arguments of \cite[Section 3.4]{katzarkov2009algebraic}, this forms a model category and we can define $\mathsf{coSCR}_{\filstack}$ to be its underlying $\infty$-category.
\end{rem}

\begin{rem}
Let $ \Mod^{\on{ccn}}_{R}$ denote the full subcategory of $\Mod_R$  consisting of modules $M$ for which $\pi_n(M) =0$ if $n >0$. We remark that there exists a monad $T$ on $\Mod^{\on{ccn}}_{R}$ for which there is an equivalence 
$$
\Alg_{T} \simeq \mathsf{coSCR}_R
$$
This can be extracted from \cite[Proof of Théorème 2.1.2]{toen2006champs}.
When $R$ is a $\mathbb{Q}$-algebra this agrees with the free $E_\infty$-algebra functor, so that in fact we have an equivalence 
$$
\mathsf{coSCR}_R \simeq \on{CAlg}^{\on{ccn}}_{R};
$$
thus one may think of cosimplicial commutative algebras as coconnective $E_\infty$-algebras in this context. Similarly, when working over $\filstack$ in characteristic zero, there will be an equivalence 
$$
\mathsf{coSCR}_{\filstack} \simeq \on{CAlg}^{\on{ccn}}( \QCoh(\filstack))
$$
\end{rem}

\begin{const}

Let $\Spec_{\filstack}: \on{CAlg}(\QCoh(\filstack)^{\heartsuit})^{op} \to \on{Stk}_{\filstack}$, denote the relative spectrum functor, sending a commutative algebra in $\QCoh(\filstack)^{\heartsuit}$ to a relative affine (underived) scheme over $\filstack$. We form the  Kan-extension of this functor along the inclusion 
$$
\on{CAlg}(\QCoh(\filstack)^{\heartsuit}) \to \mathsf{coSCR}(\QCoh(\filstack)^{\heartsuit})
$$ to obtain a functor 
$$
\Spec_{\filstack}^{\Delta}: \mathsf{coSCR}_{\filstack}^{op} \to \on{Stk}_{\filstack}. 
$$
\end{const}

\begin{rem}
This possesses a left adjoint, namely the global sections functor 
$$
\OO: \on{Stk}_{\filstack} \to \mathsf{coSCR}_{\filstack}^{op}.
$$
To understand this functor, we remark that one can work with a point-set model of $\on{Stk}_{\filstack}$ where every object $\EuScript{F}$ in $\on{Stk}_{\filstack}$ has a a model as a simplicial object $\{\EuScript{F}_\bullet\}$ in presheaves (of sets) over the stack $\filstack$. Thus we define the cosimplicial algebra  $\OO(\EuScript{F})^\bullet$
$$
\OO(\EuScript{F})^{n} := \OO(\EuScript{F}_n) 
$$
As in \cite[Section 2.2]{toen2006champs}, this functor is left Quillen and thus induces a functor at the level of $\infty$-categories. 
\end{rem}

\begin{prop}
The functor 
$$
\Spec^{\Delta}_{\filstack}: \mathsf{coSCR}_{\filstack}^{op} \to \on{Stk}_{\filstack}
$$
is fully faithful. 
\end{prop}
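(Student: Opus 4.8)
The plan is to establish full faithfulness by exhibiting $\Spec^\Delta_{\filstack}$ as the right adjoint in an adjunction whose unit is an equivalence on the relevant subcategory, and then leveraging the affine-stack machinery of To\"en over the base $\filstack$. First I would recall from the preceding Remark that $\Spec^\Delta_{\filstack}$ admits the left adjoint $\OO: \on{Stk}_{\filstack} \to \mathsf{coSCR}_{\filstack}^{op}$ given by global sections. Full faithfulness of the right adjoint is equivalent to the counit $\OO \circ \Spec^\Delta_{\filstack} \Rightarrow \id$ being an equivalence (working in the opposite category, so one must track the direction carefully). Concretely, for a cosimplicial algebra $A^\bullet \in \mathsf{coSCR}_{\filstack}$, I would show that the natural map
$$
A^\bullet \xrightarrow{\ \sim\ } \OO\bigl(\Spec^\Delta_{\filstack}(A^\bullet)\bigr)
$$
is an equivalence, i.e. that global sections recovers the defining cosimplicial algebra.

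The key structural input is that over an affine base this is precisely the content of \cite[Th\'eor\`eme 2.2.9]{toen2006champs}: the functor $\Spec^\Delta$ from cosimplicial algebras to affine stacks is fully faithful, and in fact the essential image (the affine stacks) is a reflective localization. I would reduce the relative statement over $\filstack$ to this absolute one. The natural mechanism is the description in the Remark identifying $\mathsf{coSCR}_{\filstack}$ with cosimplicial commutative algebra objects in $\on{Rep}_{\AAA^1}(\GG_m)$, equivalently with $\GG_m$-equivariant cosimplicial $k[t]$-modules. Since $\filstack = \AAA^1/\GG_m$, the relative $\Spec^\Delta_{\filstack}$ factors as an equivariant, $\AAA^1$-linear version of the absolute construction. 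I would therefore argue that full faithfulness descends along the atlas $\AAA^1 \to \filstack$: the counit map is checked after pullback to $\AAA^1$ together with its $\GG_m$-equivariance data, where it becomes the $k[t]$-linear analog of To\"en's statement. Because $\QCoh(\filstack)^\heartsuit \simeq \on{Rep}_{\AAA^1}(\GG_m)$ is an honest abelian category with enough structure (a symmetric monoidal point-wise tensor product, and the model-category presentation cited from \cite{katzarkov2009algebraic}), To\"en's proof — which relies only on formal properties of the free/forgetful adjunction producing cosimplicial algebras, cohomological computations of $\Spec^\Delta$ of a free algebra, and the conservativity of the cohomology functors — transports verbatim to this relative setting.

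The main obstacle I anticipate is the cohomological computation underlying To\"en's argument: one must verify that the cohomology (global sections) of $\Spec^\Delta_{\filstack}$ applied to a \emph{free} cosimplicial algebra on a flat object of $\QCoh(\filstack)^\heartsuit$ recovers that free algebra, with no higher cohomology obstructing the comparison. In the absolute case this uses that affine space $\AAA^n_k$ has no higher cohomology and that Eilenberg--MacLane-type affine stacks $K(\GG_a, n)$ have computable, predictable cohomology. Over $\filstack$ the corresponding claim requires controlling $\GG_m$-equivariant cohomology of the relative affine building blocks, i.e. checking that the relevant $R\Gamma$ over the stacky base remains concentrated in the expected (co)degrees and that passing to associated graded or underlying objects via the pullbacks $\iota^*, \eta^*$ does not destroy the comparison. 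Once this flat-generation-plus-cohomology-vanishing input is secured, the rest is formal: free objects generate $\mathsf{coSCR}_{\filstack}$ under sifted colimits (by the monadic presentation recalled in the Remark), both $\OO$ and $\Spec^\Delta_{\filstack}$ interact correctly with these colimits, and so the counit is an equivalence on all of $\mathsf{coSCR}_{\filstack}$, yielding full faithfulness.
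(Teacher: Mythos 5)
Your overall skeleton agrees with the paper's: both reduce full faithfulness to showing the (co)unit $A^\bullet \to \OO\bigl(\Spec^{\Delta}_{\filstack}(A^\bullet)\bigr)$ is an equivalence, check it on a class of generators, and extend using the fact that $\mathsf{coSCR}_{\filstack}$ is generated under cosifted limits and that both functors respect these. Where you diverge is the choice of generators and hence the base case. The paper uses the tautological presentation $A^\bullet \simeq \lim_{\Delta} A^n$ by the \emph{discrete} algebras $A^n \in \on{CAlg}(\QCoh(\filstack)^{\heartsuit})$: since $\Spec^{\Delta}_{\filstack}$ is by definition the left Kan extension from discrete algebras, it sends this cosifted limit to $\colim_{\Delta^{op}} \Spec_{\filstack}(A^n)$; since $\OO$ is the left adjoint it carries that colimit back to $\lim_{\Delta}\OO(\Spec_{\filstack}(A^n))$; and on discrete algebras $\OO \circ \Spec_{\filstack} \simeq \id$ is just classical relative Spec over $\filstack$, with no cohomology to compute. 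You instead propose checking the counit on \emph{free} cosimplicial algebras and importing To\"en's absolute Th\'eor\`eme via descent along the atlas $\AAA^1 \to \filstack$. That can be made to work, but it is strictly heavier: the cohomological vanishing for free algebras that you correctly flag as ``the main obstacle'' is a real verification you leave open (it amounts to controlling the relative cohomology of filtered $K(\GG_a,n)$'s), and the descent step itself needs care, since $\OO$ of a realization is a totalization and the pullback $\eta^*$ (a filtered colimit on $\on{Fil}(\Mod_k)$) does not obviously commute with such infinite limits without a coconnectivity/convergence argument. The paper's choice of generators sidesteps both issues entirely, so the obstacle you anticipate simply does not arise on its route; if you pursue your version, the free-algebra cohomology computation is the step you must actually supply rather than defer.
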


\begin{proof}
Note that we may write any cosimplicial algebra $A^{\bullet}$ as the cosifted limit of objects in $\on{CAlg}_R$, i.e. of discrete $R$-algebras.  Then it follows that
$$
A^{\bullet} \simeq \lim_{\Delta} A^n \simeq \OO (\colim_{\Delta^{op}}[\Spec^{\Delta}_{\filstack}(A_n )]) \simeq \OO \Spec^{\Delta}_{\filstack}(A^{\bullet})
$$
is an equivalence. 
\end{proof}

The following proposition, motivated by \cite[Th\'{e}or\`{e}me 2.29, Corollaire 2.2.10]{toen2006champs}, says that modulo potential size issues, the $\infty$-category of (filtered) affine stacks behaves like a localization of the $\infty$-category of higher stacks. 

\begin{prop}
Let $\on{Stk}_{\filstack}$ denote the subcategory of stacks such that $\OO(X)$ is $\mathbb{U}$-small with respect to a fixed universe $\mathbb{U}$. Then the functor 
$$
X \mapsto \on{Aff}(X)= \Spec_{\filstack}^{\Delta} \OO(X) 
$$ 
is a localization. Moreover, $X$ is an affine stack over $\filstack$ if and only if the adjunction morphism
$$
X \to \on{Aff}(X)  
$$
is an equivalence in $\on{Stk}_{\filstack}$.
\end{prop}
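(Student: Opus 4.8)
The plan is to recognize the pair $(\OO, \Spec^{\Delta}_{\filstack})$ as an adjunction whose right adjoint is fully faithful, and then to invoke the standard $\infty$-categorical dictionary between such adjunctions and localizations (cf.\ \cite[Section 5.2.7]{lurie2009higher}). Concretely, the remark preceding the statement supplies the adjunction $\OO \dashv \Spec^{\Delta}_{\filstack}$, while the immediately preceding proposition establishes that $\Spec^{\Delta}_{\filstack}$ is fully faithful. First I would record that, in this situation, the composite $\on{Aff} = \Spec^{\Delta}_{\filstack} \circ \OO$ together with the unit transformation $\eta \colon \id \to \on{Aff}$ of the adjunction satisfies the hypotheses of \cite[Proposition 5.2.7.4]{lurie2009higher}: a fully faithful right adjoint exhibits $\mathsf{coSCR}_{\filstack}^{op}$ as a reflective subcategory of $\on{Stk}_{\filstack}$, and $\on{Aff}$ is the associated reflector. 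In particular $\on{Aff}$ is idempotent, both $\on{Aff}(\eta_X)$ and $\eta_{\on{Aff}(X)}$ are equivalences, and $\on{Aff}$ presents the affine stacks as the localization of $\on{Stk}_{\filstack}$ at the class of maps $f$ for which $\OO(f)$ is an equivalence. This gives the first assertion.

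For the characterization, I would appeal to the general fact that the local objects of a reflective localization are precisely those lying in the essential image of the fully faithful right adjoint, and that these are detected by the unit. Since, following \cite{toen2006champs}, an affine stack over $\filstack$ is by definition an object in the essential image of $\Spec^{\Delta}_{\filstack}$, it suffices to show that $X$ lies in this image if and only if $\eta_X \colon X \to \on{Aff}(X)$ is an equivalence. One direction is immediate, as an equivalence $X \simeq \on{Aff}(X) = \Spec^{\Delta}_{\filstack}\OO(X)$ exhibits $X$ in the image. For the converse, the full faithfulness proven in the previous proposition says exactly that the counit $\epsilon \colon \OO \Spec^{\Delta}_{\filstack} \to \id$ is an equivalence; then for $X = \Spec^{\Delta}_{\filstack}(A)$ the triangle identity $\Spec^{\Delta}_{\filstack}(\epsilon_A) \circ \eta_{\Spec^{\Delta}_{\filstack}(A)} = \id$ forces $\eta_X$ to be an equivalence.

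The only genuinely delicate point is the set-theoretic bookkeeping encoded in the $\mathbb{U}$-smallness hypothesis, and this is where I expect to spend the real care. I would verify that $\on{Aff}$ restricts to an endofunctor of the subcategory cut out by the condition that $\OO(X)$ be $\mathbb{U}$-small: since $\on{Aff}(X) = \Spec^{\Delta}_{\filstack}\OO(X)$ has global sections computed by the counit equivalence $\OO(\on{Aff}(X)) \simeq \OO(X)$, the construction does not escape the fixed universe, so the localization-theoretic machinery applies internally to $\on{Stk}_{\filstack}$ as defined. With this universe control established, everything else is a formal consequence of the adjunction together with the reflective-localization criterion; the substantive input is precisely the full faithfulness from the preceding proposition, which is what powers the reflectivity here.
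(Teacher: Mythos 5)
Your proposal is correct and takes essentially the same approach as the paper: the paper's proof is a one-line assertion that the argument of To\"en's Th\'eor\`eme 2.2.9 carries over verbatim to $\mathsf{coSCR}_{\filstack}$, and that argument is precisely the adjunction-with-fully-faithful-right-adjoint reflective-localization formalism you spell out, powered by the full faithfulness of $\Spec^{\Delta}_{\filstack}$ established in the preceding proposition. Your explicit handling of the universe bookkeeping via $\OO(\on{Aff}(X)) \simeq \OO(X)$ is a detail the paper leaves implicit, but it is the same underlying argument.
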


\begin{proof}
The arguments for the proof of \cite[Théorème 2.29]{toen2006champs} work verbatim when working with  $\mathsf{coSCR}_{\filstack}$ instead of $\mathsf{coSCR}_{R}$.

\end{proof}
\section{Filtered $n$-spheres and (higher) loop spaces} \label{cogroupoidcircle}
In \cite{moulinos2019universal}, the authors constructed the  \emph{filtered circle}, as an affine abelian group stack over $\filstack$. This roughly packages the data arising from the Postnikov filtration on the cochain complex on $S^1$, together with its compatibility with the group strucure on $S^1$. We will now see that it fits into the larger structure of an $E_2$-cogroupoid object in affine stacks.   

\begin{defn}
Following the discussion in  \cite[Section 1.3]{toen2020classes} we define  the ``quantum point" as the stack $\mathcal{Q}:= B \GG_m$, viewed as an object over $\filstack$ via the morphism $\iota: B \GG_m \to \filstack$. 
This will have generic fiber the ``null scheme"
$$
\mathcal{Q}_1 = \mathcal{Q} \times_{\filstack} \Spec R = \emptyset 
$$
and as central fiber the derived scheme  
$$
\mathcal{Q}_0 = \mathcal{Q} \times_{\filstack} B \GG_m = \Spec(R \oplus R[1](-1))
$$
\end{defn} 

\begin{rem}
This can alternatively be described as $\Spec_{\filstack}(\AAA)$ (this denotes the relative spectrum) where $\AAA$ is the image of the unit $\mathbf{1} \in \on{Gr}_R \simeq \QCoh(B \GG_m)$ in $ \QCoh(\filstack) \simeq \on{Fil}_R$ along the pushforward functor  
$$
\iota_*: \QCoh(B \GG_m) \to \QCoh(\filstack)
$$
This is a lax symmetric monoidal (in fact, it ends up being symmetric monoidal), so that $\AAA$ acquires an $E_\infty$-algebra structure.  
Alternatively, one may describe this functor as the symmetric monoidal functor 
$I: \on{Gr}_R \to \on{Fil}_R$ 
given by left Kan extension along $\Z^{ds} \hookrightarrow \Z$. 
\end{rem}

We now use $\mathcal{Q}$ to construct a cogroupoid object in the $\infty$-topos  $\on{Stk}_{\filstack}$.

\begin{const} \label{construction of filtered zero sphere}
Let $\iota: B \GG_m \simeq \mathcal{Q} \to \filstack$. Let 
$$
\phi: \OO_{\filstack} \to \iota_*(\OO_{B \GG_m}) 
$$ 
be the unit map of commutative algebra objects in $$\on{CAlg}(\QCoh(\filstack)) \simeq \on{CAlg}(\on{Fil}(\Mod_R)).
$$ Finally, let $N(\phi)_\bullet$ be the nerve of this map, viewed as a simplicial object in this $\infty$-category; by construction this will be a groupoid object in $\on{CAlg}(\on{Fil}(\Mod_R))$. We note that this is levelwise discrete, in the homotopical sense. We define 
$$
S^{0, \bullet}_{fil} = \Spec(N(\phi)_\bullet);
$$
this will be a cogroupoid object in the $\infty$-category of derived affine schemes over $\filstack$.  
\end{const}

\begin{rem}
In simplicial degree $1$, one recovers the filtered stack (cf. \cite[Section 5.1]{filteredformal}) $S^0_{fil}$, which we refer to as the \emph{filtered zero sphere}. As is described in loc. cit., 
one may express the fiber product  $\OO_{\filstack} \times_{\iota_*(\OO_{B \GG_m})} \OO_{\filstack}$ in terms of the equivalence of $\QCoh(\filstack)$ with $k[t]$-modules in graded complexes as  the discrete ring 
$$
k[t_1, t_2]/(t_1 +t_2)(t_1 - t_2).
$$
Since the $\eta^* \iota_*(\OO_{B \GG_m}) \simeq 0$,  one has equivalences 
$$
S^0_{fil}|_{\Spec(k)} \simeq S^0 \simeq \Spec(k) \sqcup \Spec(k)
$$
and 
$$
S^0_{fil}|_{B \GG_m} \simeq \Spec(k[\epsilon]/(\epsilon^2))
$$
where in the latter equivalence, we view  $k[\epsilon]/(\epsilon^2)$ as a graded commutative ring with $\epsilon$ in weight $-1$. 
\end{rem}

\begin{rem}
One may try to define this cogroupoid directly as the conerve of the map $\iota: \mathcal{Q} \to \filstack$, which we write as
$\on{coNerve}^{\bullet}(\iota ) $. We remark that this is related by affinization in the sense that there is an equivalence
$$
S^{0,\bullet}_{fil} \simeq \on{Aff}_{\filstack}(\on{coNerve}^{\bullet}(\iota ) ) 
$$
Note that in degree 1 of this construction, we obtain the suspension $\Sigma \mathcal{Q}$ in the $\infty$-category of derived stacks over $\filstack$ whereas for $S_{fil}^{0, \bullet}$, we obtain the suspension  $\Sigma_{aff} \mathcal{Q}$ in the full subcategory of \emph{affine stacks}. These are not in general equivalent. Indeed, by \cite{toen2006champs}, the inclusion of affine stacks into all stacks is a right adjoint functor, which typically does not preserve colimits.      
\end{rem}

\begin{const}
Given any derived stack $X \in \on{dStk}_{\filstack}$, we may form the levelwise mapping stack
$$
\Map_{\filstack}(S^{0, \bullet}_{fil}, X);
$$
this is a simplicial object in $\on{dStk}_{\filstack}$ which by construction of $S^{0, \bullet}_{fil}$, admits the structure of a groupoid object in derived stacks over $\filstack$. Note that we are working in an $\infty$-topos, where every groupoid object $\mathcal{G}_\bullet $ arises as the Čech nerve of an effective epimorphism. Hence, we may form the associated classifying stack $B \mathcal{G}_\bullet$, for which $\mathcal{G}_0 \to B \mathcal{G}_\bullet$ is an effective epimorphism. 
\end{const}

\subsection{The filtered circle as  $E_2$-cogroupoid}
One can give a construction of the filtered circle of \cite{moulinos2019universal} as a $2$-cogroupoid in the $\infty$-category of  affine stacks. This is done by taking the iterated nerve of the map $\phi: \OO_{\filstack} \to 0_*(\OO_{B \GG_m})$ in cosimplicial commutative algebra objects in $\QCoh(\filstack)$. 

\begin{const} \label{filteredcirclecogroupoid}
Let $N(\phi)_\bullet$ be as above. We define the $2$-fold iterated nerve of $\phi$ to be the bi-simplicial object $N(\phi)_\bullet$, defined to be the nerve of the map of simplicial objects 
$$
\OO_{\filstack} \to N(\phi)_\bullet,
$$
where the left-hand side is considered as a constant simplicial object. 
This will canonically give rise to a bisimplicial object in cosimplicial algebras which we denote by $N(\phi)_{\bullet, \bullet}$.  This gives rise to a 2-groupoid object in the $\infty$-category $\mathsf{coSCR}_R$. We can alternatively restrict to the diagonal simplicial object, at this stage. In any case we set 
$$
S^{1,\bullet, \bullet}_{fil}:=\Spec^{\Delta}_{\filstack}(N(\phi)_{\bullet, \bullet})
$$
This will be a 2-cogroupoid in the $\infty$-category of affine stacks over $\filstack$. 
\end{const}

\begin{const} \label{version2}
We introduce a variant of the above. 
Let $S^{0, \bullet}_{fil}$ be as above, and let 
$$
\phi: S^{0, \bullet}_{fil} \to *
$$
be a map in the $\infty$-category of cosimplicial objects in derived stacks. Here, the right hand side is taken to be the constant cosimplicial object which is $\filstack$ in each degree. 
$$
\Sigma S^{0}_{fil} :=\on{coNerve}(\phi)
$$
denote the conerve of this map, cf. construction \ref{conerveconstruction}
This will be a bicosimplicial object, and by the discussion above, a cogroupoid object in $\on{Stk}_R$. 
\end{const}

\begin{prop}
The cogroupoid object of Construction \ref{filteredcirclecogroupoid} is the affinization of $\Sigma S^{0}_{fil}$, i.e. there exists an equivalence 
$$
 S^{1, \bullet, \bullet}_{fil} \simeq \on{Aff}_{\filstack}(\Sigma S^{0}_{fil})
$$
\end{prop}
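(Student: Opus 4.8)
The plan is to reduce the statement to a computation of global sections, using two facts established above: that the relative spectrum $\Spec^{\Delta}_{\filstack}$ is fully faithful, and that its left adjoint $\OO$ preserves colimits. By definition $\on{Aff}_{\filstack} = \Spec^{\Delta}_{\filstack} \circ \OO$, while $S^{1,\bullet,\bullet}_{fil} = \Spec^{\Delta}_{\filstack}(N(\phi)_{\bullet,\bullet})$ by Construction \ref{filteredcirclecogroupoid}. Since both sides of the desired equivalence thus lie in the image of the fully faithful functor $\Spec^{\Delta}_{\filstack}$, it suffices to produce an equivalence $\OO(\Sigma S^{0}_{fil}) \simeq N(\phi)_{\bullet,\bullet}$ of bi-indexed objects of $\mathsf{coSCR}_{\filstack}$, compatible with all structure maps in both directions.

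Next I would unwind the conerve. Recall from Construction \ref{version2} that $\Sigma S^{0}_{fil} = \on{coNerve}(S^{0,\bullet}_{fil} \to \filstack)$, so that in the $n$-th degree of the conerve direction it is the iterated pushout $\filstack \sqcup_{S^{0,\bullet}_{fil}} \cdots \sqcup_{S^{0,\bullet}_{fil}} \filstack$ of $(n+1)$ copies of $\filstack$ amalgamated along $n$ copies of $S^{0,\bullet}_{fil}$ (understood levelwise in the intrinsic cosimplicial direction). Because $\OO$ is a left adjoint, it sends these colimits of stacks to the corresponding limits in $\mathsf{coSCR}_{\filstack}$; concretely, $\OO$ carries the conerve of $S^{0,\bullet}_{fil} \to \filstack$ to the nerve of the induced map $\OO(\filstack) \to \OO(S^{0,\bullet}_{fil})$, with the $n$-th term becoming the iterated fiber product $\OO(\filstack) \times_{\OO(S^{0,\bullet}_{fil})} \cdots \times_{\OO(S^{0,\bullet}_{fil})} \OO(\filstack)$.

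It then remains to identify the inputs of this nerve. On the one hand $\OO(\filstack) \simeq \OO_{\filstack}$ is the unit of $\mathsf{coSCR}_{\filstack}$. On the other hand, $S^{0,\bullet}_{fil} = \Spec(N(\phi)_\bullet)$ is affine by Construction \ref{construction of filtered zero sphere}, so the full faithfulness of $\Spec^{\Delta}_{\filstack}$ gives $\OO(S^{0,\bullet}_{fil}) \simeq N(\phi)_\bullet$, and the map induced by $S^{0,\bullet}_{fil} \to \filstack$ is identified with the unit $\OO_{\filstack} \to N(\phi)_\bullet$. Hence $\OO(\Sigma S^{0}_{fil})$ is precisely the nerve of $\OO_{\filstack} \to N(\phi)_\bullet$, which is $N(\phi)_{\bullet,\bullet}$ by Construction \ref{filteredcirclecogroupoid}, completing the reduction.

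The main obstacle I anticipate is coherence rather than any single computation: one must check that the conerve-to-nerve identification holds as an equivalence of the entire bicosimplicial diagram, compatibly with the cosimplicial direction intrinsic to $S^{0,\bullet}_{fil}$ and not merely degreewise in the conerve direction. This coherence is what genuinely uses that $\OO$ is applied to the whole cogroupoid object and preserves the relevant colimits as a functor, so that the two cosimplicial directions — the one produced by the suspension and the one already present in $S^{0,\bullet}_{fil}$ — are handled simultaneously. A minor secondary point is to confirm that the base $\filstack$ indeed has $\OO(\filstack) \simeq \OO_{\filstack}$ the unit algebra, so that the amalgamation is taken along the correct structure maps.
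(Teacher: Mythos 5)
Your proof is correct and follows essentially the same route as the paper: the paper's argument is precisely that applying the global sections functor $\OO(-)$ to Construction \ref{version2} recovers the bisimplicial object $N(\phi)_{\bullet,\bullet}$ of Construction \ref{filteredcirclecogroupoid}, so that applying $\Spec^{\Delta}_{\filstack}$ identifies $\on{Aff}_{\filstack}(\Sigma S^{0}_{fil})$ with $S^{1,\bullet,\bullet}_{fil}$. You simply make explicit the two inputs the paper leaves implicit, namely that $\OO$, being a left adjoint, carries the iterated pushouts of the conerve to the iterated fiber products of the nerve, and that $\OO(S^{0,\bullet}_{fil}) \simeq N(\phi)_{\bullet}$ by full faithfulness of $\Spec^{\Delta}_{\filstack}$.
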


\begin{proof}
This is immediate upon noticing that there is a natural map 
$$
\Sigma S^{0}_{fil} \to S^{1, \bullet, \bullet}_{fil}
$$ 
which is an equivalence on cohomology. 
Indeed, one obtains the $E_2$-groupoid object of Construction \ref{filteredcirclecogroupoid} by applying the global sections $\OO(-)$ functor to Construction \ref{version2}. Applying then $\Spec^{\Delta}_{\filstack}(-)$  and noticing that we obtain the canonical affinization morphism,  the unit of the adjunction $\Spec^{\Delta}_{\filstack} \dashv  \OO(-)$. This will be an equivalence on cohomologies. 
\end{proof}

\noindent 
Via this description, we see that the degree $(1,1)$ piece of the $E_2$-cogroupoid  $S^{1, \bullet, \bullet}_{fil}$ is none other than the filtered circle of \cite{moulinos2019universal}. 

\begin{cor} \label{recoveryoffilteredcircle}
There is an equivalence: 
$$
S^1_{fil} \simeq S^{1,1,1}_{fil}
$$
of affine stacks. 
\end{cor}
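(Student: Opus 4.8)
The plan is to obtain the statement directly from the affinization equivalence $S^{1,\bullet,\bullet}_{fil} \simeq \on{Aff}_{\filstack}(\Sigma S^0_{fil})$ of the preceding Proposition, by evaluating both sides in bidegree $(1,1)$ and then reconciling the result with the filtered circle of \cite{moulinos2019universal}.

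First I would unwind the bicosimplicial stack $\Sigma S^0_{fil} = \on{coNerve}(\phi)$ of Construction \ref{version2} in bidegree $(1,1)$, where $\phi \colon S^{0,\bullet}_{fil} \to *$. By the conerve formula of Construction \ref{conerveconstruction}, the degree-$1$ term of $\on{coNerve}(\phi)$ in the outer (suspension) direction is the pushout $* \sqcup_{S^{0,\bullet}_{fil}} *$, where $*$ denotes the constant cosimplicial object with value $\filstack$. As this pushout of cosimplicial stacks is computed levelwise in the inner direction, restricting the inner coordinate to degree $1$ amounts to replacing $S^{0,\bullet}_{fil}$ by its degree-$1$ term, which by the Remark following Construction \ref{construction of filtered zero sphere} is precisely the filtered zero sphere $S^0_{fil}$. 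Hence the $(1,1)$ term of $\Sigma S^0_{fil}$ is the relative suspension
$$
\Sigma_{\filstack} S^0_{fil} = \filstack \sqcup_{S^0_{fil}} \filstack ,
$$
and since affinization is performed levelwise, the preceding Proposition yields $S^{1,1,1}_{fil} \simeq \on{Aff}_{\filstack}(\Sigma_{\filstack} S^0_{fil})$.

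It then remains to identify $\on{Aff}_{\filstack}(\Sigma_{\filstack} S^0_{fil})$ with the filtered circle $S^1_{fil}$. Since $\Spec^{\Delta}_{\filstack}$ is fully faithful, affine stacks over $\filstack$ are detected on global sections, so I would compute $\OO(\Sigma_{\filstack} S^0_{fil})$ as a filtered $E_\infty$-algebra. As $\OO(-)$ is left adjoint to $\Spec^{\Delta}_{\filstack}$, it carries the defining pushout to the derived fiber product
$$
\OO(\Sigma_{\filstack} S^0_{fil}) \simeq \OO_{\filstack} \times_{\OO(S^0_{fil})} \OO_{\filstack}
$$
in $\mathsf{coSCR}_{\filstack} \simeq \on{CAlg}^{\on{ccn}}(\QCoh(\filstack))$. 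This is the filtered avatar of the classical presentation $C^*(S^1,k) \simeq k \times_{C^*(S^0,k)} k$ exhibiting the cochains of the topological circle as those of the affine suspension of $S^0$; the task is to match it with the Postnikov/HKR-filtered cochain algebra $C^*(S^1_{fil})$ of \cite{moulinos2019universal}.

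I expect this last identification to be the main obstacle, since the two presentations of $S^1_{fil}$ — as an affine relative suspension and via the explicit filtration on $C^*(S^1,k)$ — are a priori different and must be compared as filtered $E_\infty$-algebras. The cleanest route is to produce the natural comparison map and check that it is an equivalence on associated graded, i.e. after $\iota^*$; using $\OO(S^0_{fil})|_{B\GG_m} \simeq k[\epsilon]/(\epsilon^2)$ with $\epsilon$ in weight $-1$, the fiber product computes $k \oplus k[-1](-1)$, which is exactly the graded cochain algebra of $S^1_{gr}$. As both filtered algebras are complete, an equivalence on associated graded upgrades to an equivalence of filtered objects, and as a sanity check the generic fiber recovers $C^*(S^1,k) \simeq k \oplus k[-1]$ via $k \times_{k \times k} k$; this is precisely the content of the computation in \cite{moulinos2019universal}, completing the identification.
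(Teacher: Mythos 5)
Your argument is correct and follows essentially the same route as the paper: the corollary is read off from the affinization equivalence $S^{1,\bullet,\bullet}_{fil}\simeq \on{Aff}_{\filstack}(\Sigma S^{0}_{fil})$ by evaluating in bidegree $(1,1)$, where the $(1,1)$ term is the relative affine suspension of $S^0_{fil}$ (equivalently $\Sigma^2_{aff}(\mathcal{Q})$, as the paper notes in the subsequent remark). The only difference is that you make explicit the final identification with the filtered circle of \cite{moulinos2019universal} --- via the fiber-product presentation $\OO_{\filstack}\times_{\OO(S^0_{fil})}\OO_{\filstack}$ of global sections and the check on associated graded plus completeness --- a step the paper leaves implicit in the phrase ``via this description.''
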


\begin{rem}
Putting this together with Construction \ref{construction of filtered zero sphere}, we have the following identification
$$
S^1_{fil} \simeq \Sigma^{2}_{aff}(\mathcal{Q}) 
$$
exhibiting the filtered circle as a 2-fold suspension in affine stacks of $\mathcal{Q}$.  
\end{rem}

\begin{rem}
The reason we want to work with $\Aff_{\filstack}(\Sigma^2 \mathcal{Q})$ here as opposed to just $\Sigma^{2} \mathcal{Q}$ is that we would like to to recover the cogroupoid structure on the \emph{affine stack} $S^1_{Fil}$ of \cite{moulinos2019universal}.  Indeed the filtered loop space $\mathcal{L}_{fil}(X)$ was defined in loc. cit. as the stack of maps out of $S^1_{Fil}$. Thus we would like to study groupoid structures on the filtered loop space itself, as opposed to studying them on some object which is merely equivalent to it.  

\end{rem}
\begin{rem}
We remark that while one may recover the affine stack $S^1_{fil}$ in this way, this does not capture its structure as an abelian group stack. Indeed, this was studied in \cite{moulinos2019universal} by exhibiting  it as the classifying stack of a filtered abelian group scheme $\mathbb{H}$ over $\filstack$, which interpolates between the kernel and fixed points of the Frobenius on the Witt vector ring scheme $\mathbb{W}(-)$.
\end{rem}

\begin{const}
One can in fact iterate all of the above, eg. by taking iterated nerves of the map $\OO_{\filstack} \to 0_* B \GG_m$  in $\mathsf{coSCR}_{\filstack}$ and applying the $Spec^{\Delta}_{\filstack}$ functor to obtain  $S^n_{fil}$. We can summarize this discussion by saying that 
$$
S^n_{fil}:= \Spec^{\Delta}_{\filstack}(N^{\bullet, ... , \bullet}(\phi))
$$
acquires the structure of an $E_{n+1}$-cogroupoid  object in affine stacks over $\filstack$. 
\end{const}

\subsection{Filtered loop spaces}
Let $X$ be a derived scheme in characteristic zero. We describe the formation of higher filtered loop spaces, but focus in particular on the $n=1$ case. 

\begin{const}
Let $X$ be a derived scheme. Letting $X|_{\filstack} := X \times \filstack$, we set  
$$
\mathcal{L}^{(n)}_{fil}(X)=\Map(S^n_{Fil}, X|_{\filstack}),
$$
and refer to this as the  higher $n$-dimensional filtered loop space.
\end{const}

We would like to deduce that 

\begin{prop} \label{groupoidalnature}
Let $X$ be a derived affine scheme. Then the affinization morphism 
$$
\Sigma^{2} \mathcal{Q}^{\bullet, \bullet} \to \circlegroupoid
$$
induces an (levelwise)-equivalence of bi-cosimplicial derived stacks
$$
\Map_{\filstack}(\circlegroupoid,  X|_{\filstack}  ) \to \Map_{\filstack}(\Sigma^{2} \mathcal{Q}^{\bullet,\bullet} ,  X|_{\filstack}  ) 
$$
This makes the bi-cosimplicial object $\mathcal{L}_{fil}^{\bullet, \bullet}(X)$ into an $E_2$-groupoid in derived stacks. 
\end{prop}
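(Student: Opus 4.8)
The plan is to reduce the entire statement to a computation of global sections, which is exactly what the affineness of $X$ buys us. For an affine target the relative mapping functor $\Map_{\filstack}(-, X|_{\filstack})$ is corepresented by its algebra of functions: since $X|_{\filstack}$ is relative affine over $\filstack$, the basic adjunction between $\Spec^{\Delta}_{\filstack}$ and global sections gives a natural equivalence
$$
\Map_{\filstack}(Y, X|_{\filstack}) \simeq \Map_{\mathsf{coSCR}_{\filstack}}(\OO(X|_{\filstack}), \OO(Y))
$$
for every $Y \in \on{Stk}_{\filstack}$. In particular $\Map_{\filstack}(-, X|_{\filstack})$ factors through $\OO(-)$, so it carries any morphism that is an equivalence on global sections to an equivalence of mapping stacks, and it converts limits of algebras into limits (pullbacks) of mapping stacks.

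First I would record that the affinization morphism $\Sigma^2\mathcal{Q}^{\bullet,\bullet} \to \circlegroupoid$ is, in each bidegree, the unit $Y \to \on{Aff}_{\filstack}(Y) = \Spec^{\Delta}_{\filstack}\OO(Y)$ of the affinization, and hence induces an equivalence on $\OO(-)$; this is precisely the content of the preceding proposition identifying $\circlegroupoid$ with $\on{Aff}_{\filstack}(\Sigma^2\mathcal{Q}^{\bullet,\bullet})$. Combining this with the corepresentability above and applying $\Map_{\filstack}(-, X|_{\filstack})$ bidegreewise yields the asserted levelwise equivalence of bi-cosimplicial derived stacks
$$
\Map_{\filstack}(\circlegroupoid, X|_{\filstack}) \to \Map_{\filstack}(\Sigma^2\mathcal{Q}^{\bullet,\bullet}, X|_{\filstack}).
$$

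It then remains to exhibit $\mathcal{L}^{\bullet,\bullet}_{fil}(X) = \Map_{\filstack}(\circlegroupoid, X|_{\filstack})$ as an $E_2$-groupoid. For this I would use that $\circlegroupoid$ is an $E_2$-cogroupoid in affine stacks, i.e. that its defining Segal squares are pushouts of affine stacks in both cosimplicial directions. Under $\Spec^{\Delta}_{\filstack}$ such a pushout $\Spec^{\Delta}_{\filstack}(B) \sqcup_{\Spec^{\Delta}_{\filstack}(C)} \Spec^{\Delta}_{\filstack}(B')$ has global sections the pullback $B \times_C B'$, because $\OO(-)$ sends colimits of stacks to limits of algebras and is inverse to $\Spec^{\Delta}_{\filstack}$ on affine stacks by the earlier full faithfulness. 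Applying $\Map_{\mathsf{coSCR}_{\filstack}}(\OO(X|_{\filstack}), -)$, which preserves limits, then sends each such square to a pullback square of mapping stacks. Thus both the row and column Segal conditions of $\circlegroupoid$ are converted into groupoid (pullback) conditions, and since limits commute with limits these are compatible across the two directions, giving the $E_2$-groupoid structure.

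The main obstacle is precisely this last point. The cogroupoid structure on $\circlegroupoid$ is built from pushouts taken \emph{inside} the subcategory of affine stacks, which are not pushouts in $\on{Stk}_{\filstack}$, so $\Map_{\filstack}(-, X|_{\filstack})$ does not send them to pullbacks for a general target. This is exactly where the affineness of $X$ is indispensable: it is what allows the functor to factor through $\OO(-)$ and thereby reinterpret an affine colimit as a limit of function algebras. Passing through $\Sigma^2\mathcal{Q}^{\bullet,\bullet}$, whose outer structure maps are genuine conerve pushouts in $\on{Stk}_{\filstack}$, makes the outer direction transparent but does not remove the affine hypothesis needed in the inner cosimplicial direction coming from $S^{0,\bullet}_{fil}$. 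Care is therefore needed to check that the identification of $\OO$ of an affine pushout with the corresponding algebra pullback holds coherently over the whole bi-cosimplicial diagram, and not merely in each isolated Segal square.
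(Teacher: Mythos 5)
Your proof is correct and rests on the same underlying mechanism as the paper's --- the affineness of $X$ forces $\Map_{\filstack}(-,X|_{\filstack})$ to factor through $\OO(-)$ --- but you execute both steps differently. For the levelwise equivalence, the paper does not argue directly over $\filstack$: it pulls back along $\eta\colon\Spec k\to\filstack$ and $\iota\colon B\GG_m\to\filstack$ separately and invokes the universal property of affinization over a field (citing DAG), whereas you run the adjunction $\OO\dashv\Spec^{\Delta}_{\filstack}$ globally over the base. For the Segal conditions, the paper transfers them from $\Map_{\filstack}(\Sigma^2\mathcal{Q}^{\bullet,\bullet},X|_{\filstack})$, using that the Segal squares of $\Sigma^2\mathcal{Q}^{\bullet,\bullet}$ are genuine pushouts in $\on{Stk}_{\filstack}$ so that mapping out converts them to pullbacks for \emph{any} target; you instead read them off directly from $\circlegroupoid$, observing that its Segal squares are $\Spec^{\Delta}_{\filstack}$ of pullback squares of algebras in the $2$-groupoid $N(\phi)_{\bullet,\bullet}$ and that the corepresentable functor $\Map_{\mathsf{coSCR}_{\filstack}}(\OO(X|_{\filstack}),-)$ preserves these limits. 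Your route is arguably cleaner in that it isolates exactly why affineness of $X$ is needed (the cogroupoid pushouts are only pushouts in affine stacks, a point the paper leaves implicit by detouring through $\Sigma^2\mathcal{Q}$), and your closing caveat about this is well taken. The one place where both your argument and the paper's are equally thin is the promotion from an equivalence of mapping \emph{spaces} to one of mapping \emph{stacks}: this requires that the $\OO$-equivalence persist after base change along an arbitrary $\Spec B\to\filstack$ (equivalently, a base-change property $\OO(Y\times_{\filstack}\Spec B)\simeq\OO(Y)\otimes_{\OO(\filstack)}B$ for the stacks involved), which is what the paper's appeal to the two fibers and to the finite cohomological dimension of $\Sigma^2\mathcal{Q}^{m,n}$ is meant to address; if you wanted to tighten your version you would insert that check after the corepresentability step.
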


\begin{proof}
For an arbitrary bidegree $(m,n)$ we have a canonical map 
\begin{equation} \label{levelwisequivalence}
    \Map_{\filstack}(S^{1, m, n}_{fil},  X|_{\filstack}  ) \to \Map_{\filstack}(\Sigma^{2} \mathcal{Q}^{m,n} ,  X|_{\filstack}  ) 
\end{equation}
induced by restriction along $\Sigma^{2} \mathcal{Q}^{m, n} \to S^{1,m,n}_{fil}$,  the levelwise affinization morphism. We would like to show that this is an equivalence. Granting this, we use the fact that $\Sigma^{2} \mathcal{Q}^{m,n}$ is a 2-cogroupoid in derived stacks, so that one gets a suitable product decomposition in (in each degree $(m,n)$ of te associated mapping stacks) satisfying the Segal conditions for a groupoid object. 

In order to display the map (\ref{levelwisequivalence}) as an equivalence, it will be enough to do so upon pulling back along $\eta, \iota$ respectively. We first treat the generic fiber case, i.e. the pullback along $\eta$. As we are in the characteristic zero setting, where simplicial and cosimplicial algebras can be modeled by connective and coconnective $E_\infty$ algebras respectively, we may appeal to \cite[Section 4]{lurie2011dag} , (see also \cite{ben2012loop}). In particular, the universal property in this setting states that the affinization functor is a monad in derived stacks; in particular, any map to a derived affine scheme out of a derived stack will factor through its affinization. In particular this gives an equivalence at the level of derived mapping stacks. 

For the same reason, one obtains an equivalence upon pulling back along $B \GG_m \xrightarrow{\iota} \filstack$. We can now conclude that the original map (\ref{levelwisequivalence}) defined over $\filstack$ is a levelwise equivalence. Hence, $\mathcal{L}^{\bullet, \bullet}_{fil}(X)$ is an $E_2$-cogroupoid in derived stacks. 
\end{proof}

\begin{rem}
The argument above uses the fact that we are in characteristic zero and the resulting notion of affinization in the derived setting. In positive or mixed characteristics, one can argue more generally  using the cohomological finite dimension of the stacks $\Sigma^{2} \mathcal{Q}^{m,n}$ and  $S^{1, m, n}_{fil}$ to obtain equivalence (\ref{levelwisequivalence}).

\end{rem}
\begin{rem}
While we have focused on the $n=1$ case, the considerations here are valid for higher loop spaces as well. Thus, one obtains a $E_{n+1}$ groupoid objects where the degree $(1,...1)$ object of morphisms is the $\Map_{\filstack}(S^n_{fil}, X|_{\filstack})$.
\end{rem}

\section{Hodge degeneration and deformation to the normal bundle}
In this section, we work over a characteristic zero base field $k$ and study the well known Hodge degeneration in its geometric form studied by Simpson (cf. \cite{simpson1996hodge, simpson1990nonabelian}  One avatar of the Hodge filtration on de Rham cohomology is given by the deformation to the normal cone construction. This manifests itself as a derived stack over $\filstack$ which specializes upon taking generic and central fibers to the de Rham stack and the Dolbeault stack respectively. 

\subsection{The de Rham and Dolbeault spaces}
We give a brief review here of the de Rham and Dolbeault space constructions in derived algebraic geometry. The results in this section are quite classical and well-known; we give them to motivate the objects studied. 

\begin{defn}
Let $X$ be a scheme. Then the de Rham space of $X$ is defined to be the moduli problem $X_{dR} \in \on{dStk}_k$ given by  
$$
X_{dR}(A) = X(\pi_0(A)/I),
$$
where $I$ denotes the nilradical of $\pi_0(A)$. 
\end{defn}

The de Rham space was originally defined by Simpson as the quotient stack of a certain groupoid object in formal schemes.  This  goes back to the original construction of Simpson, see, for example \cite{simpson1996hodge}. 

\begin{const}
Let $X$ be a smooth scheme and let $\pi: X \to X_{dR}$ denote the canonical map. One sees that this is an effective epimorphism in the $\infty$-category of derived stacks, and so it is the effective quotient of the its nerve groupoid. 
Thus we can define the de Rham stack as the realization of the following groupoid
$$
X \leftleftarrows (X \times X)\widehat{_{\Delta}} 
$$
where the object $(X \times X)\widehat{_{\Delta}}$ denotes the formal completion of $X \times X$ along the diagonal morphism $\Delta: X \to X$.
\end{const}

 A proof of the following statement, using ideas which go back to Grothendieck's work in the setting of infinitesimal cohomology, may be found in \cite{lurie2009notes}:

\begin{thm} \label{theorem classic D module}
Let $X$ be a smooth scheme over $k$. Then there is an equivalence of categories:
$$
\QCoh(X_{\on{dR}}) \simeq \mathcal{D}_{X}-\Mod
$$
\end{thm}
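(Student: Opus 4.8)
The plan is to present $X_{dR}$ through its groupoid of points and reduce the statement to Grothendieck's classical identification of crystals with $\mathcal{D}$-modules. The construction just recalled exhibits $\pi \colon X \to X_{dR}$ as an effective epimorphism (equivalently a nil-isomorphism, so that $X_{dR}$ is recovered as the realization of its \v{C}ech nerve in the sense of Theorem \ref{gaitsgorymain}), and this \v{C}ech nerve is the formal groupoid $(X\times X)\widehat{_{\Delta}} \rightrightarrows X$, since $X \times_{X_{dR}} X \simeq (X\times X)\widehat{_{\Delta}}$. Because $\QCoh(-)$ satisfies descent along effective epimorphisms, the first step is to write
$$
\QCoh(X_{dR}) \simeq \Tot\Big[\, \QCoh(X) \rightrightarrows \QCoh\big((X\times X)\widehat{_{\Delta}}\big) \Rrightarrow \cdots \,\Big],
$$
so that an object of $\QCoh(X_{dR})$ is a quasi-coherent sheaf $\mathcal{F}$ on $X$ equipped with descent data along the formal groupoid, i.e. an isomorphism $p_1^*\mathcal{F} \simeq p_2^*\mathcal{F}$ over $(X\times X)\widehat{_{\Delta}}$ satisfying the usual cocycle condition.

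Second, I would identify this descent datum with an integrable connection. Since the groupoid is supported on the formal neighborhood of the diagonal, the isomorphism $p_1^*\mathcal{F}\simeq p_2^*\mathcal{F}$ amounts to a compatible family of isomorphisms over all infinitesimal thickenings of $\Delta$; this is precisely Grothendieck's notion of a stratification, and for $X$ smooth in characteristic zero a stratification is the same datum as a flat connection $\nabla\colon \mathcal{F}\to \mathcal{F}\otimes \Omega^1_X$.

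Third, and most conceptually, I would linearize the formal groupoid. The formal groupoid $(X\times X)\widehat{_{\Delta}}\rightrightarrows X$ has associated Lie algebroid the tangent bundle $\mathbb{T}_X$, whose universal enveloping algebroid is the sheaf of differential operators $\mathcal{D}_X$. Over $\C$ the formal groupoid is recovered as the formal exponential of $\mathbb{T}_X$ (this is where both smoothness, guaranteeing that $\mathbb{T}_X$ is a vector bundle, and characteristic zero, guaranteeing a Poincar\'e--Birkhoff--Witt identification, are indispensable), so comodules over the formal groupoid coincide with modules over $\mathcal{D}_X$. Combining this with the descent equivalence of the first step yields $\QCoh(X_{dR})\simeq \mathcal{D}_X\text{-}\Mod$.

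The main obstacle is exactly this third step: establishing that representations of the infinitesimal groupoid are faithfully captured by the enveloping algebra $\mathcal{D}_X$, equivalently that a flat connection and a $\mathcal{D}_X$-module structure are interchangeable. This is the content of Grothendieck's theorem and can be imported from \cite{lurie2009notes}; the remaining bookkeeping---matching the cocycle/associativity conditions on the descent side with the module axioms on the $\mathcal{D}_X$ side---is routine once the exponential identification of the formal groupoid with $\mathbb{T}_X$ is in place.
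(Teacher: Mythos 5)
The paper does not actually prove this statement: it records it with a pointer to \cite{lurie2009notes}, immediately after presenting $X_{dR}$ as the realization of the groupoid $X \leftleftarrows (X\times X)\widehat{_{\Delta}}$. Your outline --- descent along the effective epimorphism $\pi\colon X \to X_{dR}$, identification of the descent data over the formal neighborhood of the diagonal with Grothendieck stratifications and hence (using smoothness and characteristic zero) with flat connections, and the exponential/PBW identification of the formal groupoid with the enveloping algebroid $\mathcal{D}_X$ of $\mathbb{T}_X$ --- is precisely the standard argument contained in that reference, so it is correct and consistent with the approach the paper implicitly takes.
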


\noindent Thus, the de Rham stack gives a conceptual way to understand $\mathcal{D}_{X}$-modules on $X$. More generally, e.g. for a derived scheme, a Koszul dual variant of the left hand-side parametrizes the notion of \emph{crystals} on the infinitesimal site.  
\\

We now move on to the Dolbeault space. This construction is (again) originally due to Carlos Simpson as a natural geometric parametrization for the notion of Higgs bundles. First we recall the notion of tangent bundle in this setting:

\begin{defn}
Let $X$ be a derived scheme.  The tangent bundle of $X$ is defined to be 
$$
\mathcal{T}X= \Spec_{X} \Sym(\mathbb{L}_{X});
$$
thus it is a relative derived scheme over $X$. 
\end{defn}

\begin{rem}
The tangent bundle admits the structure of an abelian group object over $X$, which we may think of as a simplicial object 
$$
(\mathcal{T}X)_\bullet
$$
with 
$$
\mathcal{T}X_0 = X  \, \, \, \, \,  \mathcal{T}X_2 = \mathcal{T}X \times_{X} \mathcal{T}X 
$$
\end{rem}

\begin{const} 
One can take the formal completion along the unit section 
$$
u: X \to \mathcal{T}X. 
$$
Given the machinery established thus far, we define this to be the pullback 
$$
\widehat{\mathcal{T}X} = X_{dR} \times_{{\mathcal{T}X}_{dR}}  \mathcal{T}X
$$
In fact, one may apply this to the map $X \to \mathcal{T}^n X$ to obtain a simplicial object in formal moduli problems, $\widehat{\mathcal{T}X}_{\bullet}$.  We remark that this becomes a groupoid object in formal moduli problems, cf. \cite{gaitsgory2020study}. 
\end{const}

\begin{defn}
We define, following Simpson, the Dolbeault stack to be the delooping
$$
X_{Dol}:= B_{X}\widehat{\mathcal{T}X_\bullet}
$$
of the formal group object $\mathcal{T}X_\bullet$ 
\end{defn}

\begin{prop}
There is an equivalence of stable $\infty$-categories
$$
\QCoh(X_{Dol}) \simeq \Mod_{\Sym({\mathbb{T}}X)}
$$
where $\mathbb{T}_X$ denotes the $\OO_X$-linear dual of the cotangent complex $\mathbb{L}_X$. 
\end{prop}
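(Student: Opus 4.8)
The plan is to recognize $X_{Dol}$ as the classifying stack of an abelian formal group over $X$ and then translate the computation of its quasi-coherent sheaves into Lie theory. By definition $X_{Dol} = B_X \widehat{\mathbb{T}X}_\bullet$ is the geometric realization of the bar construction of the formal group $\widehat{\mathbb{T}X}$, obtained by completing the vector (hence abelian) group $\mathcal{T}X = \Spec_X \Sym(\mathbb{L}_X)$ along its zero section. Since $\QCoh$ sends colimits of stacks to limits of $\infty$-categories, this realization is carried automatically to a totalization
$$\QCoh(X_{Dol}) \simeq \on{Tot}\big(\QCoh(\widehat{\mathbb{T}X}^{\times_X \bullet})\big) \simeq \on{Rep}_X(\widehat{\mathbb{T}X}),$$
the $\infty$-category of representations of the formal group $\widehat{\mathbb{T}X}$.

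First I would invoke the equivalence, valid over our characteristic-zero base, between representations of a formal group $\mathcal{G}$ over $X$ and modules over the universal enveloping algebra $U(\mathfrak{g})$ of its Lie algebra $\mathfrak{g} = \on{Lie}(\mathcal{G}) \in \QCoh(X)$. This is the group-theoretic shadow of the formal-groupoid/Lie-algebroid correspondence of Gaitsgory-Rozenblyum underlying Theorem \ref{gaitsgorymain}, specialized to the case of a formal group (a formal groupoid with trivial anchor). It reduces the proposition to identifying $\on{Lie}(\widehat{\mathbb{T}X})$ and computing its enveloping algebra.

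Next I would compute the Lie algebra. As $\mathcal{T}X$ is the total space of the vector bundle with fiber the tangent complex $\mathbb{T}_X$, and formal completion along the unit section leaves the tangent space at the identity unchanged, $\on{Lie}(\widehat{\mathbb{T}X}) \simeq \mathbb{T}_X$ in $\QCoh(X)$. Because $\mathcal{T}X$ is a vector group, this Lie algebra carries the trivial bracket; its enveloping algebra therefore collapses to the free commutative algebra $U(\mathbb{T}_X) \simeq \Sym_{\OO_X}(\mathbb{T}_X)$. Chaining the equivalences of the three paragraphs yields $\QCoh(X_{Dol}) \simeq \Mod_{\Sym(\mathbb{T}_X)}$, as claimed.

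The hard part is the Lie-theoretic step of the second paragraph: one must identify the totalization over the bar construction — a priori comodules over the (completed, coconnective) algebra of functions $\OO(\widehat{\mathbb{T}X}) \simeq \widehat{\Sym}(\mathbb{L}_X)$ — with modules over the enveloping algebra via Koszul duality, keeping careful track of shift conventions so that no spurious degree shift appears. The key bookkeeping point is that one takes $U$ of the Lie algebra of the group $\widehat{\mathbb{T}X}$ itself, so that the $[-1]$-shift appearing in the formal-moduli-problem/Lie-algebra dictionary (which pertains to the delooping $B_X\widehat{\mathbb{T}X}$) has already been absorbed by the passage to the classifying stack. A subsidiary point, needed to collapse $U$ to $\Sym$, is to confirm that the abelian group structure on the vector group $\mathcal{T}X$ induces the trivial Lie bracket on $\mathbb{T}_X$; this is immediate from functoriality of $\on{Lie}$ applied to the commutativity of $\mathcal{T}X$.
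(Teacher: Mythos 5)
Your argument is correct and arrives at the right answer, but it routes through the middle differently than the paper does. Both proofs start from the same descent input: the paper phrases it as comonadic Barr--Beck, identifying $\QCoh(X_{Dol})$ with comodules over $\widehat{\on{Sym}}(\mathbb{L}_X) = \OO(\widehat{\mathcal{T}X})$ in $\QCoh(X)$, which is exactly your totalization $\on{Rep}_X(\widehat{\mathcal{T}X})$. The divergence is in how comodules become modules. The paper simply takes the $\OO_X$-linear dual of the coaction map $M \to M \otimes_{\OO_X} \widehat{\on{Sym}}(\mathbb{L}_X)$ and then identifies $(\widehat{\on{Sym}}(\mathbb{L}_X))^{\vee} \simeq \on{Sym}(\mathbb{T}_X)$ using the coincidence of divided and symmetric powers in characteristic zero. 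You instead pass through the formal-group/Lie-algebra dictionary: compute $\on{Lie}(\widehat{\mathcal{T}X}) \simeq \mathbb{T}_X$ with trivial bracket and invoke $U(\mathbb{T}_X) \simeq \on{Sym}(\mathbb{T}_X)$. These are two faces of the same duality --- the enveloping algebra of $\mathfrak{g}$ is the distribution algebra $\OO(\mathcal{G})^{\vee}$ --- and each uses characteristic zero exactly once (yours at PBW and the collapse of $U$ to $\on{Sym}$, the paper's at divided versus symmetric powers). The paper's version is more elementary and self-contained; yours is more conceptual and meshes with the Lie-theoretic machinery (Theorem \ref{gaitsgorymain}, the Todd-class section) used elsewhere in the paper. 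One terminological quibble: the comodule-to-module passage for a formal group is linear duality to the distribution algebra rather than Koszul duality proper (which would instead relate $U(\mathfrak{g})$-modules to comodules over the Chevalley--Eilenberg cochains of $\mathfrak{g}$, i.e., to sheaves on the delooping); your shift bookkeeping is nonetheless correct, and the identification $\on{Lie}(\widehat{\mathcal{T}X}) \simeq e^*\mathbb{T}_{\mathcal{T}X/X} \simeq \mathbb{T}_X$ indeed carries no shift.
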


\begin{proof}
An application of the comonadic form of the Barr-Beck theorem shows that 
$$
\QCoh(X_{Dol}) \simeq \on{coMod}_{\widehat{\on{Sym}(\mathbb{L}_X)}}(\QCoh(X)).
$$
Alternatively, this can be extracted as the descent data defining $X_{Dol}$ as the realization of the groupoid  $\mathcal{T}X_{\bullet}$ corresponding to the formal group $\widehat{\mathcal{T}X}$ over $X$.
We now take the $\OO_X$-linear dual of the action map, for any fixed $M \in \QCoh(X_{Dol})$
$$
M \to M \otimes_{\OO_X}\widehat{\on{Sym}(\mathbb{L}_X)} 
$$
to obtain a map 
$$
(\widehat{\on{Sym}(\mathbb{L}_X)})^{\vee} \otimes_{\OO_X} M \to M,
$$
giving $M$ the structure of a $\widehat{\on{Sym}(\mathbb{L}_X)})^{\vee}$-module. It thus amounts to identify $\widehat{\on{Sym}(\mathbb{L}_X)})^{\vee}$ with $\Sym(\mathbb{T}X)$. For this, note first that 
$$
\widehat{\on{Sym}(\mathbb{L}_X)} \simeq \prod \Sym^{n}(\mathbb{L}_X), 
$$
so that 
$$
(\widehat{\on{Sym}(\mathbb{L}_X)})^{\vee} \simeq \bigoplus \Sym^{n}(\mathbb{L}_X)^{\vee}
$$
To identify each summand, we use the fact that $(\mathbb{L}_X)^{\vee} \simeq \mathbb{T}_X $and that we are in characteristic zero (so that divided and symmetric powers coincide) to conclude that 
$$
\Sym^{n}(\mathbb{L}_X)^{\vee} \simeq \Sym^{n}(\mathbb{T}_X)
$$
for every $n$. Thus we deduce the equivalence
$$
\widehat{\on{Sym}(\mathbb{L}_X)})^{\vee}\simeq \on{Sym}(\mathbb{T}_X).
$$
From this, we may conclude that $M$ is naturally a $\on{Sym}(\mathbb{T}_X)$-module. 
\end{proof}

As a consequence of the above, one may think of $\QCoh(X_{Dol})$ as a derived $\infty$-category of Higgs sheaves, cf. \cite{simpson1990nonabelian}.

\subsection{$\lambda$-connections and the deformation to the normal cone }

There is a natural $1$-parameter deformation of the notion of a flat connection. This is the notion of a $\lambda$-connection, going back to Deligne.

\begin{defn}
Let $E$ be a vector bundle on a scheme $X$. A $\lambda$-connection is an operator 
$$
\nabla_{\lambda}: E \to E \otimes \Omega^1_{X}
$$
for which 
$$
\nabla_{\lambda}(ae) = \lambda d(a)e + a \nabla(e)
$$
\end{defn}

\begin{rem}
If $\lambda \in R^\times$, then a $\lambda$ connection $\nabla_{\lambda}$  on a vector bundle $E$ gives equivalent data to a connection $\lambda^{-1} \nabla_{\lambda}$ On the other hand, if $\lambda= 0$, this is precisely the data of a Higgs bundle.    
\end{rem}

Let us give another perspective for the notion of $\lambda$-connection. Recall that the ring of differential operators $\mathcal{D}_X$ comes equipped with an filtration known as the \emph{order filtration}. This has associated graded $\OO_{\on{T}^*X} \simeq \Sym(\mathbb{T}_X)$. Applying the Rees construction to this gives the following $\GG_m$-equivariant sheaf of algebras over $X \times \AAA^1$.

\begin{defn}
    Let $\mathcal{D}^{\lambda}_{X}$ be the sheaf of algebras  on $X \times \filstack$ defined by 
    $$
    \mathcal{D}^{\lambda}_{X} = \bigoplus _{k \geq 0} t^k \mathcal{D}^{\leq k} \subset \mathcal{D}_{X} \otimes R[t],
    $$
    where the coordinate $t$ acts as $\lambda$. There is an evident $\GG_m$ action on this given by scaling the variable $t$. Thus it descends to a sheaf on $X \times \filstack$.

\end{defn}

\begin{rem}
By the definition of the Rees construction, we see that fiber of this object over $B \GG_m$ is the associated graded of the weight filtration, nameely $\Sym(\mathbb{T}_X)$. Meanwhile, pullback along the generic point recovers the ring of differential operators $\mathcal{D}_X$ itself.  Thus we may conclude that the stucture of a $\mathcal{D}^{\lambda}_{X}$-module is none other than that of a  $\lambda$-connection.  
\end{rem}

The following definition of the Hodge degeneration goes back to Simpson, following a suggestion of Deligne. It gives rise to a natural geometric parametrization for the notion of $\lambda$-connection. 

\begin{const} \label{Hodgestack}
Form the (levelwise) mapping stack over $\filstack$:
$$
\Map_{\filstack}(S^{0, \bullet}, X|_{\filstack}),
$$
where $S^{0,\bullet}$ is the cogroupoid object in derived stacks studied in Section \ref{cogroupoidcircle}. 
This will be a groupoid object in derived stacks. Next form the formal completion in each degree, along the map 
$$
X|_{\filstack} \to \Map_{\filstack}(S^{0, \bullet}, X|_{\filstack}).
$$
In more detail, we define this (in accordance with Section \ref{section formal completions}) by the following pullback square:
\begin{equation}
    \xymatrix{
& \Map_{\filstack}(S^{0, \bullet}, X|_{\filstack})^{\wedge}_{X|_{\filstack}}   \ar[d]^{} \ar[r]^-{} &  \Map_{\filstack}(S^{0, \bullet}, X|_{\filstack}) \ar[d]^{}\\
 & [X|_{\filstack}]_{dR} \ar[r]^-{} & [\Map_{\filstack}(S^{0, \bullet}, X|_{\filstack})]_{dR}
}    
\end{equation}
Notice that over the point $\Spec R \xrightarrow{1} \filstack$ the map with respect to which we are taking the formal completion is precisely the diagonal map $\Delta: X \to X \times X$; over the point $0 : B \GG_m \to \filstack$, this is exactly the inclusion $X \to \mathcal{T}^{\bullet}$ of the units into the group(oid) object defining the Dolbeault space. 
\end{const}

\begin{defn} \label{therealhodgestack}
We denote the above simplicial stack by  $X^{\bullet}_{\lambda}$. We then define 
$$
X_{Hod} = ||X^{\bullet}_{\lambda}||
$$
as its classifying stack. 
\end{defn}

By construction, the stack $X_{Hod}$ admits a map to $\filstack$. Pulling back along the generic fiber, one recovers the de Rham space:

\begin{prop}
There is an equivalence 
$$
X_{Hod} \times_{\filstack} \Spec R \simeq X_{dR}
$$
\end{prop}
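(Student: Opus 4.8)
The plan is to pull the defining simplicial presentation of $X_{Hod}$ back along the generic point $\eta : \Spec R \to \filstack$ and to identify the result with the standard groupoid presentation of the de Rham space. Since we work inside an $\infty$-topos, where colimits are universal, base change along $\eta$ commutes with the geometric realization defining $X_{Hod}$ in Definition \ref{therealhodgestack}; concretely,
$$
X_{Hod} \times_{\filstack} \Spec R \simeq || X^{\bullet}_\lambda \times_{\filstack} \Spec R ||.
$$
Thus it suffices to understand the pullback of the simplicial stack $X^{\bullet}_\lambda$ degreewise and then take its realization.

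Next I would compute the degreewise pullback. Recall that $X^{\bullet}_\lambda$ is obtained from $\Map_{\filstack}(S^{0,\bullet}, X|_{\filstack})$ by formal completion along the unit $X|_{\filstack} \to \Map_{\filstack}(S^{0,\bullet}, X|_{\filstack})$. We have $\eta^*(X|_{\filstack}) \simeq X$, while the generic fiber of $S^{0,\bullet}$ is the topological conerve of $\emptyset \to \pt$, whose degree-$n$ term is the $(n+1)$-fold coproduct of points (in degree $1$ one recovers $S^0 \simeq \Spec k \sqcup \Spec k$). Since formation of mapping stacks commutes with base change, $\eta^*\Map_{\filstack}(S^{0,n}, X|_{\filstack}) \simeq \Map(S^{0,n}, X) \simeq X^{n+1}$, and the unit becomes the small diagonal $X \to X^{n+1}$. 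Because formal completion is defined by the pullback square of Section \ref{section formal completions} (hence is a limit), it too is compatible with base change along $\eta$. Therefore, as already anticipated in Construction \ref{Hodgestack}, the degree-$n$ term of $\eta^* X^{\bullet}_\lambda$ is the formal completion $(X^{n+1})^{\wedge}_{X}$ of $X^{n+1}$ along its small diagonal.

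Finally I would identify this groupoid with the Čech nerve of the canonical map $\pi : X \to X_{dR}$. The key computation is
$$
(X \times X)^{\wedge}_\Delta \simeq X \times_{X_{dR}} X,
$$
which follows directly from the pullback description of the formal completion together with $(X\times X)_{dR}\simeq X_{dR}\times X_{dR}$; more generally one obtains $(X^{n+1})^{\wedge}_X \simeq X^{\times_{X_{dR}}(n+1)}$. Hence $\eta^* X^{\bullet}_\lambda$ is precisely the Čech nerve of $\pi$. Since $\pi$ is a nil-isomorphism, the key property recorded in the introduction (that a nil-isomorphism $f : X \to Y$ lets one recover $Y$ as the realization of its Čech nerve) yields $||\eta^* X^{\bullet}_\lambda|| \simeq X_{dR}$, and combining with the first paragraph gives the claimed equivalence.

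The main obstacle is the compatibility invoked in the second step: justifying carefully that base change along $\eta$ commutes with the degreewise formal completion. This hinges on the behavior, under pullback along $\eta$, of the de Rham functor appearing in the completion square; one must check that restricting the relevant de Rham stack over $\filstack$ to the generic fiber returns the absolute de Rham stack of the fiber, so that the defining pullback square is preserved. Once this compatibility is established, the identification with the Čech nerve of $X \to X_{dR}$ and the final realization step are formal.
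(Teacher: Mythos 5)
Your proposal is correct. Note that the paper states this proposition without any proof at all, evidently regarding it as immediate from Construction \ref{Hodgestack} (which already observes that over the generic point the formal completion is taken along the small diagonal $X \to X^{n+1}$) together with the paper's own presentation of $X_{dR}$ as the realization of the groupoid $X \leftleftarrows (X\times X)^{\wedge}_{\Delta}$. Your argument is exactly the fleshed-out version of that implicit reasoning: base change along $\eta$ commutes with the realization because colimits in an $\infty$-topos are universal, the degreewise identification $(X^{n+1})^{\wedge}_{X} \simeq X \times_{X_{dR}} \cdots \times_{X_{dR}} X$ follows from the pullback definition of formal completion together with $(X^{n+1})_{dR} \simeq (X_{dR})^{n+1}$, and the final step uses that $X \to X_{dR}$ is an effective epimorphism. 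The one subtlety you flag, compatibility of formal completion with base change, is indeed the only point needing care, and it holds here because the completion $\EuScript{Y}^{\wedge}_{\EuScript{X}}$ is defined by a pullback against de Rham stacks and thus commutes with pullback of the ambient object along $\eta$.
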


On the other hand, pulling back along the map $\iota: B \GG_m \to \filstack$ recovers the Dolbeault space:

\begin{prop}
There is an equivalence 
$$
X_{Hod} \times_{\filstack} B \GG_m  \simeq X_{Dol},
$$
where the structure map $X_{Dol} \to B \GG_m$ arises from the canonical dilation $\GG_m$-action on the formal group $\widehat{\mathbb{T}X}$.
\end{prop}

\noindent The following theorem morally goes back to Simpson and may be proven similarly to Theorem \ref{theorem classic D module}:

\begin{thm}[Simpson]  
Let $X$ be a smooth scheme. Then there is an equivalence of $\infty$-categories 
$$
\QCoh(X_{Hod}) \simeq \Mod(\mathcal{D}_{X}^{\lambda}) 
$$
Here, the right hand side denotes the $\infty$-category of $\lambda$-connections.  
\end{thm}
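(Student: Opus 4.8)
The plan is to mimic the proof of Theorem~\ref{theorem classic D module} and of the Dolbeault equivalence $\QCoh(X_{Dol}) \simeq \Mod_{\Sym(\mathbb{T}_X)}$ established above, but to carry it out relative to $\filstack$ so that the two fibers are interpolated by a single filtered algebra. First I would observe that by Definition~\ref{therealhodgestack} the stack $X_{Hod}$ is the geometric realization of the groupoid object $X^\bullet_\lambda$, so that $X|_{\filstack} \to X_{Hod}$ is an effective epimorphism whose Čech nerve recovers $X^\bullet_\lambda$ (as in the classifying-stack discussion of Construction~\ref{Hodgestack}). Quasi-coherent descent then gives
$$
\QCoh(X_{Hod}) \simeq \Tot\left( \QCoh(X^\bullet_\lambda) \right),
$$
the totalization of the cosimplicial $\infty$-category obtained by applying $\QCoh(-)$ levelwise.

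Next I would invoke the comonadic form of the Barr--Beck--Lurie theorem, exactly as in the proof of the Dolbeault equivalence. Pullback along $X|_{\filstack} \to X_{Hod}$ is conservative (since the map is an effective epimorphism) and preserves the relevant limits, so $\QCoh(X_{Hod})$ is identified with the $\infty$-category of comodules $\coMod_{\widehat{\mathcal{B}}}(\QCoh(X|_{\filstack}))$ over the coalgebra $\widehat{\mathcal{B}}$ of functions on the degree-$1$ formal completion $X^1_\lambda = \Map_{\filstack}(S^0_{fil}, X|_{\filstack})^\wedge_{X|_{\filstack}}$. Because we are in characteristic zero I would then dualize the coaction, as in the Dolbeault argument (using $\widehat{\Sym}(-)^\vee \simeq \Sym((-)^\vee)$ to pass between divided and symmetric powers), replacing $\coMod_{\widehat{\mathcal{B}}}$ by modules over the $\OO_{X|_{\filstack}}$-linear dual algebra $\mathcal{A} := \widehat{\mathcal{B}}^\vee$, an associative algebra object in $\QCoh(X|_{\filstack})$.

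The crux of the argument is then the identification $\mathcal{A} \simeq \mathcal{D}_X^\lambda$. Here I would use that $\widehat{\mathcal{B}}$ is built from the deformation to the normal cone of the diagonal: by Construction~\ref{Hodgestack}, pulling $X^1_\lambda$ back along $\eta$ gives the formal completion of $X \times X$ along the diagonal, whose algebra of distributions is $\mathcal{D}_X$ by the computation underlying Theorem~\ref{theorem classic D module}, while pulling back along $\iota$ gives the formal completion of $\mathcal{T}X$ along its zero section, whose distributions are $\Sym(\mathbb{T}_X)$ by the Dolbeault computation. Since $\mathcal{A}$ is a flat filtered algebra over $\filstack$ with these two fibers and with associated graded matching the order filtration, it must be the Rees algebra interpolating $\Sym(\mathbb{T}_X)$ and $\mathcal{D}_X$, i.e. the algebra $\mathcal{D}_X^\lambda$ of $\lambda$-connections. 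Combining this identification with the comonadic description yields the desired equivalence $\QCoh(X_{Hod}) \simeq \Mod(\mathcal{D}_X^\lambda)$.

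The main obstacle I expect is this last identification: one must check that the filtered distribution algebra of the deformation to the normal cone carries no higher homotopical corrections beyond the two known fibers and genuinely reproduces the order (Rees) filtration on $\mathcal{D}_X$. This is where smoothness of $X$ and the characteristic-zero hypothesis are essential---smoothness guarantees that $\mathbb{L}_X$ is a vector bundle, so the formal completions are well-behaved and the associated graded is concentrated in degree zero, while characteristic zero permits the symmetric/divided power identification used in dualizing. A secondary technical point is verifying the Barr--Beck hypotheses over the base $\filstack$ (conservativity and preservation of the relevant totalizations), which follows formally from the effective-epimorphism property of $X|_{\filstack} \to X_{Hod}$ together with the flatness of the structure maps.
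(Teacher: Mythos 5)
The paper does not actually supply a proof of this statement: it is attributed to Simpson with the remark that it ``may be proven similarly to Theorem~\ref{theorem classic D module},'' so your write-up is filling a gap rather than shadowing an argument in the text. Your route --- descent along $X|_{\filstack} \to X_{Hod}$, comonadic Barr--Beck, then $\OO$-linear dualization in characteristic zero --- is exactly the argument the paper runs for the Dolbeault identification $\QCoh(X_{Dol}) \simeq \Mod_{\Sym(\mathbb{T}_X)}$, so the approach is the intended one and the structure is sound.

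One step as written does not follow: ``$\mathcal{A}$ is a flat filtered algebra over $\filstack$ with these two fibers \ldots so it must be the Rees algebra.'' A filtered algebra is not determined by its generic fiber and its associated graded (the Rees algebra of the order filtration and the constant degeneration of $\mathcal{D}_X$ have the same two fibers when $X$ is affine of dimension $>0$, yet are not equivalent --- indeed the non-constancy of this very degeneration is the point of the whole paper). You should instead identify the degree-one piece of the groupoid of Construction~\ref{Hodgestack} directly: over $\filstack$ it is by construction the formal completion of the deformation to the normal cone of the diagonal $\Delta \colon X \to X \times X$ in the sense of Fulton--MacPherson/Simpson, and for $X$ smooth the algebra of distributions of that deformation \emph{is} the Rees algebra of $\mathcal{D}_X$ for the order filtration, by the same local computation with a system of \'etale coordinates that underlies Theorem~\ref{theorem classic D module}. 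You flag this as the main obstacle, which is the right instinct; but it needs the direct identification, not an interpolation argument from the two fibers. The secondary points (conservativity of pullback along the nil-isomorphism, preservation of totalizations) are as you say formal in this setting, via Theorem~\ref{gaitsgorymain} and affineness of the groupoid face maps.
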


\begin{rem}
One may  thus think of the $\infty$-category $\QCoh(X_{Hod})$ as giving a $\QCoh(\filstack)$-linear  enhancement of the $\infty$-category of $\mathcal{D}_X$-modules. By the Rees correspondence, this is none other than a filtration at the level of categories, which manifests itself as the well known filtration on the ring of  differential operators $\mathcal{D}_X$. The interested reader may consult  \cite[Remark 3.2.3]{toen2020foliationsgrr} for a generalization of this story to the setting of \emph{derived foliations}.
\end{rem}
\begin{const}
Let $X$ be a derived scheme. Recall from \cite{moulinos2019universal} that we may form the following derived mapping space 
$$
\mathcal{L}_{fil} X := \Map_{\on{dStk}_{\filstack}}(S^1_{fil}, X|_{\filstack})
$$
This is referred to in loc. cit. as the filtered loop space. By the work in Section \ref{cogroupoidcircle}, $S^1_{fil}$ fits as the degree $(1,1)$ piece of the 2-cogroupoid object $S^{1, \bullet}$. We take this into account by defining 
$$
\Map_{\on{dStk}_{\filstack}}(S_{fil}^{1, \bullet, \bullet}, X_{\filstack})
$$
which in turn, is a 2-groupoid in $\on{dStk}_{/ \filstack}$, over $X|_{\filstack}$. 
\end{const}

One has the following proposition:

\begin{prop} \label{loopsalreadycomplete}
Let $X$ be a derived scheme. Then for each $(m,n)$ the derived scheme $\mathcal{L}^{(m,n)}(X)$ is formally complete along $X|_{\filstack} \to \mathcal{L}^{(m,n)}(X)$. 
\end{prop}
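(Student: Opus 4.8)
The plan is to reduce the claim to a computation over each of the two fibers of $\filstack$, exploiting the fact that formal completeness can be checked pointwise. Specifically, the statement that $\mathcal{L}^{(m,n)}(X)$ is formally complete along $X|_{\filstack}$ means that the map $X|_{\filstack} \to \mathcal{L}^{(m,n)}(X)$ is a nil-isomorphism and that $\mathcal{L}^{(m,n)}(X)$ coincides with its own formal completion along this map. First I would recall that $\mathcal{L}^{(m,n)}(X) = \Map_{\filstack}(S^{1,m,n}_{fil}, X|_{\filstack})$, and that by Corollary \ref{recoveryoffilteredcircle} and Proposition \ref{groupoidalnature} the relevant mapping stacks are insensitive to the distinction between $S^{1,m,n}_{fil}$ and its unaffinized counterpart $\Sigma^2 \mathcal{Q}^{m,n}$.

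The key reduction is that formal completeness along a map is a condition that can be verified after pulling back along the two structure maps $\eta: \Spec k \to \filstack$ and $\iota: B\GG_m \to \filstack$, since these jointly detect nil-isomorphisms and formal-completion structure for the laft stacks in play. Pulling back along $\eta$, the stack $S^{1,m,n}_{fil}$ specializes to a (constant, by the cited result of \cite{moulinos2019universal}) sphere-like object, and the mapping stack becomes a variant of the usual derived loop space $\mathcal{L}X = \Map(S^1, X)$; here one invokes the classical fact, recalled in the introduction, that $\mathcal{L}X$ is formally complete along the constant-loops inclusion $X \to \mathcal{L}X$ precisely because $S^1$ is built as a suspension and $X$ is a scheme, so that the loops are infinitesimally close to the constants. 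Pulling back along $\iota$, the mapping stack specializes to a shifted-tangent-type object $\mathcal{T}_X[-1]$ (or its iterated analogue in bidegree $(m,n)$), which is formally complete along its zero section by construction, being the formal completion of a vector-bundle-like stack.

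Concretely, the steps I would carry out are: (i) identify, in each bidegree $(m,n)$, the generic and special fibers of $\mathcal{L}^{(m,n)}(X)$ using the product/Segal decomposition coming from the cogroupoid structure established in Proposition \ref{groupoidalnature}, reducing to iterated fiber products of the $(1,1)$ case; (ii) verify that both fibers are formally complete along the image of $X$ — on the de Rham ($\eta$) side because the derived loop space of a scheme is formally complete along constant loops, and on the Dolbeault ($\iota$) side because the shifted tangent bundle is, by definition, formally completed along its zero section; and (iii) conclude by a descent/gluing argument over $\filstack$ that formal completeness holds for the total family, since it holds on both fibers and $\{\eta, \iota\}$ is jointly conservative for this property on laft stacks with deformation theory.

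The main obstacle I expect is step (iii): promoting fiberwise formal completeness to a statement over the whole base $\filstack$. Formal completeness is not in general a property that descends from a jointly conservative family of points unless one controls the behaviour of the cotangent complex and the nil-isomorphism condition uniformly in the family. The cleanest way around this is to argue directly that the map $X|_{\filstack} \to \mathcal{L}^{(m,n)}(X)$ is a nil-isomorphism over $\filstack$ — which follows because the underlying reduced stacks agree after base change to both points and $\filstack_{red}$ is detected by these — and then to identify $\mathcal{L}^{(m,n)}(X)$ with the formal completion $[\mathcal{L}^{(m,n)}(X)]^{\wedge}_{X|_{\filstack}}$ by checking the defining pullback square of Section \ref{section formal completions} commutes with restriction to both fibers, where we already know the completion is an equivalence. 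This final identification is where care is needed, but it reduces to the fact that formation of de Rham spaces and formal completions is compatible with the flat base changes $\eta$ and $\iota$.
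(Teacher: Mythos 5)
Your overall strategy is sound in outline: formal completeness along $X|_{\filstack}\to\mathcal{L}^{(m,n)}_{fil}(X)$ reduces to the map being a nil-isomorphism, the Segal/product decomposition reduces everything to bidegree $(1,1)$, and your two fiber computations (the derived loop space of a scheme is formally complete along constant loops; $\mathcal{T}_X[-1]$ has the same truncation as $X$ because $\mathbb{L}_X[1]$ is $1$-connective) are both correct. The genuine gap is exactly where you suspect it: step (iii). The principle that $\{\eta,\iota\}$ jointly detect equivalences (or nil-isomorphisms) of stacks over $\filstack$ is false. For instance, the map $[\GG_m/\GG_m]\sqcup B\GG_m\to\filstack$ pulls back to an equivalence along both $\eta$ and $\iota$ but is not an equivalence; the two distinguished points form a stratification of $\filstack$, not a cover, and isomorphisms of (reduced) stacks are not detected on strata. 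Your fallback --- checking that the reduced stacks agree after base change to the two points --- runs into the same obstruction, since a general reduced affine point $\Spec A\to\filstack$ need not factor through either $\eta$ or $\iota$. To salvage the fiberwise route you would need to invoke that $\mathcal{L}^{(m,n)}_{fil}(X)\to X|_{\filstack}$ is relatively affine, so that the assertion becomes one about a map of filtered quasi-coherent algebras, where the pair $(\eta^*,\iota^*)=(\on{Und},\gr)$ \emph{is} jointly conservative; but you do not make this reduction, and even then one must deal with the fact that $\iota$ is not flat, so $\iota^*$ need not commute with truncation.

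The paper avoids the gluing problem entirely by arguing directly over $\filstack$: in bidegree $(1,1)$ the filtered circle is $B\mathbb{H}$ for the filtered group scheme $\mathbb{H}$ of \cite{moulinos2019universal}, and since $X$ is a scheme with no stacky behavior there are no nonconstant maps $B\mathbb{H}\to X$; hence the truncation of $\mathcal{L}_{fil}(X)$ agrees with that of $X|_{\filstack}$ uniformly over the base, giving the nil-isomorphism in one stroke, after which the remaining bidegrees follow from the $E_2$-groupoid symmetries just as you propose. I would either adopt that direct argument or supply the relative-affineness reduction explicitly before the fiberwise check can be accepted.
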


\begin{proof}
In degree $(1,1)$, the cogroupoid object ${S^1}^{\bullet, \bullet}_{fil}$ is precisely $B \mathbb{H}$ where $\mathbb{H}$ is the filtered group scheme from \cite{moulinos2019universal}. After forming mapping spaces out of this, one obtains a derived scheme whose truncation is the truncation of $X$ itself. This uses the fact that $X$ is a scheme and does not exhibit any stacky behavior, and so there are no nonconstant maps $B \mathbb{H} \to X$. Thus the map $S^1_{Fil} \to *$ induces an equivalence on truncations $\mathcal{L}_{fil}X$, which is in particular a nil-isomorphism. 
Now that we know that the degree $1$ piece is formally complete, we use the intrinsic symmetries along the diagonal that follow from the $E_2$-groupoid structure, which allow for us to conclude that it is formally complete in each bidegree. Hence we conclude that this is a formal groupoid, in the sense of \cite{gaitsgory2020study}.
\end{proof}

Hence, $\mathcal{L}_{fil}(X)$ is a  $2$-groupoid  object in formal stacks. Moreover as a bisimplicial object, this has the constant simplicial diagram on $X|_{\filstack}$ on the zeroth row and column. Our next goal is to  compute the $2$-fold delooping of this groupoid, in the setting of formal moduli problems. Before passing to the formal context however, one can make the following observation:
 
 \begin{prop} \label{identification2groupoids}
There is an equivalence  
$$
\mathcal{L}^{\bullet, \bullet}_{fil}(X) \simeq Nerve(X|_{\filstack} \to \Map(S^{0, \bullet}_{fil}, X|_{\filstack}))
$$
of 2-groupoid objects in $\on{dStk}_{\filstack}$. 
\end{prop}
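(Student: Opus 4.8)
The plan is to dualize the conerve presentation of the $E_2$-cogroupoid $\circlegroupoid$ into the Čech nerve appearing on the right, using that forming mapping stacks into $X|_{\filstack}$ is a contravariant functor carrying colimits to limits.

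First I would reduce to the non-affine model. By Construction \ref{version2}, $\Sigma S^0_{fil} = \on{coNerve}(\phi)$ for the map $\phi: S^{0,\bullet}_{fil} \to *$ of cosimplicial derived stacks, where $*$ denotes the constant cosimplicial object at $\filstack$. By Proposition \ref{groupoidalnature}, the levelwise affinization morphism induces a levelwise equivalence
$$
\mathcal{L}^{\bullet,\bullet}_{fil}(X) = \Map_{\filstack}(\circlegroupoid, X|_{\filstack}) \xrightarrow{\ \sim\ } \Map_{\filstack}(\Sigma S^0_{fil}, X|_{\filstack}),
$$
so it suffices to identify the target with $\on{Nerve}(X|_{\filstack} \to \Map(S^{0,\bullet}_{fil}, X|_{\filstack}))$.

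The key step is that the internal mapping functor $\Map_{\filstack}(-, X|_{\filstack})$ sends colimits to limits, by the defining adjunction of the internal mapping stack in the Cartesian closed $\infty$-topos $\on{dStk}_{\filstack}$. Applying it levelwise to $\on{coNerve}(\phi)$—whose term in outer degree $p$ is the iterated pushout of $(p+1)$ copies of $*$ amalgamated over $S^{0,\bullet}_{fil}$, formed in cosimplicial stacks—turns each pushout into the corresponding pullback. Using $\Map_{\filstack}(*, X|_{\filstack}) \simeq X|_{\filstack}$ together with $\Map_{\filstack}(S^{0,\bullet}_{fil}, X|_{\filstack}) = \Map(S^{0,\bullet}_{fil}, X|_{\filstack})$, the outer degree $p$ term becomes the $(p+1)$-fold fiber product of $X|_{\filstack}$ over $\Map(S^{0,\bullet}_{fil}, X|_{\filstack})$, which is exactly the degree $p$ term of the asserted Čech nerve.

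Finally I would record compatibility of the two directions: on both sides the inner simplicial direction is the one induced by the cogroupoid $S^{0,\bullet}_{fil}$, producing the groupoid $\Map(S^{0,\bullet}_{fil}, X|_{\filstack})$, while the outer direction is the conerve index on the left and the nerve index on the right. All face and degeneracy maps are images under $\Map_{\filstack}(-, X|_{\filstack})$ of the structure maps of $\phi$, so they match, giving an equivalence of $2$-groupoid objects in $\on{dStk}_{\filstack}$. The main obstacle is precisely this last bookkeeping—checking that the inner and outer cosimplicial indices dualize to the correct simplicial indices and that the Segal/groupoid conditions transport intact; once the colimit-preservation of the mapping functor is in place, the identification is formal, with no further geometric input required.
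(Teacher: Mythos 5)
Your proposal is correct and follows essentially the same route as the paper: both arguments rest on Proposition \ref{groupoidalnature} (the affinization universal property) to justify that $\Map_{\filstack}(-, X|_{\filstack})$ carries the iterated pushouts of $*$ over $S^{0,\bullet}_{fil}$ defining the cogroupoid to the iterated fiber products of $X|_{\filstack}$ over $\Map(S^{0,\bullet}_{fil}, X|_{\filstack})$, identified degreewise with the \v{C}ech nerve. The only cosmetic difference is that you first pass to the non-affinized conerve of Construction \ref{version2} and then dualize formally, whereas the paper writes out $(S^1_{fil})^{n,m}$ explicitly and performs the identification in each bidegree at once.
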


\begin{proof}
This will follow by explicitly identifying the two objects in $\on{Grpd}^{(2)}$. The argument ultimately boils down to the fact that both simplicial objects depend on the $E_2$-cogroupoid structure of $S^1_{fil}$ (more precisely, its structure as a cogroupoid object in the category of cogroupoid objects). At the level of objects, we fix $(n,m) \in \Delta^{op} \times \Delta^{op}$. Then in bisimplicial degree $(n,m)$, the object $\mathcal{L}^{\bullet, \bullet}_{fil}(X)$ is given by 
$$
\Map_{\filstack}((S_{fil}^1)^{n,m}, X|_{\filstack}).
$$
where
\begin{equation}
(S_{fil}^1)^{n,m}=  \overbrace{* \bigsqcup_{\underbrace{S^0_{fil} \sqcup_{*} \cdots \sqcup_{*} S^0_{fil}}_{n \, \on{  times} }}* \cdots * \bigsqcup_{\underbrace{S^0_{fil} \sqcup_{*} \cdots \sqcup_{*} S^0_{fil}}_{m \, \on{  times} }} *}^{m \, \on{  times} }.
\end{equation}
Note that $* =\filstack$ in this case as all these constructions are being performed in $\on{Stk}_{\filstack}$.  
Applying the functor $\Map_{\on{dStk}_{/ \filstack}}(-, X|_{\filstack})$ out of the above object sends this push-out square to a fiber product, by the argument in the proof of Proposition \ref{groupoidalnature}. A moment's thought shows that this is precisely the $(n,m)$th degree of the bisimplicial object 
$$
\on{Nerve}(X|_{\filstack} \to \Map(S^0_{fil}, X|_{\filstack}))
$$
\end{proof}

\noindent Passing to the setting of formal moduli problems relative to $X|_{\filstack}$ now gives rise to the following corollary:

\begin{cor}
There exists an equivalence
$$
B^{(1)} \mathcal{L}_{fil}^{\bullet, \bullet}(X) \simeq X^\bullet_\lambda 
$$
of formal groupoid objects. Here the left hand side denotes the delooping of the groupoid object $\mathcal{L}_{fil}(X)$ along the vertical or horizontal direction, and the right side is the formal groupoid whose classifying stack  is the Hodge degeneration.
\end{cor}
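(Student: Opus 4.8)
The plan is to combine the two key results already established in the excerpt: Proposition \ref{identification2groupoids}, which identifies the $E_2$-groupoid $\mathcal{L}^{\bullet,\bullet}_{fil}(X)$ with the iterated nerve $\on{Nerve}(X|_{\filstack} \to \Map(S^{0,\bullet}_{fil}, X|_{\filstack}))$, and Proposition \ref{loopsalreadycomplete}, which guarantees that in each bidegree $\mathcal{L}^{(m,n)}(X)$ is formally complete along $X|_{\filstack}$, so that we are genuinely working with a formal groupoid in the sense of \cite{gaitsgory2020study}. The target $X^\bullet_\lambda$ was defined in Construction \ref{Hodgestack} as the levelwise formal completion of the groupoid $\Map_{\filstack}(S^{0,\bullet}, X|_{\filstack})$ along the unit map $X|_{\filstack} \to \Map_{\filstack}(S^{0,\bullet}, X|_{\filstack})$. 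So the statement asks us to show that delooping $\mathcal{L}^{\bullet,\bullet}_{fil}(X)$ once (say in the vertical direction) recovers precisely this formally-completed simplicial stack.

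First I would set up the comparison degreewise. Fix the simplicial direction we are delooping and let the other index range. By Proposition \ref{identification2groupoids}, in bidegree $(m,n)$ the object is $\Map_{\filstack}((S^1_{fil})^{m,n}, X|_{\filstack})$, and the explicit pushout formula for $(S^1_{fil})^{m,n}$ given there shows that collapsing one simplicial coordinate via the conerve structure turns the two-fold iterated conerve back into the single conerve $S^{0,\bullet}_{fil}$ with the outer suspension unwound. The core mechanism is that $B^{(1)}$ (the one-fold delooping) inverts one layer of the conerve/nerve adjunction: applying $\Map_{\filstack}(-, X|_{\filstack})$ sends the defining pushout squares of the $E_2$-cogroupoid to pullback squares (again by the argument in the proof of Proposition \ref{groupoidalnature}, i.e. the affinization/mapping-stack exchange of colimits for limits), so the Segal-type conditions match up and the delooping of the nerve in one direction is computed levelwise as $\Map_{\filstack}(S^{0,\bullet}, X|_{\filstack})$.

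The remaining and most delicate point is to match the \emph{formal completion}: the Hodge stack $X^\bullet_\lambda$ is not literally $\Map_{\filstack}(S^{0,\bullet}, X|_{\filstack})$ but its levelwise completion along $X|_{\filstack}$, whereas the delooping $B^{(1)}\mathcal{L}^{\bullet,\bullet}_{fil}(X)$ is being formed intrinsically in the category of formal moduli problems over $X|_{\filstack}$. Here I would invoke Theorem \ref{gaitsgorymain} together with Proposition \ref{loopsalreadycomplete}: because each $\mathcal{L}^{(m,n)}(X)$ is already formally complete along $X|_{\filstack}$, the one-fold delooping taken in $\on{FormMod}_{X|_{\filstack}/}$ automatically lands in formally complete objects, and its value in each degree is the formal completion of the corresponding mapping stack. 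Concretely, forming the classifying object of the formal groupoid and forming the levelwise formal completion of $\Map_{\filstack}(S^{0,\bullet}, X|_{\filstack})$ along $X|_{\filstack}$ are the same operation, since both are governed by the Čech-nerve recovery of nil-isomorphisms reviewed in Section \ref{section formal completions}. The main obstacle, then, is not the abstract delooping but verifying that the completion commutes with delooping in the correct direction — i.e. that completing levelwise and then deglooping agrees with deglooping the already-complete loop groupoid. I expect this to follow from the equivalence $B_{X|_{\filstack}}$ of Theorem \ref{gaitsgorymain} being compatible with base change and with the construction of the formal completion as a fiber product over de Rham stacks, so that the pullback square defining $X^\bullet_\lambda$ is precisely the image under $B^{(1)}$ of the nerve identification of Proposition \ref{identification2groupoids}.
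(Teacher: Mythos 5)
Your proposal is correct and follows essentially the same route as the paper: identify $\mathcal{L}^{\bullet,\bullet}_{fil}(X)$ with $\on{Nerve}(X|_{\filstack} \to \Map(S^{0,\bullet}_{fil}, X|_{\filstack}))$ via Proposition \ref{identification2groupoids}, use Proposition \ref{loopsalreadycomplete} to place everything in formal groupoids, and then apply Theorem \ref{gaitsgorymain} so that the one-fold delooping is computed as the levelwise formal completion of $\Map_{\filstack}(S^{0,\bullet}_{fil}, X|_{\filstack})$ along $X|_{\filstack}$, which is $X^{\bullet}_{\lambda}$ by Construction \ref{Hodgestack}. The extra care you take about completion commuting with delooping is handled in the paper exactly as you anticipate, through the Čech-nerve characterization of nil-isomorphisms.
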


\begin{proof}

We first remark that the delooping of the $E_2$-groupoid will be a groupoid object in formal stacks, by Theorem \ref{gaitsgorymain}. Now, as we saw in Proposition \ref{identification2groupoids}, there is an identification of  $2$-groupoids 
$$
\mathcal{L}^{\bullet, \bullet}_{fil}(X) \simeq Nerve(X|_{\filstack} \to \Map(S^{0, \bullet}_{fil}, X|_{\filstack})).
$$
By Proposition \ref{loopsalreadycomplete}, the left hand side is already formally complete in each simplicial degree; thus we may conclude that the right hand side is a formal $2$-groupoid as well.

Next we apply the correspondence between formal groupoids and formal moduli problems under $X$, cf. Theorem \ref{gaitsgorymain}. This tells us that the $2$-groupoid $\mathcal{L}_{fil}^{\bullet, \bullet}(X)$ arises as the Čech nerve of some map $X|_{\filstack} \to \EuScript{Y}^{\bullet}$, which is a nil-isomorphism in the language of \cite{gaitsgory2020study}. We may identify this morphism of simplicial objects with the morphism which in simplicial degree $n$ is given by the map 
$$
X|_{\filstack} \to \Map( S^{0, n}_{fil}, X|_{\filstack})\widehat{_{X|_{\filstack}}}= X^\bullet_{\lambda},
$$
i.e. the map of $X|_{\filstack}$ to its formal thickening in $\Map( S^{0, n}_{fil}, X|_{\filstack})\widehat{_{X|_{\filstack}}}$. Thus, we have identified the $1$-groupoid object  $B \mathcal{L}_{fil}^{\bullet, \bullet}X$ with  $X_{\lambda}^{\bullet}$. Note that the structure maps agree by construction; indeed $X_{\lambda}$ was defined in Construction \ref{Hodgestack} precisely as the levelwise formal completion of the groupoid object $\Map(S^{0,\bullet}_{fil}, X|_{\filstack})$ along the map 
$$
X|_{\filstack} \to \Map(S^{0,\bullet}_{fil}, X|_{\filstack}). 
$$
\end{proof}

We may now put this all together and quickly prove the main theorem, which we restate for the reader's convenience. 
\begin{thm}
There exists an equivalence 
$$
B^2 \mathcal{L}^{fil}(X) \simeq X_{Hod} 
$$
\end{thm}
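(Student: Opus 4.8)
The plan is to assemble the main theorem as a two-step delooping, reducing it to the intermediate result already established in the preceding corollary. Recall that the previous corollary identifies the first delooping
$$
B^{(1)} \mathcal{L}^{\bullet, \bullet}_{fil}(X) \simeq X^\bullet_\lambda
$$
as an equivalence of formal groupoid objects over $X|_{\filstack}$, where $X^\bullet_\lambda$ is the levelwise formal completion of the groupoid object $\Map_{\filstack}(S^{0, \bullet}, X|_{\filstack})$ along the unit map. Since $X_{Hod} = \|X^\bullet_\lambda\|$ by Definition \ref{therealhodgestack}, the entire theorem will follow once I take the remaining delooping of this $1$-groupoid. So the proof is essentially a bookkeeping argument: $B^{(2)} = B \circ B^{(1)}$ on the level of $E_2$-groupoids, and the inner delooping is handled by the corollary while the outer one is the defining colimit of $X_{Hod}$.

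First I would invoke Proposition \ref{loopsalreadycomplete} to guarantee that $\mathcal{L}^{\bullet, \bullet}_{fil}(X)$ is genuinely a formal $E_2$-groupoid, i.e. an $E_2$-groupoid object in $\on{FMP}_{/ X|_{\filstack}}$, so that the iterated Gaitsgory--Rozenblyum correspondence of the Corollary to Theorem \ref{gaitsgorymain} applies and $B^{(2)}_{X|_{\filstack}}$ lands in $\on{FormMod}_{X|_{\filstack}/}$. Next I would write $B^{(2)} \mathcal{L}^{\bullet, \bullet}_{fil}(X) \simeq B\bigl(B^{(1)} \mathcal{L}^{\bullet, \bullet}_{fil}(X)\bigr)$, which is legitimate precisely because the delooping functor can be computed in either the horizontal or vertical simplicial direction first (both compute the same bi-delooping, as the $E_2$-structure makes the two directions interchangeable). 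Substituting the preceding corollary then gives
$$
B^{(2)} \mathcal{L}^{\bullet, \bullet}_{fil}(X) \simeq B\bigl(X^\bullet_\lambda\bigr).
$$
Finally, applying Definition \ref{therealhodgestack} identifies $B(X^\bullet_\lambda) = \|X^\bullet_\lambda\| = X_{Hod}$, completing the chain of equivalences.

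The step I expect to be the main obstacle is verifying that the two deloopings commute in the correct sense, that is, that forming $B^{(2)}$ of the $E_2$-groupoid really does agree with first taking the single delooping produced by the earlier corollary (in one direction) and then deloooping the resulting $1$-groupoid (in the other). Concretely, I need to know that $B^{(1)} \mathcal{L}^{\bullet, \bullet}_{fil}(X)$, which the corollary produced as a formal groupoid, is exactly the groupoid whose classifying object $X_{Hod}$ is defined to be, and that the residual simplicial direction surviving after the first delooping matches the simplicial direction along which $X^\bullet_\lambda$ is deloooped in Definition \ref{therealhodgestack}. This is a compatibility of the bi-simplicial indexing with the two applications of $B_{X|_{\filstack}}$; it is ensured by the symmetry of the $E_2$-structure established in Proposition \ref{groupoidalnature}, but it must be checked that the structure maps of $X^\bullet_\lambda$ as a formal groupoid coincide with those arising from the second simplicial coordinate of $\mathcal{L}^{\bullet, \bullet}_{fil}(X)$. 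Once this identification of structure maps is in place, the remaining assertions are immediate from the cited results, and no genuine computation is required.
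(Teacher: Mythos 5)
Your proposal follows essentially the same route as the paper's own proof: decompose $B^{(2)}$ as two successive deloopings, use the preceding corollary to identify the first delooping with the formal groupoid $X^{\bullet}_{\lambda}$, and then invoke Definition \ref{therealhodgestack} to recognize the second delooping as $X_{Hod}$. The additional care you take about the commutation of the horizontal and vertical delooping directions and the matching of structure maps is a reasonable elaboration of what the paper passes over in a single sentence, but it is not a different argument.
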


\begin{proof}
We can compute $B^2 \mathcal{L}^{fil}(X)$ by first delooping in the vertical direction and then in the horizontal direction (or in reverse). As we saw in the previous proposition, the stage one delooping recovers the formal groupoid $X^{\bullet}_{\lambda}$. By Definition \ref{therealhodgestack}, the delooping of this is precisely the Hodge stack $X_{Hod}$.  
\end{proof}

\section{Group structures on the loop space and the Todd class}
Let $f: X \to Y$ be a proper morphism of schemes. A modern reinterpration of the  Grothendieck Riemann-Roch theorem encodes the compatibility between the Chern character and the proper pushforward on Hodge cohomology, which arises from the pushforward $f_{*}: \Perf(X) \to \Perf(Y)$. As described in \cite[Section 5.2]{hoyois2021categorified}, the HKR equivalence 
$$
R\Gamma(\OO_{\mathcal{L}X}) \simeq \Sym(\Omega_{X/k}[1])
$$
intertwines the ``integration map"  $\OO(\mathcal{L}(X)) \to \OO(\mathcal{L}(Y)$ with the pushforward map on Hodge cohomology, twisted by the  square root of the \emph{Todd class}:

\begin{equation*}
 \xymatrixcolsep{5pc} \xymatrix{
& K_0(X) \ar[d]_{\on{ch}} \ar[r]^{f_*} &  K_0(Y) \ar[d]_{\on{ch}}\\
 & \oplus_i H^i(X, \Omega_X^{i})  \ar[r]^{f_*( \cup \sqrt{\on{td}(\mathbb{T}_f)}} & \oplus_i H^i(Y, \Omega_Y^{i})
}   
\end{equation*}

In this section we give a conceptual explanation for the Todd class in terms of the $2$-cogroupoid structure on $S^1_{fil}$ studied in this paper. In particular we will see that it arises from the failure of $S^1_{fil}$ to be a constant degeneration of cogroup(oid) objects. 

\begin{const}\label{groupstructures}
Recall from above that $2$-cogroupoid structure on $S^1_{fil}$ gives the filtered loop space  $\mathcal{L}_{fil}(X)$a $2$-groupoid structure. In particular, it may be viewed as an $E_2$-group object over the derived scheme $X$ as $X$ will be the degree $(0,0)$ space of objects. 
\end{const}
 
We would like to study the corresponding group stuctures on $\mathcal{L}_{fil}(X)$ upon specializing along the closed and generic points of $\filstack$. Recall from \cite{moulinos2019universal}, that over $\eta : \Spec k \to \filstack$, there is an equivalence 
$$
(S^1_{fil})^{u} := \eta^*(S^1_{fil}) \simeq B \GG_a
$$
of group stacks. Similarly one has an equivalence 
$$
S^1_{gr} := \iota^*(S^1_{fil}) \simeq B \GG_a
$$
We remind the reader that this is only true in characteristic zero. Thus one has an HKR equivalence (cf. \cite{toen2011algebres, ben2012loop})
$$
\on{exp}: \shiftedtangent \simeq  \loopsX  
$$

We will see that there exist two group structures on $\mathbb{T}_X [-1]$, one which is related to the cogroupoid structure on $S^1_{gr}$, and the other which is related to that on $S^1$. The extent to which these are nonequivalent is detected by a distinguished class in 
$$
\pi_0 \Gamma(\shiftedtangent, \OO_{\shiftedtangent})^\times \cong \oplus_i H^i(X, \Omega^i_X)
$$ 
This is none other than the Todd class. 

\begin{rem}
These group objects  $\mathcal{L}(X)$ and $\mathbb{T}_{X}[-1]$ over $X$ are in fact formally complete along $X$. Thus we may view them as group objects in formal moduli problems over $X$.  
\end{rem}
\subsection{Formal groups and the Todd class} \label{section formal groups todd}
We now review how a group structure gives rise to an orientation of the canonical bundle on $\mathbb{T}_X [-1]$. Much of the following discussion is taken from \cite{kondyrev2019equivariant}. 

\begin{const} \label{trivializationtangent}
We first review, following \emph{loc. cit.} the canonical  trivialization of the relative tangent bundle of a formal group over $X$. Let $\formalgroup$ be the formal group in question. In this setting one has the following trivialization 
$$
\mathbb{T}_{\formalgroup/ X} \simeq \pi^* \mathbb{T}_{X / B_{X} \formalgroup} \simeq \pi ^* e^* \pi^* \mathbb{T}_{X / B_{X} \formalgroup} \simeq \pi ^* e^* \mathbb{T}_{\formalgroup /X} \simeq \pi ^* \mathfrak{g},
 $$
 where $\mathfrak{g} := \on{Lie}_{X}(\formalgroup)$ is the corresponding Lie algebra of $\formalgroup$. One uses here the key property of the relative tangent sheaf being stable under pullbacks, cf. \cite[Proposition 5.1.8]{kondyrev2019equivariant}. 
\end{const}

\begin{rem}
Let $\formalgroup = \mathcal{L}X$. Using the above, one obtains an orientation 
$$
\omega_{\formalgroup} \simeq \omega_{\formalgroup / X} \otimes \pi^*(\omega_X) \simeq \pi^* (\omega_{X}^{-1}) \otimes \pi^*(\omega_X)   \simeq \OO_{\formalgroup}
$$

\end{rem}

\begin{const}
Recall the construction of the determinant of a perfect complex from \cite{toendeterminants}. This is defined as a morphism of stacks 
$$
\on{det}: \mathbf{Perf} \to \mathbf{Pic}
$$
where the left hand side  is the derived stack classifying perfect complexes and the right hand side classifies invertible objects. Now, we fix a formal derived stack $\EuScript{Y}$ over $X$ whose relative tangent complex is perfect, and on which one may equip  two distinct formal group structures $g_1, g_2$. By composing the trivialization of $\mathbb{T}_{\EuScript{Y}}$ arising from $g_1$ with the inverse of that arising from $g_2$, we obtain an automorphism
$$
\gamma: \mathbb{T}_{\EuScript{Y}/ X} \xrightarrow{\iota_{u}} \pi^*(\mathfrak{g}) \xrightarrow{\iota_{gr}} \mathbb{T}_{\EuScript{Y}/ X}
$$
This is an endomorphism of the relative tangent complex of $\Y$ over $X$. 
We now define 
$$
\on{td}^{grp}(\EuScript{Y}):= \on{det}(\gamma) \in \pi_0\Gamma(\EuScript{Y}, \OO_{\EuScript{Y}})^{\times},
$$
by way of the induced map $\on{det}_{\EuScript{Y}}: \mathbf{Perf}(\EuScript{Y}) \to \mathbf{ Pic}(\EuScript{Y})$  
\end{const}

\begin{rem}
Let us remark for the sake of clarity that $\on{det}(\gamma)$ is in fact an invertible element of $\Map_{\OO_{\EuScript{Y}}}(\pi^* \omega_X^{-1},\pi^* \omega_X^{-1} )$. However, note that this canonically equivalent to $\Map_{\OO_{\EuScript{Y}}}(\OO_{\EuScript{Y}},\OO_{\EuScript{Y}}) \simeq \OO_{\EuScript{Y}}$. Thus we obtain a well defined invertible  element of $\pi_0\Gamma(\EuScript{Y}, \OO_{\EuScript{Y}})^{\times}$

\end{rem}

\begin{rem}
Let $X$ be a derived scheme and  fix $\EuScript{Y}= \loopsX \simeq \shiftedtangent$. This obtains two group structures, one arising from the cogroup structure on $(S^1_{fil})^{u}$, and the other coming from the cogroup structure on the $S^1_{gr} = \Spec^{\Delta}(k \oplus k[-1])$. Then 
$$
\on{td}^{grp}(\shiftedtangent) \in \pi_0 \Gamma(\shiftedtangent, \OO_{\shiftedtangent})^{\times} \cong (\bigoplus_{i} H^i(X, \Omega^i_{X}))^{\times}
$$
\end{rem}

In \cite{kondyrev2019equivariant} it is shown that this recovers the Todd class of a scheme. We briefly recall a broad explanation for why this is true.  

\begin{prop}[\cite{kondyrev2019equivariant}]
Let $X$ be a smooth and proper scheme. Then the group theoretic Todd class, defined above, recovers the classical Todd class of the Lie algebroid $\mathbb{T}_X$. 
\end{prop}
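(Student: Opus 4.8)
The plan is to reduce the statement to the main computation of \cite{kondyrev2019equivariant} by checking that the two formal group structures on $\shiftedtangent \simeq \loopsX$ entering the definition of $\on{td}^{grp}$ coincide with the two structures considered there, and then to recall the mechanism by which the comparison automorphism $\gamma$ has determinant equal to the Todd generating series of the tangent Lie algebroid $\mathbb{T}_X$.

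First I would observe that both group structures share the same relative Lie algebra $\mathfrak{g} \simeq \shiftedtangent$, obtained by restricting $\mathbb{T}_{\EuScript{Y}/X}$ along the identity section, so that the trivialization of Construction \ref{trivializationtangent} applies to each and produces two isomorphisms $\iota_{gr}, \iota_{u}\colon \mathbb{T}_{\shiftedtangent/X} \xrightarrow{\sim} \pi^*\mathfrak{g}$. The structure coming from $S^1_{gr} \simeq B\GG_a$ is, by the computation in \cite{moulinos2019universal} recalled above, the additive (linear) group structure on the vector bundle $\shiftedtangent = \Spec_X \Sym(\Omega_X[1])$; for a vector group, left translation is affine and the left-invariant trivialization $\iota_{gr}$ is the tautological one coming from the constancy of the tangent sheaf of a vector bundle.

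Next I would identify the structure $\iota_{u}$ transported from the loop-rotation (pinch) cogroup on $S^1 = \eta^*(S^1_{fil})$ along the HKR equivalence $\on{exp}\colon \shiftedtangent \simeq \loopsX$. Here $\loopsX$ is a genuine formal group over $X$ whose group law is governed by the Lie algebroid bracket of $\mathbb{T}_X$, and the left-invariant trivialization $\iota_{u}$ differs from $\iota_{gr}$ exactly by the Jacobian of the formal exponential of $\mathbb{T}_X$. Consequently the automorphism $\gamma$ of $\mathbb{T}_{\shiftedtangent/X}$ obtained by composing $\iota_{u}$ with $\iota_{gr}$ acts as multiplication by this Jacobian, which by the Baker--Campbell--Hausdorff formula for a Lie algebroid is the universal power series $\det\!\big((1-e^{-\theta})/\theta\big)$ evaluated on the operator $\theta$ measuring the infinitesimal failure of $\on{exp}$ to be linear, i.e. the Atiyah class of $\mathbb{T}_X$ in the sense of \cite{markarian2009atiyah}.

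Finally I would compute $\on{td}^{grp}(\shiftedtangent) = \on{det}(\gamma)$ by taking determinants: the series $\det\!\big(\theta/(1-e^{-\theta})\big)$ in the Atiyah class is, by the Kapranov--Markarian description \cite{markarian2009atiyah}, precisely the Todd class of $\mathbb{T}_X$ inside $\pi_0\Gamma(\shiftedtangent, \OO_{\shiftedtangent})^\times \cong (\bigoplus_i H^i(X,\Omega^i_X))^\times$, giving the claim. The main obstacle is the middle step: rather than the routine determinant bookkeeping, the crux is to show rigorously that transporting the loop-space group structure through HKR produces exactly the exponential trivialization whose Jacobian is the Atiyah-class power series --- that is, to match our cogroup-theoretic $\iota_{u}$ with the left-invariant framing used in \cite{kondyrev2019equivariant} and to pin down the normalization (sign, and the $1-e^{-\theta}$ versus $e^{\theta}-1$ convention) so that one lands on the Todd class rather than its inverse or the $\widehat{A}$-variant.
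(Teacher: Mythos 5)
Your proposal follows essentially the same route as the paper: both reduce to the Kondyrev--Prikhodko computation that the comparison of the two trivializations is $d\on{exp}_{\formalgroup} = (1-e^{-\on{ad}_{\mathfrak g}})/\on{ad}_{\mathfrak g}$ with $\on{ad}_{\mathfrak g}$ given by the Atiyah class, and then take the determinant to land on the Todd series. The normalization issue you flag (Todd versus its inverse, $(1-e^{-x})/x$ versus $x/(1-e^{-x})$) is present in the paper's own statement as well and is not resolved there either.
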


\begin{proof}
The loop group structure on $\mathcal{L}(X)$ makes it into a formal group over $X$. The result follows from a general statement valid for arbitrary formal groups. For a general  formal group $\formalgroup \to X$, let $\mathfrak{g} := \on{Lie}(\formalgroup)$. There exists a general equivalence of formal moduli problems over $X$:
$$
\exp: \mathbb{V}(\mathfrak{g})  \to  \formalgroup
$$
where the right hand side denotes the formal vector bundle stack associated to $\mathfrak{g}$. Via this equivalence, $\mathbb{V}(\mathfrak{g})$ inherits 2 group structures (one being the abelian one, the other via transport of structure from $\formalgroup$). Via the discussion above, this gives rise to two trivializations of the relative tangent $\mathbb{T}_{\formalgroup / X}$ which is denoted suggestively in \cite{kondyrev2019equivariant} as 
\begin{equation} \label{saywatup}
    d\on{exp}_{\formalgroup}: \pi^* \mathfrak{g} \to \pi^* \mathfrak{g} 
\end{equation}

\noindent The determinant (in the sense of \cite{toendeterminants}) of this automorphism then gives the group theoretic Todd class. 

Meanwhile, the Todd class as it appears in the statement of the GRR theorem, is given as a multiplicative characteristic class; it is given by the formula 

$$
\on{td}_X= \det(f(At(X)))
$$
where $f(x)$ is the formal power series: 
$$
f(x) = \frac{1-e^{-x}} {x}
$$
The key result of \cite{kondyrev2019equivariant} states that for an arbitrary formal group $\formalgroup$, the automorphism (\ref{saywatup}) may be expressed as 
$$
   d\on{exp}_{\formalgroup} = \frac{1-e^{- \on{ad}_\mathfrak{g}}} {\on{ad}_\mathfrak{g}},
$$
where $\on{ad}_\mathfrak{g}$ denotes the adjoint representation of the Lie algebra $\mathfrak{g}$, given by the Atiyah class of $\mathfrak{g}$. 
\end{proof}

\subsection{Non-formality of the pinch map}
We would like to relate the construction of the Todd class above to the cogroupoid structures on $S^1_{fil}$. We will see that the data of the Todd class is in a precise sense included in the data of the $E_2$-cogroupoid structure on $S^1_{fil}$. In particular, the nontriviality of the Todd class will follow from the $E_1$ cogroup structure on the topological circle $S^1$, in the $\infty$-category of pointed spaces $\mathcal{S}_*$ This cogroup structure is often described by the well-known ``pinch map"
$$
S^1 \to S^1 \vee S^1
$$
of pointed spaces. 
The goal of this section is to relate the existence of this Todd class with the failure of the resulting cogroup(oid) structure on $S^1$ to be \emph{formal}. We first explain what we mean by this.

\begin{defn}
Let $\EuScript{X} \to \filstack$. Let $\EuScript{X}_1 := \eta^*(\EuScript{X})$. We say  that $\EuScript{X}$ is a \emph{constant degeneration} (of $\EuScript{X}_1$) if there exists an equivalence 
$$
\EuScript{X}_1 \simeq \EuScript{X}_0,
$$
where the right hand side denotes the pullback of $\EuScript{X}$ along the composite map
$$
\Spec k \xrightarrow{\pi} B \GG_m \xrightarrow{\iota} \filstack
$$ 
Similarly let $\EuScript{X}^{\bullet , \cdots, \bullet}$ be $E_n$ cogroupoid object in affine stacks. Then it is a \emph{constant degeneration of cogroupoid objects} if there is an equivalence 
$$
\EuScript{X}_1^\bullet \simeq \EuScript{X}_0^\bullet.
$$
of cogroupoids.
\end{defn}

\begin{rem}
 By   \cite[Proposition 4.5.8]{raksit2020hochschild}, the Postnikov filtration functor  induces a fully faithful embedding $\tau_{\geq * }:  \mathsf{coSCR}_R \hookrightarrow \on{Fil}(\mathsf{coSCR}_R )$. In particular, if $X$ is a topological space, its cochain algebra $R^{X} =C^*(X, R)$ can be promoted, by way of the Postnikov filtration, to a filtered commutative algebra. This moreover degenerates to the cohomology ring $H^*(X, R)$ (viewed as a dga with zero differential) at the level of associated graded.     
\end{rem}

\begin{defn}
Let $A$ be a cosimplicial commutative algebra.  By the above remark, it admits a canonical lift to filtered cosimplicial commutative algebras. By the Rees construction, we may in turn view this as a  cosimplicial commutative algebra in $\QCoh(\filstack)$.  We say that $A$ is formal if the affine stack $\Spec^{\Delta}(A)$ is a constant degeneration  over $\filstack$. Similarly, if  $A_{\bullet}$ is a groupoid object in $\mathsf{coSCR}_R$, then it is formal as a groupoid object if $\Spec^{\Delta}(A)^{\bullet} \to \filstack$ is a constant degeneration of cogroupoids. 
\end{defn}

\begin{rem}
Let $X^{\bullet, \cdots, \bullet}$ be an $E_n$-cogroup(oid) object in spaces. Then, again using \cite[Proposition 4.5.8]{raksit2020hochschild}, we obtain an $E_n$-cogroup(oid) object in affine stacks $\EuScript{X}^{\bullet, \cdots, \bullet}$ over $\filstack$, such that in degree $(1, \cdots, 1)$, there is an equivalence 
$$ 
\EuScript{X}_{1} \simeq \Spec^{\Delta}(C^*(X, k)), \, \, \, \, \EuScript{X}_{0} \simeq \Spec^{\Delta}(H^*(X, k))
$$
\end{rem}

We remark that the $E_2$-cogroupoid structure on $S^1$ discussed thus far contains strictly more structure than the cogroup object $S^1$ in pointed spaces. 

\begin{prop} \label{determination of structures}
The co-group structure on $S^1$ viewed as a pointed space is determined by  the $2$-cogroupoid structure on $S^1$ in the $\infty$-category of (unpointed) spaces. In particular the cosimplicial object describing the cogroup $S^1$ can be recovered as the projection to the $1$st column of $S^{1, \bullet, \bullet}$.  
\end{prop}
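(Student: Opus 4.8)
The plan is to compute the first column of the bicosimplicial space $S^{1,\bullet,\bullet}$ directly from its construction as an iterated conerve and to match it, term by term together with its cosimplicial structure maps, against the conerve that defines the pinch cogroup in the Example of Section \ref{section cogroupoid objects}. Recall that $S^{1,\bullet,\bullet} = \on{coNerve}(\phi^\bullet)$, where $\phi^\bullet \colon S^{0,\bullet} \to \pt^\bullet$ is the map to the final cogroupoid object and the conerve is formed in $\on{coGrpd}(\mathcal{S})$ (Construction \ref{conerveconstruction}). Unwinding this levelwise, exactly as in the proof of Proposition \ref{identification2groupoids}, the bidegree $(n,m)$ space (inner coordinate $n$, outer coordinate $m$) is the iterated pushout
\[
(S^{1,\bullet,\bullet})^{n,m} \simeq \underbrace{\pt \bigsqcup_{S^{0,n}} \pt \bigsqcup_{S^{0,n}} \cdots \bigsqcup_{S^{0,n}} \pt}_{m \text{ copies of } \pt},
\]
where $S^{0,n}$ is the degree-$n$ term of the $S^0$-cogroupoid $S^{0,\bullet} = \on{coNerve}(\emptyset \to \pt)$, i.e. the disjoint union of $n+1$ points. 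Here the outer coordinate $m$ is the new (suspension) cogroupoid direction produced by the conerve, while the inner coordinate $n$ is the pre-existing $S^0$-cogroupoid direction.

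I would take the first column to be the restriction along $\{[1]\} \times \on{id}_{\Delta}\colon \Delta \to \Delta \times \Delta$, i.e. the cosimplicial space $m \mapsto (S^{1,\bullet,\bullet})^{1,m}$. Setting $n = 1$ gives $S^{0,1} = S^0$, so this column is
\[
(S^{1,\bullet,\bullet})^{1,m} \simeq \underbrace{\pt \bigsqcup_{S^0} \cdots \bigsqcup_{S^0} \pt}_{m \text{ copies}} \simeq \bigvee_m S^1,
\]
using $\pt \sqcup_{S^0} \pt \simeq \Sigma S^0 \simeq S^1$ in degree $m=2$. This is precisely, degree by degree, the conerve $\on{coNerve}(\pi \colon S^0 \to \pt)$ appearing in the Example of Section \ref{section cogroupoid objects}: the coface and codegeneracy maps in the $m$-direction are, by construction, the conerve structure maps of $\on{coNerve}(\phi^\bullet)$ evaluated at inner level $1$, and these coincide with those of $\on{coNerve}(\pi)$. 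The corresponding statement for the first row, obtained by fixing $m = 1$ and letting $n$ vary, then follows from the symmetry of the $E_2$-cogroupoid structure, which exchanges the two cogroupoid directions up to equivalence.

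It remains to match this column with the cogroup structure on $S^1$ as a \emph{pointed} space. The column has $(S^{1,\bullet,\bullet})^{1,0} \simeq \pt$, so as a cosimplicial object in $\mathcal{S}$ it is a cogroupoid with contractible term in cosimplicial degree $0$, and the initial-vertex maps $\pt \simeq (S^{1,\bullet,\bullet})^{1,0} \to (S^{1,\bullet,\bullet})^{1,m}$ equip every term with a canonical basepoint. By the Remark of Section \ref{section cogroupoid objects} characterizing cogroups as cogroupoids with initial $0$-term, together with the fact that the zero object $\pt$ is simultaneously initial and final in $\mathcal{S}_*$, this pointed refinement is exactly a cogroup object in $\mathcal{S}_*$, and the Example identifies it with the pinch-map cogroup on $S^1 = \Sigma S^0$. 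Since every coface and codegeneracy of the pinch cogroup thereby appears inside $S^{1,\bullet,\bullet}$, the cogroup structure is determined by the $2$-cogroupoid structure, as claimed. I expect the main obstacle to be precisely this last reconciliation of the unpointed and pointed pictures: one must check that passing from the (unreduced) pushout $\pt \sqcup_{S^0} \pt$ formed in $\mathcal{S}$ to the (reduced) suspension in $\mathcal{S}_*$ is an equivalence compatible with all structure maps — which holds because $S^0$ is well pointed, so that the collapse map from the unreduced to the reduced suspension is an equivalence and is natural across the whole cosimplicial diagram. Everything upstream of this is bookkeeping forced by the conerve construction.
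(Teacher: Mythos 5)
Your proposal is correct and follows essentially the same route as the paper: restrict the iterated conerve $S^{1,\bullet,\bullet}=\on{coNerve}(S^{0,\bullet}\to \pt)$ to the first column, identify it with $\on{coNerve}(S^0\to\pt)$, whose degree-$m$ term is $S^1\vee\cdots\vee S^1$, and read off the pinch cogroup. Your added discussion of the pointed/unpointed comparison (basepoints from the contractible degree-$0$ term, well-pointedness of $S^0$) is a reasonable elaboration of a step the paper leaves implicit, and the only blemishes are harmless off-by-one slips in the conerve indexing (degree $m$ has $m+1$ copies of $\pt$ over $m$ copies of $S^0$, and $\pt\sqcup_{S^0}\pt$ sits in degree $1$, not $2$).
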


\begin{proof}
Recall that 2-cogroupoid object $S^1$ is defined by as the iterated conerve of the map $\emptyset \to \pt $. In particular, this will be the conerve of the map of  cosimplicial spaces  
$$
S^{0, \bullet} \to \pt,
$$
where $\pt$ is viewed as a constant cosimplicial space, and the left hand side is given by $S^0 \sqcup \cdots \sqcup S^0 $ in each degree.  This begets a bicosimplical object, which we see by Section \ref{section cogroupoid objects} is an $E_2$-cogroupoid.  Restricting to the first column (or the first row as this bicosimplicial object will be symmetric along the diagonal) piece, one obtains the cosimplicial object given by the conerve of the map
$$
S^0 \to \pt,
$$
which in degree $n$ is precisely given by $S^1 \vee \cdots \vee S^1$. This exactly  encodes the cogroup structure of $S^1$ as an object in the $\infty$-category of pointed spaces.  
\end{proof}

\begin{cor}
The 2-cogroupoid structure on $S^1$ determines the (formal) group structure on $\mathcal{L}X$ over $X$. 
\end{cor}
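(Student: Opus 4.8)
The plan is to deduce the group structure on $\loopsX$ over $X$ directly from the mapping-out construction applied to the $E_1$-cogroupoid sitting inside the $E_2$-cogroupoid $S^{1,\bullet,\bullet}$. First I would invoke Proposition \ref{determination of structures}: restricting $S^{1,\bullet,\bullet}$ to its first column (equivalently its first row) produces the $E_1$-cogroupoid $S^{1,\bullet}$, namely the conerve of $S^0 \to \pt$ in (unpointed) spaces, with degree-$1$ term $S^1$ and degree-$0$ term $\pt$. This is exactly the data of the pinch-map cogroupoid structure on $S^1$, and it is recorded as a sub-object of the $E_2$-cogroupoid; hence anything built functorially out of it is \emph{determined by} the $E_2$-cogroupoid structure.

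Next I would apply the contravariant functor $\Map(-, X)\colon \on{Stk}^{\op} \to \on{Stk}$ levelwise to $S^{1,\bullet}$. As in the proof of Proposition \ref{groupoidalnature}, this functor carries the defining pushout squares of a cogroupoid object to pullback squares, so $\Map(S^{1,\bullet}, X)$ is a groupoid object in derived stacks; its degree-$1$ term is $\Map(S^1, X) = \loopsX$ and its degree-$0$ term is $\Map(\pt, X) = X$. Thus the cogroupoid structure endows $\loopsX$ with the structure of a groupoid object over $X$.

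The key remaining point — and the step I expect to require the most care — is upgrading this groupoid over $X$ to a genuine \emph{group} object over $X$ and matching it with the expected loop-concatenation structure. Here I would use that $S^1$ is connected, so the two coface maps $\pt \rightrightarrows S^1$ (the two cone points of $\Sigma S^0$) are canonically homotopic; consequently the source and target maps $\loopsX \rightrightarrows X$ both coincide with evaluation at the basepoint, and a groupoid object whose source and target agree is precisely a group object over $X$, which fiberwise is the based loop space with its concatenation product. Finally, for $X$ a scheme the loop space is formally complete along the constant loops $X \to \loopsX$, so by the Gaitsgory--Rozenblyum equivalence (Theorem \ref{gaitsgorymain}) this is a \emph{formal} group over $X$, whose delooping recovers $X_{dR}$. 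Since every ingredient was extracted from the first column of $S^{1,\bullet,\bullet}$, the $E_2$-cogroupoid structure determines this formal group structure, as claimed.
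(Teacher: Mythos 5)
Your proposal is correct and follows essentially the same route the paper intends for this corollary: Proposition \ref{determination of structures} exhibits the pinch-map cogroupoid as the first column of $S^{1,\bullet,\bullet}$, mapping out of it produces the groupoid $\loopsX \rightrightarrows X$ with $X$ in degree zero, and Proposition \ref{loopsalreadycomplete} together with Theorem \ref{gaitsgorymain} upgrades this to a formal group(oid) over $X$. Two cosmetic imprecisions: the two cone points of $\Sigma S^0$ are homotopic but not \emph{canonically} so (there are two natural paths, one through each point of $S^0$; the paper sidesteps this by observing that the first column, having $\pt$ in degree zero, lifts to pointed spaces where $\pt$ is the zero object), and it is the two-fold delooping of $\loopsX$, not the single delooping, that recovers $X_{dR}$.
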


\noindent Now, let us fix again a derived scheme $X$, and turn our attention to the filtered loop space $\mathcal{L}_{fil}X$. Note that this is a constant degeneration over $\filstack$,  of the loop space $\mathcal{L}_{fil}$ to the vector bundle stack corresponding to the (shifted) Lie algebra $\mathbb{T}_{X}[-1]$. The 2-cogroupoid structure on $S^1_{fil}$, gives this the structure of an $E_2$-groupoid object in formal moduli problems over $X|_{\filstack}$. Over the generic point $\eta: \Spec k \to \filstack$, this recovers the group structure on $\mathcal{L} X$ and over $B \GG_m$, this recovers the abelian group structure on $ \mathbb{T}X[-1]$ arising from its structure as the (formal completion) of a linear stack.

\begin{rem}
Let $X = S^1$ equipped with its $E_1$-cogroupoid structure. The formality as above is equivalent to the question of whether or not $S^1_{fil}$ is a constant degeneration of $E_1$-cogroupoid objects over $\filstack$.  
\end{rem}

We already know that as an $E_2$-cogroupoid, the degeneration is nonconstant: 

\begin{prop}
The $E_2$-cogroupoid object ${S^{1,\bullet, \bullet}_{fil}}$ is not a constant degeneration over $\filstack$.

\end{prop}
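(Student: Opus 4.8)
The plan is to argue by contradiction, using the fact that the cogroup structures on the two fibers of $S^{1,\bullet,\bullet}_{fil}$ control the two a priori distinct group structures on the loop space, whose discrepancy is exactly the Todd class. The crucial observation is that constancy of the degeneration is an intrinsic property of $S^{1,\bullet,\bullet}_{fil}$, independent of any auxiliary scheme $X$, so it suffices to exhibit a single $X$ for which the consequences of constancy fail.

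First I would suppose, toward a contradiction, that $S^{1,\bullet,\bullet}_{fil}$ is a constant degeneration of $E_2$-cogroupoids; by definition this provides an equivalence $\eta^*(S^{1,\bullet,\bullet}_{fil}) \simeq \iota^*(S^{1,\bullet,\bullet}_{fil})$ of $E_2$-cogroupoid objects in affine stacks over $\filstack$. Restricting to the first column, Proposition \ref{determination of structures} shows that this descends to an equivalence of the underlying $E_1$-cogroupoids, i.e. of the pinch-map cogroup structures on the generic and central fibers of the filtered circle. Here the generic fiber $\eta^*(S^1_{fil}) \simeq B\GG_a$ carries the cogroup structure arising from the topological circle $S^1$, while the central fiber $\iota^*(S^1_{fil}) = S^1_{gr} = \Spec^{\Delta}(k \oplus k[-1])$ carries the graded (``formal'') one.

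Next I would fix a smooth proper scheme $X$ with non-trivial Todd class, e.g. $X = \PP^n$, and apply the mapping-stack functor $\Map_{\filstack}(-, X|_{\filstack})$. By Construction \ref{groupstructures} and the corollary following Proposition \ref{determination of structures}, this sends the cogroup structures on the two fibers to the two group structures on the underlying formal group $\loopsX \simeq \shiftedtangent$: the loop (nonabelian) group structure coming from $\eta^*$ and the additive one coming from $\iota^*$. A hypothetical equivalence of cogroupoids would then produce an equivalence of these two $E_1$-group objects over $X$. Invoking the formal-group machinery of Section \ref{section formal groups todd}, each group structure trivializes the relative tangent complex $\mathbb{T}_{\loopsX / X}$, and the determinant of the comparison automorphism $\gamma$ is the group-theoretic Todd class $\on{td}^{grp}(\loopsX)$, which by \cite{kondyrev2019equivariant} recovers the classical Todd class of $\mathbb{T}_X$. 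An equivalence of the two group structures forces $\gamma \simeq \id$ and hence $\on{td}^{grp}(\loopsX) = 1$, contradicting the non-triviality of the Todd class of $\PP^n$.

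The main obstacle — and the step demanding the most care — is the compatibility underlying the third paragraph: one must verify that the equivalence $\eta^*(S^1_{fil}) \simeq \iota^*(S^1_{fil})$, which on underlying stacks merely identifies two copies of $B\GG_a$, is carried by $\Map_{\filstack}(-, X|_{\filstack})$ to a comparison that respects the HKR identification $\loopsX \simeq \shiftedtangent$, so that the two group structures are genuinely being compared on the same object. This is precisely where non-formality enters: no self-equivalence of $B\GG_a$ intertwines the two cogroup structures once the interaction with $X$ is taken into account, and the obstruction to such an intertwiner is recorded faithfully by the Todd class.
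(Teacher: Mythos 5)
Your overall strategy---map into a well-chosen $X$ and derive a contradiction from constancy---matches the paper's, but the route diverges sharply after that, and the route you chose has a gap at exactly the step you flag in your last paragraph. The paper's own proof never touches the Todd class: it applies $\Map_{\filstack}(-,X|_{\filstack})$, invokes the main theorem to identify the two-fold delooping of the resulting $E_2$-formal groupoid with $X_{Hod}$, and observes that a constant degeneration of $S^{1,\bullet,\bullet}_{fil}$ would force $X_{Hod}$ to be a constant degeneration, i.e.\ $X_{dR}\simeq X_{Dol}$, which fails for essentially any smooth proper $X$ (already at the level of $\QCoh$: $\mathcal{D}$-modules versus Higgs sheaves). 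This is a genuinely shorter argument because it uses the full $E_2$-structure and the delooping theorem, rather than descending to the $E_1$ column.

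The gap in your version is the implication ``the two group structures on $\shiftedtangent$ are equivalent as group objects $\Rightarrow$ the Todd class is trivial.'' That implication is false as stated: the loop-group structure and the additive structure on $\shiftedtangent$ \emph{are} abstractly equivalent as formal groups over $X$ --- this is exactly the exponential equivalence $\exp\colon \mathbb{V}(\mathfrak{g})\to\formalgroup$ recalled in the paper's review of \cite{kondyrev2019equivariant} --- and yet the Todd class is nontrivial. The class $\on{td}^{grp}$ is not an obstruction to the existence of an equivalence of group objects; it is the determinant of $d\exp_{\formalgroup}$, i.e.\ it measures the failure of a \emph{specific} identification of underlying objects (the HKR/identity one) to intertwine the two structures. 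So from a hypothetical constant degeneration of cogroupoids you only get \emph{some} equivalence of the two group structures, which you already have unconditionally, and no contradiction follows without more work. Closing the gap requires the naturality-in-$X$ argument that the paper reserves for the later $E_1$-level theorem (the non-formality of the pinch map): one must show that a constant degeneration of cogroupoids would produce, functorially in $X$, a compatible system of intertwiners whose existence forces $X\mapsto\on{td}(X)$ to be trivial. You correctly identify this as ``the step demanding the most care,'' but the final paragraph asserts rather than proves it, and as written the assertion is circular --- it presupposes that the Todd class is the obstruction to the intertwiner, which is the point at issue. If you want a complete proof of this particular proposition, the $X_{dR}\not\simeq X_{Dol}$ argument is both rigorous and nearly immediate given the main theorem.
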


\begin{proof}
We apply mapping spaces into a derived scheme $X$. This gives rise to an $E_2$-(formal)-groupoid over $X|_{\filstack}$. By the main theorem, $B^{(2)} \mathcal{L}_{fil}(X) \simeq X_{Hod}$. This obviously is not a constant degeneration of formal moduli problems  since 
$$
X_{dR} \not\simeq X_{Dol}
$$
Hence we may conclude that ${S^{1,\bullet, \bullet}_{fil}}$ is not formal as an $E_2$-cogroupoid object.
\end{proof}

In fact, the failure of $\mathcal{L}_{fil}(X)$ to be a constant degeneration of groups,i.e. the failure of the formality of the pinch map $S^1 \to S^1 \vee S^1$ gives rise functorially to the  existence of  non-trivial Todd classes for smooth and proper schemes. 

\begin{thm}
The existence of non-trivial Todd classes implies that the cogroupoid  (equivalently pointed cogroup) structure on $S^1$ corresponding to the pinch map is not formal.  
\end{thm}

\begin{proof}
The $E_1$-cogroupoid ${S^{1, \bullet}_{fil}}$ provides a degeneration of pointed $E_1$-cogroup objects over $\filstack$; this specializes to the cogroup $(S^1_{fil})^u$ over the generic fiber and to $S^1_{gr}$ over the special fiber. Note that one recovers the filtered circle in the degree 1 stage. Recall that in characteristic zero at the level of stacks, this is a constant degeneration, so we must verify that the cogroupoid structures are themselves different. 

Before doing so, we remark  that the cogroup structure on $(S^1_{fil})^u  = S^1_{fil} \times_{\filstack} \Spec k$ is indeed controlled by the cogroupoid (equivalently pointed cogroup) structure on the topological space $S^1$. The constant stack functor
$$
\mathcal{S}_* \to  \on{Stk}_k  
$$
being a left adjoint, preserves colimits, and thus the associated cosimplicial object in  $\on{Stk}_k$ is also an $E_1$-cogroup. By applying affinization, we obtain a cogroup object in \emph{affine stacks}; however, the affinization morphism $S^1 \to \on{Aff}(S^1)$ is easily seen to be a morphism of cogroups so the ``unipotent" loop space $\Map( (S^1_{fil})^{u}, X)$ (which recovers $\mathcal{L}(X)$) inherits the same $E_1$ group structure over $X$ as $\mathcal{L}(X)$

Let us fix $X$ a derived scheme. The HKR theorem in characteristic zero gives rise to an equivalence 

$$
 \mathcal{L}(X) \simeq \Map((S^1_{fil})^u, X) \simeq  \Map((S^1_{gr})^u, X) \simeq \mathcal{T}X[-1]
$$
of derived schemes over $X$. As we remarked earlier, these are in fact formally complete along $X$, so that they may be viewed as formal moduli problems over $X$. These sit in the degree $1$ piece of the simplicial object in formal moduli problems over $X$ classifying the loop group and abelian group structures on $\mathcal{T}X[-1]$ respectively. These are given by  $\mathcal{L}_{gr}(X)^{\bullet, \bullet}$ and $\mathcal{L}(X)^{\bullet, \bullet}$. 

The claim is that the Todd class measures the difference between these two group structures, in a functorial way. In particular  one may express the assignment 
$$
X \to \on{td}(X)
$$
as the image of $\mathcal{L}^{\bullet, \bullet}_{fil}(X)$, with its group structure induced by mapping objects from the cogroup structure on $S^{1,\bullet}_{fil}$, under the following composition (of core $\infty$-groupoids): 
\begin{align*}
  & \on{Grp}( \filteredloopspaceX)\rightarrow \on{Grp}(\mathcal{T}_X[-1]) \times \on{Grp}(\mathcal{T}_X[-1])  \\     \to& \on{Iso}_{\QCoh(\mathcal{T}_X[-1])}(\mathbb{T}_{\mathcal{T}_X[-1]/X}, \pi^*(\mathbb{T}_X[-1])) \times  \on{Iso}_{\QCoh(\mathcal{T}_X[-1])}(\mathbb{T}_{\mathcal{T}_X[-1]/X}, \pi^*(\mathbb{T}_X[-1])) \\
 \to & \on{Aut}_{\QCoh(\mathcal{T}_X[-1])}(\mathbb{T}_{\mathcal{T}_X[-1]/X}) \xrightarrow{\mathbf{det}} \OO_{\mathcal{T}_X[-1]}^{\times}
\end{align*}
Here the first map is induced by simultaneously pulling back the group structure to the generic and special fiber to get two different group stuctures on $\mathcal{T}_X[-1]$ (since we are working in characteristic zero, both the generic and special fibers are equivalent to $\mathcal{T}_X[-1]$). The second arrow is the assignment, to each of these group structures, of a trivialization of the relative bundle. As described in \cite[Construction 3.3.1]{kondyrev2019equivariant}, this assignment is canonical, hence the functoriality of the second arrow. The third arrow is just the composition of the two equivalences which gives an automorphism of the relative tangent bundle. 

Now, let $X$ be in particular smooth and proper. As we reviewed above in Section \ref{section formal groups todd}, the group theoretical Todd class agrees with the classical Todd class. Thus, the 1-cogroupoid $S^{1, \bullet}_{fil}$  gives rise to a degeneration of groupoid objects, natural in $X$, with special fiber the abelian group structure and generic fiber given by the loop group. The fact that there exists some smooth and proper scheme $X$ for which $\on{td}(X) \neq 1$, implies that this is not a constant degeneration.
Thus, the cogroup structures on $S^1$ and $S^1_{gr}$ are themselves not equivalent.
We may conclude from all this that the $E_1$-cogroup in pointed spaces $S^1$ is not formal.
\end{proof}

\section{Consequences for Hochschild cohomology} \label{section Hochschild cohomology}
We would like to end with a few further remarks on the $E_2$-cogroupoid structure on $S^1_{fil}$ together with consequences at the level of Hochschild \emph{cohomology}.

This is presumably well known to experts, but we believe it is worthwhile explicitly relating the $E_2$-algebra structure on Hochschild cohomology along with its compatible HKR filtration to the cogroupoid objects in stacks presented here. 

\begin{defn}
Let $X$ be a derived scheme. The Hochschild cohomology sheaf is defined to be 
$$
\mathcal{HH}^*(X) = {p_1}_* \underline{End}_{\OO_X \otimes \OO_X}(\OO_X) 
$$
This acquires an $\OO_{X}$-linear  $E_1$-algebra structure. Furthermore, by e.g. \cite[Section 1.4]{markarian2009atiyah}, there exists a perfect pairing 
$$
\mathcal{HH}^*(X) \otimes_{\OO_X} \OO_{\loopspaceX} \to \OO_X,
$$
displaying $\mathcal{HH}^*(X)$ as the $\OO_X$-linear dual of $\OO_{\loopspaceX}$. 
\end{defn}

\begin{prop}
The $E_2$-cogroupoid structure on $S^1$ (resp. $S^1_{fil}$) endows $\on{HH}^*(X)$ with the structure of an $E_2$-algebra in $\Mod_k$, resp $\on{Fil}(\Mod_k)$.  
\end{prop}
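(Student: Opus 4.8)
The plan is to transport the $E_2$-cogroupoid structure on $S^1_{fil}$ through the contravariant functor $\Map_{\filstack}(-, X|_{\filstack})$ and then $\OO_X$-linearly dualize. First I would invoke Proposition~\ref{groupoidalnature}: forming levelwise mapping stacks out of the bi-cosimplicial object $S^{1,\bullet,\bullet}_{fil}$ into $X|_{\filstack}$ produces the $E_2$-groupoid $\mathcal{L}^{\bullet,\bullet}_{fil}(X)$ in derived stacks over $X|_{\filstack}$. Since the bidegree $(0,0)$ piece $S^{1,0,0}_{fil}$ is the final object $\filstack$, its mapping stack is $X|_{\filstack}$ itself, so $\mathcal{L}^{\bullet,\bullet}_{fil}(X)$ is in fact an $E_2$-\emph{group} object over $X|_{\filstack}$; concretely the horizontal and vertical cosimplicial directions of $S^{1,\bullet,\bullet}_{fil}$ furnish two compatible multiplications $m_h, m_v \colon \mathcal{L}_{fil}X \times_{X|_{\filstack}} \mathcal{L}_{fil}X \to \mathcal{L}_{fil}X$ whose Eckmann--Hilton interchange is precisely the $E_2$-structure. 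The unfiltered statement is the special case obtained by pulling back along $\eta$, where $S^{1,\bullet,\bullet}_{fil}$ restricts to the topological $E_2$-cogroupoid built from $S^1$ and $\mathcal{L}_{fil}X$ restricts to $\loopspaceX$.

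Next I would apply relative global sections $\OO(-)$ over $X|_{\filstack}$, working internally to $\on{Fil}(\Mod_k) \simeq \QCoh(\filstack)$. Each group multiplication induces on $\OO_{\mathcal{L}_{fil}X}$ a comultiplication $\delta \colon \OO_{\mathcal{L}_{fil}X} \to \OO_{\mathcal{L}_{fil}X} \otimes_{\OO_{X|_{\filstack}}} \OO_{\mathcal{L}_{fil}X}$, so that the underlying $\OO_{X|_{\filstack}}$-module of $\OO_{\mathcal{L}_{fil}X}$ acquires an $E_2$-coalgebra structure. I would then invoke the perfect pairing recorded in the Definition above, which exhibits $\on{HH}^*(X)$ as the $\OO_X$-linear dual of $\OO_{\loopspaceX}$; carried out over $\filstack$ it gives $\on{HH}^*_{fil}(X) \simeq \OO_{\mathcal{L}_{fil}X}^{\vee}$. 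Dualizing an $E_2$-coalgebra over $\OO_X$ produces an $E_2$-algebra, so $\on{HH}^*_{fil}(X)$ inherits an $E_2$-algebra structure in filtered $k$-modules, and pulling back along the generic fiber $\eta$ recovers the ordinary $E_2$-algebra $\on{HH}^*(X)$ together with its (dual HKR) filtration.

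The main obstacle is the dualization step. One must ensure $\OO_{\mathcal{L}_{fil}X}$ is dualizable as an $\OO_{X|_{\filstack}}$-module, so that $\delta^{\vee}$ genuinely defines a multiplication and all the $E_2$-coherence passes to the dual; for $X$ smooth this follows from the HKR identification $\OO_{\loopspaceX} \simeq \Sym(\Omega^1_{X}[1])$, a perfect $\OO_X$-complex, together with its filtered refinement over $\filstack$. The second point of care is to check that the structure obtained this way agrees with the classical Gerstenhaber--Deligne $E_2$-structure on Hochschild cohomology, i.e.\ that the comultiplication dual to cup product is the one coming from the pinch map. Here I would appeal to the convolution/center description used in the preceding $S^0_{fil}$ proposition and in \cite{ben2012loop}: the convolution monoidal structure on $\QCoh(\loopspaceX)$ has $\End$ of its unit equal to $\on{HH}^*(X)$, and centers of monoidal $\infty$-categories are canonically $E_2$, which pins down the $E_2$-structure uniquely and identifies it with the one transported from $S^1_{fil}$.
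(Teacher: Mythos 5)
Your proposal is correct and follows essentially the same route as the paper's proof: form mapping stacks out of the $E_2$-cogroupoid to get the $E_2$-groupoid structure on the loop space over $X$, observe that this endows $\OO_{\loopspaceX}$ with an $E_2$-coalgebra(oid) structure over $\OO_X$, and pass to $\OO_X$-linear duals to obtain the $E_2$-algebra structure on $\on{HH}^*(X)$, with everything carried out over $\filstack$ for the filtered statement. Your added remarks on dualizability and on matching the classical $E_2$-structure are reasonable elaborations but do not change the argument, which in the paper concludes by simply forgetting the $(\OO_X \otimes_k \OO_X)$-linear interchange homotopy down to a $k$-linear one.
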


\begin{proof}
 We will prove this in the unfiltered case, but all the claims go through mutatis mutandis in the filtered setting. Let us focus our attention of the structure sheaf of $\mathcal{L}X$; the induced $E_2$-groupoid structure over $X$ will endow $\OO_{\loopspaceX}$ with an $E_2$-coalgebroid structure over $\OO_X$. If we take $\OO_X$-linear duals, this gives an $E_2$-algebroid strucure on  $\OO_{\mathcal{L}X}^\vee$. This bisimplicial object encodes a $\OO_X$-linear multiplication, and a $(\OO_X \otimes_{k} \OO_X)$-linear homotopy  corresponding to the $E_2$-commutativity. We may forget this to a $k$-linear homotopy; this gives $\on{HH}^*(X)$ the structure of an $E_2$-algebra in  $\Mod_k$.  
\end{proof}

\noindent As it turns out, the standard $E_2$-algebra structure on  (filtered) Hochschild cohomology may be recovered from the $E_1$ cogroupoid  $S^{0, \bullet}_{fil}$:

\begin{rem}
There exists an equivalence
$$
\QCoh(X \times X) \simeq \on{Fun}_{k}(\QCoh(X), \QCoh(X)) 
$$
The left hand side inherits a monoidal structure from the right, by composition. This is often referred to as the convolution  monoidal structure.
\end{rem}

\begin{prop}
The  monoidal structure on $\QCoh(X \times X)$ is induced by the $E_1$-cogroupoid structure on $S^{0, \bullet}$.
\end{prop}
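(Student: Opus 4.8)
The plan is to reduce the statement to the standard fact that a groupoid object, pushed through the symmetric monoidal functor $\QCoh(-)$, manufactures a convolution monoidal structure on quasi-coherent sheaves of its object of morphisms, and then to match this convolution with functor composition. First I would compute the groupoid $\Map_{\filstack}(S^{0, \bullet}, X|_{\filstack})$ explicitly (working over the generic fiber, so unfiltered, suffices for the stated conclusion). By Construction \ref{conerveconstruction} the cogroupoid $S^{0, \bullet}$ is the conerve of $\emptyset \to \pt$, whose $n$-th term is the disjoint union of $n+1$ copies of $\pt$. Mapping out of it into $X$ turns coproducts into products and, by the push-out--to--pullback argument already used in the proof of Proposition \ref{groupoidalnature}, turns the coface/codegeneracy structure into that of the \emph{pair groupoid}: in degree $n$ one obtains $X^{\times (n+1)}$, so that $\Map(S^{0, \bullet}, X)$ is the nerve of $X \times X \rightrightarrows X$ with its pair-groupoid composition $(x,y)\cdot(y,z) = (x,z)$.

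Next I would invoke the general formalism of \cite{lurie2017higher}: applying $\QCoh(-)$ to this groupoid object, together with the Künneth equivalence $\QCoh(X \times X) \simeq \QCoh(X) \otimes \QCoh(X)$ and base change for the relevant pushforwards, assembles the Segal/groupoid data into a monoidal structure on $\QCoh$ of the object of morphisms $\QCoh(X \times X)$. Concretely, writing the three face maps of the pair groupoid in degree $2$ as the projections
$$
p_{12},\ p_{13},\ p_{23}: X \times X \times X \to X \times X,
$$
where $p_{13}$ is the composition map, the induced monoidal product is the convolution
$$
\cF \star \cG := (p_{13})_{*}\bigl(p_{12}^{*}\cF \otimes p_{23}^{*}\cG\bigr).
$$
The unit is supplied by the degeneracy $X \to X \times X$ (the diagonal), i.e. by $\Delta_{*}\OO_X$, and the higher coherences are exactly the Segal-condition identities for the groupoid nerve.

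Finally I would identify this convolution with the composition monoidal structure. Under the integral-transform equivalence $\QCoh(X \times X) \simeq \on{Fun}_k(\QCoh(X), \QCoh(X))$ a kernel $\cF$ acts by $M \mapsto (q_2)_{*}(q_1^{*}M \otimes \cF)$, and the convolution formula above is precisely the kernel computing the composite of two such transforms (up to the usual reindexing of factors), with $\Delta_{*}\OO_X$ corresponding to the identity functor. This exhibits the convolution structure induced by $S^{0, \bullet}$ as the composition (convolution) monoidal structure, proving the proposition; it is the generic-fiber shadow of the filtered statement built from $S^{0,\bullet}_{fil}$ in (\ref{svenchen}).

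The main obstacle is the passage in the second paragraph: rigorously upgrading the groupoid object to a genuine monoidal $\infty$-category, i.e. producing all the higher associativity and unit coherences of $\star$ rather than just the product on objects. This is where the cogroupoid (Segal) conditions on $S^{0,\bullet}$ are essential, and where one must organize $\QCoh$ as a (symmetric) monoidal functor compatible with the pull--push correspondences so that base change delivers the associator; I would handle it by realizing the whole construction as $\QCoh$ applied to the Segal object $\Map(S^{0,\bullet}, X)$ regarded as an algebra object in correspondences, rather than verifying coherences by hand.
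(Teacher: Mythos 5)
Your proposal is correct and follows essentially the same route as the paper: both obtain the monoidal structure by applying $\QCoh(-)$, viewed as a symmetric monoidal functor out of the $(\infty,2)$-category of correspondences, to the groupoid $\Map(S^{0,\bullet}, X)$, and then match the result with the composition structure on $\on{Fun}_k(\QCoh(X),\QCoh(X))$ via the integral-transform equivalence. You are somewhat more explicit in identifying the groupoid as the pair groupoid and writing out the convolution kernel $(p_{13})_*(p_{12}^*\cF \otimes p_{23}^*\cG)$, but the coherence issue you flag is resolved exactly as you suggest in your final paragraph, which is the paper's argument.
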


\begin{proof}
Following \cite[Section 5.2]{gaitsgory2019study} the assignment  $\EuScript{X} \mapsto \QCoh(\EuScript{X})$ can be expressed as a symmetric monoidal functor from a certain $(\infty, 2)$-category of correspondences (in stacks) to the $(\infty,2)$-category of $k$-linear stable $\infty$-categories. 
As a consequence, for any groupoid object $\EuScript{M}_\bullet$, one obtains a monoidal structure on $\QCoh(\EuScript{M}_1)$.  
This discussion now applies to the $E_1$-groupoid $\Map(S^{0, \bullet}, X)$, which inherits the groupoid structure from the cogroupoid $S^0$. The degree 1 piece is
$$
\Map(S^0, X) \simeq X \times X
$$
so we conclude that $\QCoh(X \times X)$ admits an $E_1$-algebra structure. The equivalence
$$
\QCoh(X \times X) \simeq \on{Fun}_{k}(\QCoh(X), \QCoh(X)) 
$$
may now be upgraded  to an equivalence of monoidal $\infty$-categories by \cite[Remark 4.11]{ben2010integral}. 
\end{proof}

\begin{rem}
It is well known that one can now recover Hochschild cohomology of $X$,  as the endomorphisms of the unit with respect to this monoidal structure:
$$
\on{HH}^*(X) := \End_{\QCoh(X \times X)^{\otimes}}(\mathbf{1}),
$$
the unit here being the image of $\OO_X$ under pushforward along the diagonal $\Delta: X \times X$. In fact, as this is endomorphisms of the unit in an $E_1$-monoidal $\infty$-category, this is naturally an $E_2$-algebra, thereby recovering the well known fact that $\on{HH}^*(X)$ admits an $E_2$-algebra structure. 

\end{rem}
In fact the above argument applies verbatim to give a monoidal structure on 
$$
\QCoh(\Map_{\filstack}(S^0_{fil}, X|_{\filstack}))
$$
which specializes, upon pullback along $\Spec k \to \filstack$, to that of $\QCoh(X \times X)$. 

\begin{cor} \label{duven}
There exists a filtration on Hochschild cohomology compatible with its $E_2$-algebra structure, with associated graded the $E_\infty$-algebra $\Sym(\mathbb{T}_{X}[-1])$ 
\end{cor}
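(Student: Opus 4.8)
The plan is to take the required filtration to be the filtered $E_2$-algebra $\on{HH}^*_{fil}(X)$ produced by the preceding proposition, regarded as an object of $\on{Fil}(\Mod_k)\simeq\QCoh(\filstack)$, and then to identify its associated graded. Compatibility of the filtration with the $E_2$-algebra structure is essentially built in: by that proposition $\on{HH}^*_{fil}(X)$ is the endomorphism algebra of the unit in the convolution-monoidal category $\QCoh(\Map_{\filstack}(S^0_{fil},X|_{\filstack}))$, hence an $E_2$-algebra in $\on{Fil}(\Mod_k)$, and pulling back along the generic point $\eta$ --- equivalently applying the symmetric monoidal functor $\on{Und}$ --- recovers $\on{HH}^*(X)$ together with its usual $E_2$-structure. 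Thus nothing further is needed for the compatibility clause, and the real content of the corollary is the computation of the associated graded.

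For the associated graded I would apply the symmetric monoidal functor $\gr\simeq\iota^*$. Being symmetric monoidal it carries the $E_2$-algebra $\on{HH}^*_{fil}(X)$ to an $E_2$-algebra in $\on{Gr}(\Mod_k)$, and it commutes with the formation of $\End(\mathbf 1)$ because $\iota^*$ may be applied to the entire convolution-monoidal structure, exactly as $\eta^*$ was used to extract the generic fibre. This exhibits the associated graded as $\End(\mathbf 1)$ in $\QCoh$ of the central fibre $\Map_{B\GG_m}(S^0_{fil}|_{B\GG_m},X|_{B\GG_m})$. Using the remark that $S^0_{fil}|_{B\GG_m}\simeq\Spec(k[\epsilon]/(\epsilon^2))$ with $\epsilon$ in weight $-1$, this central fibre is the shifted tangent bundle $\shiftedtangent$, equipped with the convolution structure induced by its \emph{abelian} (linear) group structure over $X$.

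To finish I would identify this graded $\End(\mathbf 1)$ with $\Sym(\mathbb{T}_X[-1])$. The cleanest route uses the perfect $\OO_X$-linear duality between $\on{HH}^*$ and $\OO_{\loopspaceX}$ recorded above, in its filtered form pairing $\on{HH}^*_{fil}(X)$ against $\OO_{\mathcal{L}_{fil}X}$; the associated graded of the latter is $\Sym(\mathbb{L}_X[1])$ by the HKR theorem of \cite{moulinos2019universal}, so dualizing over $\OO_X$ yields $\Sym(\mathbb{L}_X[1])^{\vee}$. In characteristic zero this dual is $\Sym(\mathbb{T}_X[-1])$ --- precisely the identity $\Sym(\mathbb{L}_X)^{\vee}\simeq\Sym(\mathbb{T}_X)$ already used in the proof of the proposition computing $\QCoh(X_{Dol})$. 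Finally, because the central-fibre group structure is abelian and the corresponding cogroup is cocommutative, the convolution structure on the central fibre is in fact symmetric monoidal, so $\End(\mathbf 1)$ is naturally $E_\infty$ and the $E_2$-structure inherited through $\gr$ refines to the standard commutative structure on $\Sym(\mathbb{T}_X[-1])$.

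\textbf{Main obstacle.} The delicate step is this last refinement: verifying that the $E_2$-algebra obtained via $\gr$ genuinely agrees with the commutative $E_\infty$-algebra $\Sym(\mathbb{T}_X[-1])$, rather than some a priori exotic $E_2$-deformation of it. This forces one to check that over $B\GG_m$ the convolution-monoidal structure is symmetric --- equivalently that the degeneration of the cogroupoid to the special fibre is cocommutative --- and that the characteristic-zero duality identifying $\Sym(\mathbb{L}_X[1])^{\vee}$ with $\Sym(\mathbb{T}_X[-1])$ is multiplicative; both of these are where working over $\mathbb{Q}$ is genuinely used.
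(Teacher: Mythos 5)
Your overall strategy matches the paper's: both take the filtration to be $\on{HH}^*_{fil}(X)=\End_{\QCoh(\Map_{\filstack}(S^0_{fil},X|_{\filstack}))}(\mathbf 1)$ from the preceding proposition, observe that the $E_2$-compatibility is automatic from endomorphisms of the unit in a $\QCoh(\filstack)$-linear monoidal category, and reduce the corollary to identifying the central fibre. Where you diverge is in that last identification. The paper computes directly on the central fibre: it identifies $\Map(\Spec(k[\epsilon]/(\epsilon^2)),X)\simeq \Spec\Sym(\mathbb{L}_X)\simeq\mathcal{T}X$ and then invokes Koszul duality for $\End(\mathbf 1)$ over $\Sym(\mathbb{L}_X)$ to land on $\Sym(\mathbb{T}_X[-1])$. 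You instead dualize the filtered HKR identification of $\OO_{\mathcal{L}_{fil}X}$, whose associated graded is $\Sym(\mathbb{L}_X[1])$, and take $\OO_X$-linear duals. These are two faces of the same duality, but your route has the advantage of importing the multiplicativity question into a statement already established in \cite{moulinos2019universal}, and your flagged concern about whether the $E_2$-structure obtained through $\gr$ refines to the commutative one on $\Sym(\mathbb{T}_X[-1])$ is a genuine point that the paper's proof passes over in silence.

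One slip to correct: the central fibre of $\Map_{\filstack}(S^0_{fil},X|_{\filstack})$ is the \emph{unshifted} tangent bundle $\mathcal{T}X=\Spec_X\Sym(\mathbb{L}_X)$, not $\shiftedtangent$; the shift by $[-1]$ only appears after taking $\End(\mathbf 1)$ of the unit $0_*\OO_X$ (equivalently after Koszul duality). The object $\shiftedtangent$ is the central fibre of the filtered \emph{loop} space $\Map_{\filstack}(S^1_{fil},X|_{\filstack})$, i.e.\ of the mapping stack out of $S^1_{fil}$, not out of $S^0_{fil}$. Your final answer is unaffected because your actual computation runs through the pairing with $\OO_{\mathcal{L}_{fil}X}$ rather than through $\End(\mathbf 1)$ on the misidentified fibre, but had you computed Koszul duality on $\Spec_X\Sym(\mathbb{L}_X[1])$ you would have produced $\Sym(\mathbb{T}_X[-2])$, the wrong shift; so the identification of the fibre should be fixed before the argument is written up.
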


\begin{proof}
By the discussion in the proof of Proposition 8.4, the $\infty$-category $\QCoh(\Map(S_{Fil}^0,  X|_{\filstack}))$ acquires a monoidal structure. Moreover, this monoidal structure is linear over the base, in that it is $\QCoh(\filstack)$-linear. Thus one may define the enriched (over $\QCoh(\filstack) \simeq \on{Fil}(\Mod_R)$) endomorphism object 
$$
\on{HH}^*_{fil}(X) :=  \End_{\QCoh(\Map_{\filstack}(S^0_{fil}, X|_{\filstack}))}(\mathbf{1}) \in \QCoh(\filstack) \simeq \on{Fil}(\Mod_R)
$$
As endomorphisms of the unit in an $E_n$-monoidal category acquire an $E_{n+1}$ monoidal structure, this gives rise to the $E_2$-monoidal structure on $\on{HH}_{fil}^*(X)$. This base changes to $\on{HH}^*(X) \simeq \End_{\QCoh(X \times X)^{\otimes}}(\mathbf{1})$  upon passing to the generic fiber, thus recovering the $E_2$-monoidal structure on Hochschild cohomology.  

Let us now pass to the central fiber. By Koszul duality, together with the fact that 
$$
\Map(\Spec(k[ \epsilon]/ (\epsilon^2), X) \simeq \Spec \Sym(\mathbb{L}_X)  \simeq \mathcal{T}_X,
$$
one obtains an equivalence 
$$
\End(\mathbf{1}) \simeq \Sym^*(\mathbb{T}_{X}[-1]) 
$$
\end{proof}

\begin{rem}
We remark that we have now exhibited two $E_2$-algebra structures on Hochschild cohomology (and its filtered variant); one defined via the $E_2$ cogroupoid $S^{1, \bullet, \bullet}$ and the standard one defined by composition of natural transformations, now seen to be induced by the $E_1$-cogroupoid $S^{0, \bullet}$.  We claim that they are equivalent, but we do not include an argument here.  
\end{rem}

\begin{rem}
The same argument as that of Corollary \ref{duven} goes through to show that $E_n$-Hochschild cohomology,
$$
\on{HH}_{E_n}^*(X) :=  \End_{\QCoh(\mathcal{L}^{(n-1)}(X))}(\mathbf{1}) 
$$
obtains an $E_{n+1}$ algebra structure. For example, $\QCoh(\mathcal{L}(X))$ will obtain an $E_2$-monoidal structure, with unit given by the pushforward $e_*(\OO_X)$ along the constant morphism
$$
e: X \to \mathcal{L}(X)
$$
Hence we recover an $E_3$-algebra structure on $\on{HH}^*_{E_2}(X)$.

We remark that an alternative but closely related explanation for the $E_{n+1}$ structure on $n$-iterated Hochschild cohomology may be found in \cite{toenbranes}. 
\end{rem}


\bibliographystyle{amsalpha}
\bibliography{henven}
\end{document}